\documentclass[12pt]{article}
\textwidth 16.5cm
\textheight 22.5cm
\oddsidemargin 0pt
\topmargin -1cm

\RequirePackage[OT1]{fontenc}
\RequirePackage{mathrsfs,amsfonts,amsthm,amsmath,amssymb,subcaption,graphicx,epstopdf,enumerate,lmodern}
\RequirePackage[round,colon,authoryear]{natbib}
\RequirePackage[colorlinks,citecolor=blue,urlcolor=blue]{hyperref}
\usepackage{bm,color,fancyvrb,xcolor,mathtools}
\usepackage{amscd}
\usepackage{bbm}

\def\eps{\varepsilon}
\parindent=8mm
\frenchspacing
\font\tencmmib=cmmib10 \skewchar\tencmmib '60
\newfam\cmmibfam
\textfont\cmmibfam=\tencmmib


\def\lessim{\ \lower4pt\hbox{$
		\buildrel{\displaystyle <}\over\sim$}\ }
\def\gessim{\ \lower4pt\hbox{$\buildrel{\displaystyle >}
		\over\sim$}\ }

\usepackage{algorithm,algorithmicx,algpseudocode}
\usepackage{tensor}

\RequirePackage[OT1]{fontenc}
\RequirePackage{mathrsfs,amsfonts,amsmath,amssymb,subcaption,graphicx,epstopdf,enumerate,lmodern}
\usepackage{bm,color,fancyvrb,xcolor,mathtools}
\usepackage{bbm}
\def\eps{\varepsilon}
\def\lessim{\ \lower4pt\hbox{$
		\buildrel{\displaystyle <}\over\sim$}\ }
\def\gessim{\ \lower4pt\hbox{$\buildrel{\displaystyle >}
		\over\sim$}\ }

\usepackage{algorithm,algorithmicx,algpseudocode}
\usepackage{tensor}

\newtheorem{theorem}{Theorem}[section]
\newtheorem{proposition}[theorem]{Proposition}
\newtheorem{lemma}{Lemma}

\newtheorem{corollary}[theorem]{Corollary}

\DeclarePairedDelimiter{\norm}{\lVert}{\rVert}
\DeclarePairedDelimiter{\abs}{\lvert}{\rvert}

%

\providecommand{\abs}[1]{\left\lvert#1\right\rvert}
\providecommand{\norm}[1]{\left\lVert#1\right\rVert}
\renewcommand{\hat}{\widehat}

\renewcommand{\hat}{\widehat}

\providecommand{\abs}[1]{\left\lvert#1\right\rvert}
\providecommand{\norm}[1]{\left\lVert#1\right\rVert}
\renewcommand{\hat}{\widehat}

\renewcommand{\hat}{\widehat}

\newcommand{\bfm}[1]{\ensuremath{\mathbf{#1}}}

\def\ba{\bfm a}   \def\bA{\bfm A}  
\def\bb{\bfm b}     

\def\bc{\bfm c}     
   \def\bD{\bfm D}  
\def\be{\bfm e}     \def\EE{\mathbb{E}}

   \def\bI{\bfm I}

   \def\bM{\bfm M}

   \def\bP{\bfm P}  
     
   \def\bR{\bfm R}  \def\RR{\mathbb{R}}

\def\bu{\bfm u}   \def\bU{\bfm U}  
\def\bv{\bfm v}   \def\bV{\bfm V}  
\def\bw{\bfm w}     
\def\bx{\bfm x}

\def\calK{{\cal  K}} 
\def\calL{{\cal  L}} 
\def\calM{{\cal  M}}

\def\calP{{\cal  P}} 
\def\calQ{{\cal  Q}} 
 
\def\calS{{\cal  S}} 
 
\def\calU{{\cal  U}}




\def\scrT{\mathscr{T}}
\def\scrA{\mathscr{A}}

\def\scrE{\mathscr{E}}
\def\scrX{\mathscr{X}}
\def\scrU{\mathscr{U}}

\def\bfe{\mathbf{e}}

\DeclareMathOperator{\argmin}{argmin}

\def\eps{\varepsilon}

\def\newpage{\vfill\eject}

\usepackage{tikz}
\usetikzlibrary{decorations.pathreplacing,calc}

\makeatletter
\newcommand\footnoteref[1]{\protected@xdef\@thefnmark{\ref{#1}}\@footnotemark}
\newcommand\numberthis{\addtocounter{equation}{1}\tag{\theequation}}
\makeatother

\begin{document}
\title{Perturbation Bounds for (Nearly) Orthogonally Decomposable Tensors$^\ast$}
\author{Arnab Auddy and Ming Yuan$^{\dag}$\\
	Columbia University}
\date{(\today)}

\maketitle

\footnotetext[1]{
	This research was supported by NSF Grant DMS-2015285. Part of the work was done while the second author was visiting the Institute for Theoretical Studies at ETH Z\"urich, Switzerland, and he wishes to thank the institute for their hospitality.}
\footnotetext[2]{
	Address for Correspondence: Department of Statistics, Columbia University, 1255 Amsterdam Avenue, New York, NY 10027.}
\begin{abstract}
	We develop deterministic perturbation bounds for singular values and vectors of orthogonally decomposable tensors, in a spirit similar to classical results for matrices such as those due to Weyl, Davis, Kahan and Wedin. Our bounds demonstrate intriguing differences between matrices and higher-order tensors. Most notably, they indicate that for higher-order tensors perturbation affects each essential singular value/vector in isolation, and its effect on an essential singular vector does not depend on the multiplicity of its corresponding singular value or its distance from other singular values. Our results can be readily applied and provide a unified treatment to many different problems in statistics and machine learning involving spectral learning of higher-order orthogonally decomposable tensors. In particular, we illustrate the implications of our bounds in the context of high dimensional tensor SVD problem, and how it can be used to derive optimal rates of convergence for spectral learning.
\end{abstract}
\newpage

\section{Introduction}
Singular value decomposition (SVD) is routinely performed to process data organized in the form of matrices, thanks to its optimality for low-rank approximation, and relationship with principal component analysis; and perturbation analysis of SVD plays a central role in studying the performance of these procedures. More and more often, however, multidimensional data in the form of higher-order tensors arise in applications. While higher-order tensors provide us a more versatile tool to encode complex relationships among variables, how to perform decompositions similar to SVD and how these decompositions behave under perturbation are often the most fundamental issues in these applications. In general, decomposition of higher-order tensors is rather delicate and poses both conceptual and computational challenges. See \cite{kolda2009tensor, cichocki2015tensor} for recent surveys of some of the difficulties as well as existing techniques and algorithms to tackle them. In particular, we shall focus here on a class of tensors that allows for direct generalization of SVD. The so-called orthogonally decomposable (odeco) tensors have been previously studied by \cite{kolda2001ortho, chen2009tensor, robeva2016orthogonal, belkin2018eigenvectors} among others, and commonly used in high dimensional data analysis \citep{anand2014tensor, anand2014sample, anandkumar2014guaranteed, liu2017characterizing}. The main goal of this work is to study the effect of perturbation on the singular values and vectors of an odeco tensor or odeco approximations of a nearly odeco tensor, and demonstrate how it could provide a powerful and unifying treatment to many different problems in high dimensional data analysis.

More specifically, an orthogonally decomposable tensor $\scrT\in {\mathbb R}^{d\times\cdots\times d}$ can be written as
\begin{equation}
	\label{eq:odec1}
	\scrT=\sum_{i=1}^d\lambda_k\bu_k^{(1)}\otimes\dots\otimes\bu_k^{(p)}
\end{equation}
where $\lambda_1\ge \lambda_2\ge\cdots\lambda_d\ge 0$, and the matrices $\bU^{(q)}=[\bu_1^{(q)}\,\dots\,\bu_d^{(q)}]\in \RR^{d\times d}$ for $1\le q\le p$ are orthonormal. It is well known that such a decomposition is essentially unique. Here we are interested in its stability: how perturbation to $\scrT$ may affect our ability to reconstruct the spectral parameters $\lambda_k$s and $\bu^{(q)}_k$s, which we shall refer to as the essential singular values and vectors, or simply singular values and vectors when no confusion occurs, of $\scrT$. See Section 2 for discussion of singular values and vectors for tensors. 

Perturbation theory of this nature is well-developed in the case of matrices ($p=2$) and can be traced back to the classical works  \cite{weyl1912asymptotische}, \cite{davis1970rotation} and  \cite{wedin1972perturbation}. See, e.g., \cite{stewart1990matrix} for a comprehensive survey. These results provide the essential tools for numerous applications in various scientific and engineering domains. As multilinear arrays appear more and more often in these applications, many attempts have been made to develop similar tools for higher order tensors in recent years. Because of the unique challenges associated with higher order tensors, most if not all existing studies along this direction customize their analysis and hence the resulting bounds for a specific algorithm or method. See, e.g., \cite{anand2014tensor, mu2015successive, mu2017greedy, belkin2018eigenvectors}. The aim of this article is to fill in the important step of providing universal perturbation bounds that is in the same spirit as matrix perturbation analysis and independent of a specific algorithm. Doing so not only provides universal perturbation bounds that can be useful for all these applications together, but also allows us to recognize the fundamental similarities and differences between matrices and higher order tensor from yet another perspective.

In particular, consider, in addition to $\scrT$, a second odeco tensor $\tilde{\scrT}$:
\begin{equation}
	\label{eq:odec2}
	\tilde{\scrT}=\sum_{i=1}^d\tilde{\lambda}_k\tilde{\bu}_k^{(1)}\otimes\dots\otimes\tilde{\bu}_k^{(p)}.
\end{equation}
We are interested in how the differences between the two sets of values $\lambda_k$s and $\tilde{\lambda}_k$s as well as the vectors $\bu^{(q)}_k$s and $\tilde{\bu}^{(q)}_k$s are characterized by the spectral norm of the difference $\scrT-\tilde{\scrT}$, in a spirit similar to classical results for matrices. We show that there exist a \emph{numerical} constant $C_0\ge 1$ and a permutation $\pi: [d]\to[d]$ such that for all $k=1,\ldots,d$,
\begin{equation}
	\label{eq:weyl}
	|\lambda_k-\tilde{\lambda}_{\pi(k)}|\le C_0\|\scrT-\tilde{\scrT}\|,
\end{equation}
and 
\begin{equation}
	\label{eq:wedin}
	\max_{1\le q\le p}\sin\angle (\bu^{(q)}_k,\tilde{\bu}^{(q)}_{\pi(k)})\le C_0\cdot{\|\scrT-\tilde{\scrT}\|\over \lambda_k},
\end{equation}
under the convention that $1/0=+\infty$. Here and in what follows $\angle (\bu,\tilde{\bu})$ is the angle between two vectors $\bu$ and $\tilde{\bu}$ taking value in $[0,\pi/2]$, and the spectral norm of a tensor $\scrA\in \RR^{d\times\cdots\times d}$ is defined by
$$
\|\scrA\|=\max_{\bu^{(q)}\in \calS^{d-1}} \langle \scrA, \bu^{(1)}\otimes\cdots\otimes \bu^{(p)}\rangle.
$$
and $\calS^{d-1}$ denotes the unit sphere in $\RR^{d}$.

We want to emphasize that the constant $C_0$ in \eqref{eq:weyl} and \eqref{eq:wedin} is absolute and independent of $\scrT$, $\tilde{\scrT}$, and their dimensionality $d$ or $p$. This is especially relevant and important when dealing with high dimensional problems either statistically or numerically, as we shall demonstrate in Section \ref{sec:app}. In particular, we can take the constant $C_0$ above to be 17. We did not attempt to optimize this constant to its fullest extent, as a much better value can be provided if there is more information on how $\scrT$ and $\tilde{\scrT}$ are related: if a singular value $\lambda_k$ is sufficiently large relative to the size of perturbation $\|\scrT-\tilde{\scrT}\|$, then we can take the constant $C_0=1$ in \eqref{eq:weyl} and arbitrarily close to $1$ in \eqref{eq:wedin}.  In particular, under infinitesimal perturbations such that $\|\scrT-\tilde{\scrT}\|=o(\lambda_k)$, we have
\begin{equation}
	\label{eq:weylpert}
	|\lambda_k-\tilde{\lambda}_{\pi(k)}|\le \|\tilde{\scrT}-\scrT\|,
\end{equation}
and
\begin{equation}
	\label{eq:wedinpert}
	\max_{1\le q\le p}\sin\angle (\bu^{(q)}_k,\tilde{\bu}^{(q)}_{\pi(k)})\le {\|\tilde{\scrT}-\scrT\|\over \lambda_k}+o\left({\|\tilde{\scrT}-\scrT\|\over \lambda_k}\right).
\end{equation}
Both bounds are sharp in that the leading terms cannot be further improved. This is clear by considering two rank-one tensors differing only in the nonzero singular value or in one of its corresponding singular vectors. 

Note that every matrix is odeco, \eqref{eq:weyl}-\eqref{eq:wedinpert} therefore directly extends classical results for matrices ($p=2$) by  \cite{weyl1912asymptotische}, \cite{davis1970rotation},  \cite{wedin1972perturbation} among others. However, in spite of the similarity in appearance, there are also crucial distinctions between matrices and higher order tensors ($p\ge 3$). In particular, the $\sin\Theta$ theorems of Davis-Kahan-Wedin bound the perturbation effect on the $k$th singular vector by $C\|\tilde{\scrT}-\scrT\|/\min_{j\neq k}|\lambda_j-\lambda_k|$. The dependence on the gap $\min_{j\neq k}|\lambda_j-\lambda_k|$ between $\lambda_k$ and other singular values is unavoidable for matrices. This is not the case for higher-order odeco tensors where perturbation affects the singular vectors in separation. Indeed the crux of our technical argument is devoted to proving this by careful control of spillover effect of not knowing other singular tuples on $(\lambda_k,\bu_k^{(1)},\ldots,\bu_k^{(p)})$ and showing that the approximation errors do not accumulate.

In general, a perturbed odeco tensor $\scrX=\scrT+\scrE$ may no longer be odeco and hence it may not be possible to match its singular value/vector tuples with the essential singular value/vector tuples of the unperturbed odeco tensor. To overcome this obstacle, we shall consider instead an odeco approximation $\tilde{\scrT}$ to $\scrX$ such that $\|\tilde{\scrT}-\scrX\|\le C_1\|\scrE\|$ for some constant $C_1>0$. By triangular inequality,
\begin{equation}
	\label{eq:aprxodec}
	\|\scrT-\tilde{\scrT}\|\le\|\scrT-\scrX\|+ \|\tilde{\scrT}-\scrX\|=(C_1+1)\|\scrE\|.
\end{equation}
Then \eqref{eq:weyl} and \eqref{eq:wedin} imply that
$$
|\lambda_k-\tilde{\lambda}_{\pi(k)}|\le C_0(C_1+1)\|\scrE\|
$$
and
$$
\max_{1\le q\le p}\sin\angle (\bu^{(q)}_k,\tilde{\bu}^{(q)}_{\pi(k)})\le C_0(C_1+1)\cdot{\|\scrE\|\over \lambda_k},
$$
where $(\tilde{\lambda}_k,\bu^{(q)}_k: 1\le q\le p)$s are the (essential) singular value/vectors tuple of $\tilde{\scrT}$. These bounds complement the well known identifiability of odeco decomposition that states if $\scrE=0$, then all $\bu^{(q)}_k$s are uniquely defined. When $\scrE\neq 0$, $\scrX$ is not necessarily odeco but our results indicate that when the perturbation is small, any ``reasonable'' odeco approximation of $\scrX$ would have ``similar'' essential singular values and vectors. This is more general than identifiability and in fact characterizes the \emph{stability} of odeco decomposition or the local geometry of the space of odeco tensors.

It is natural to consider deriving perturbation bounds for higher order tensors by first flattening them into matrices and then applying the existing bounds for matrices. As we shall show, such a na\"ive approach is suboptimal in that it inevitably leads to perturbation bounds in terms of the matricized spectral norm. Although it is possible to further bound matricized spectral norms using tensor spectral norms, it leads to an extra multiplicative factor depending on the dimension ($d$) polynomially, and makes the resulting bounds unsuitable for applications in high dimensional problems. Our results demonstrate that there could be tremendous gain by treating higher order tensors as tensors instead of matrices. Moreover, the matricization approach fails to yield meaningful perturbation bounds for an essential singular vector when the corresponding singnular value is not simple, e.g., when $\lambda_k=\lambda_{k+1}$. We summarize classical perturbation bounds for matrices and those we establish for odeco tensors in Table \ref{tab:foo}.

\begin{table}[htbp]
	\begin{center}
		\caption{Comparison of Perturbation Bounds in terms of $\|\scrE\|$, up to a constant factors.}  \label{tab:foo}
	\end{center}
	\begin{center}
		\begin{tabular}{c|cc} \hline\hline
			& Singular values & Singular vectors\\
			& ($|\lambda_k-\tilde{\lambda}_{\pi(k)}|$) & ($\sin\angle(\bu^{(q)}_k,\tilde{\bu}^{(q)}_{\pi(k)})$\\
			\hline
			& & \\
			Matrix & $\|\scrE\|$ & $\|\scrE\|\over \min\{\lambda_{k-1}-\lambda_k, \lambda_k-\lambda_{k+1}\}$\\
			\hline
			& \multicolumn{2}{c}{with matricization}\\
			Odeco Tensor & ${\rm poly}(d)\cdot\|\scrE\|$  & ${{\rm poly}(d)\cdot\|\scrE\|\over \min\{\lambda_{k-1}-\lambda_k, \lambda_k-\lambda_{k+1}\}}$\\
			\cline{2-3}
			& \multicolumn{2}{c}{without matricization}\\
			& $\|\scrE\|$  & $\|\scrE\|\over \lambda_k$\\
			\hline
		\end{tabular}
	\end{center}
\end{table}

Given the importance of perturbation analysis in fields such as machine learning, numerical analysis, and statistics, it is conceivable that our analysis and algorithms can prove useful in many situations. For illustration, we shall consider a specific example, namely high dimensional tensor SVD. Our general perturbation bound immediately leads to new insights to the problem. In particular, we establish minimax optimal rates for estimating the singular vectors of an odeco tensor when contaminated with Gaussian noise. Our result indicates that any of its singular vectors can be estimated as well as if all other singular values are zero, or in other words, as in the rank one case.

Our development is related to the fast-growing literature on using tensor methods in statistics and machine learning. In particular, there is a fruitful line of research in developing algorithm dependent bounds for odeco tensors. In these applications, we always encounter a noisy version of the signal tensor and dimension-independent perturbation bounds of the singular values and vectors are the most critical tool in the analysis. See \cite{janzamin2019spectral} for a recent survey. A significant conceptual difference between these bounds and those classical perturbation bounds for matrices is that they are specific to the algorithms used in computing $\tilde{\scrT}$ or equivalently its SVD. The perturbation bounds we provide complement these earlier developments in a number of ways. First of all, our bounds could be readily used for perturbation analysis of any algorithm that produces an odeco approximation, allowing us to derive bounds on the singular values and vectors from those on the approximation error of the tensor itself. As such we do not rely on the specific form of the error tensor (as in \cite{anand2014tensor}, \cite{anand2014sample} or \cite{belkin2018eigenvectors}) and also have the weakest possible assumption on the signal to noise ratio. On the other hand, our bounds can also serve as a benchmark on how well any procedure, computationally feasible or not, could perform. Indeed as we can see from the high dimensional data analysis example, our perturbation bounds often yield tight information theoretical limits for statistical inferences. In fact a similar rate optimality continues to hold for a number of other tensor data problems.

The rest of the paper is organized as follows. In the next section, we derive perturbation bounds for a pair of odeco tensors. Section 3 extends these bounds to nearly odeco tensors. 
Proofs of the main results are presented in Section 4.

\section{Perturbation Bounds between Odeco Tensors}
In this section, we shall consider perturbation analysis for a pair of odeco tensors. We first review some basic properties of odeco tensors and then consider two ways to derive perturbation bounds between a pair of odeco tensor: one through matricization and the other by treating tensors as tensors. While we focus primarily on the so-called essential singular values and vectors, we shall also brief discuss how our techniques may be used to derive perturbation bounds for general singular values and vectors.

\subsection{Odeco Tensors}

We say a $p$th order tensor $\scrT\in \RR^{d_1\times\cdots\times d_p}$ is odeco if it can be expressed as
\begin{equation}
	\label{eq:svd}
	\scrT=\sum_{k=1}^{d_{\min}}\lambda_k \bu_k^{(1)}\otimes\cdots\otimes\bu_k^{(p)}
\end{equation}
for some scalars $\lambda_1\ge \ldots \ge \lambda_{d_{\min}}\ge 0$ and unit vectors $\bu_k^{(q)}$s such that $\langle \bu_{k_1}^{(q)}, \bu_{k_2}^{(q)}\rangle =\delta_{k_1k_2}$ where $d_{\min}=\min\{d_1,\ldots,d_p\}$ and $\delta$ is the Kronecker's delta. Note that there is no loss of generality in assuming that $\lambda_k$s are nonnegative as we can flip the sign of $\bu_k^{(q)}$s accordingly.  See, e.g., \cite{kolda2001ortho, robeva2016orthogonal, robeva2017singular} for further discussion of orthogonally decomposable tensors. For brevity, we shall write
$$
\scrT=[\{\lambda_k: 1\le k\le d_{\min}\}; \bU^{(1)},\ldots,\bU^{(p)}]
$$
if \eqref{eq:svd} holds. Here $\bU^{(q)}\in \RR^{d_q\times d_{\min}}$ with $\bu_k^{(q)}$ as its $k$th column. 

Recall that, in general, singular values and vectors for a tensor $\scrT$ are defined as tuples $(\lambda,\bv^{(1)},\dots,\bv^{(p)})\in \RR\times \RR^{d_1}\times\dots\times \RR^{d_p}$ such that $\|\bv^{(q)}\|=1$ and  
$$
\scrT\times_{j\neq q}\bv^{(j)}=\lambda \bv^{(q)}\quad{\rm for}\quad \,q=1,\dots,p.
$$
See, e.g., \cite{hackbusch2012tensor, qi2017tensor} for further details. For odeco tensors, all possible singular values and vectors of $\scrT$ can be characterized by $\lambda_k$s and $\bu^{(q)}_k$s: if $\lambda_r>0=\lambda_{r+1}$, then the real singular values and singular vectors of $\scrT$ are either tuples $(\lambda,\bv^{(1)},\dots,\bv^{(p)})$ of the form:
$$
\lambda = \left(\sum_{k\in S}\dfrac{1}{\lambda_k^{\tfrac{2}{p-2}}}\right)^{-\tfrac{p-2}{2}},\qquad
\,\langle \bv^{(q)},\,\bu^{(q)}_k\rangle
=\begin{cases}
	\chi_k^{(q)}\left(\lambda\over\lambda_k\right)^{1/(p-2)}\,&\text{if }k\in S\\
	0\quad&\text{otherwise}
\end{cases}$$
where $S\subset[r]$, $S\neq \emptyset$, and $\chi_k^{(q)}\in\{+1,-1\}$ satisfy $\displaystyle\prod_{q=2}^p\chi_k^{(q)}=1$ for all $1\le k\le r $; or $\lambda=0$, and $(\bv^{(1)},\dots,\bv^{(p)})$ are such that for every $1\le k\le d_{\min}$, there exist at least two values of $q\in\{1,\dots,p\}$ with $\langle \bv^{(q)},\bu^{(q)}_k\rangle=0$. See \cite{robeva2017singular} for details. 

In this article, we shall focus primarily on the perturbation of singular value/vector tuples $(\lambda_k,\bu_k^{(q)}: 1\le q\le p)$s and refer to them as the \emph{essential singular values and vectors}, or with some abuse of notation singular values and vectors for short, of $\scrT$ with the exception of Section 2.5 where we shall explicitly discuss how perturbation bounds for other singular value and vectors can be obtained.

In the case when all $\lambda_k$s are distinct, the essential singular values and vectors can be identified by the so-called higher-order SVD (HOSVD) which applies SVD after flattening a higher-order tensor to a matrix, for example, by collapsing all indices except the first one. See, e.g., \cite{de2000multilinear, de2000best}. However, this is not the case when the singular values have multiplicity more than one (i.e., when $\lambda_k=\lambda_{k+1}$ for some $k$ on the right hand side of \eqref{eq:svd}) since HOSVD can only identify the singular space associated with a singular value. This subtle difference also has important practical implications. In general, the essential singular vectors of odeco tensors cannot be computed via HOSVD unless all singular values are distinct.

Nonetheless computing the essential singular value/vectors for an odeco tensor is tractable. For example, it can be computed via Jennrich's algorithm when $p=3$. See, e.g., \cite{harshman1970foundations, leurgans1993decomposition}. More generally, efficient algorithms also exist to take full advantage of the orthogonal structure. In particular, if an odeco tensor is symmetric so that $d_1=\cdots=d_p=:d$, and $\bu_k^{(1)}=\cdots=\bu_k^{(p)}=:\bu_k$ for all $k=1,\ldots,d$, \cite{belkin2018eigenvectors} showed that $\pm\bu_k$s are the only local maxima of
$$F(\ba):=|\langle \scrT, \ba\otimes\cdots\otimes \ba\rangle|$$
over $\calS^{d-1}$. In addition, there is a full measure set $\calU\subset \calS^{d-1}$ such that a gradient iteration algorithm with initial value arbitrarily chosen from $\calU$ converges to one of the $\bu_k$s. In light of these properties, one can enumerate all the essential singular values and essential singular vectors by repeatedly applying the gradient iteration algorithm with an initial value randomly chosen from the orthogonal complement of the linear space spanned by those already identified local maxima. For this property, these vectors are also called robust singular vectors in the literature \citep[see][]{anand2014tensor}. Interested readers are referred to \cite{belkin2018eigenvectors} for further details. From a slightly different perspective, \cite{hashemi2018spectral} study trivariate analytic functions that are two way odeco, and describe how CP decomposition enables one to derive low rank approximation of such functions. 

The argument presented in \cite{belkin2018eigenvectors} relies heavily on the hidden convexity of $F$, which no longer holds when $\scrT$ is not symmetric. However, their main observations remain valid for general odeco tensors. More specifically, write
\begin{equation}
	\label{eq:defF}
	F(\ba^{(1)},\ldots, \ba^{(p)}):=|\langle \scrT, \ba^{(1)}\otimes\cdots\otimes \ba^{(p)}\rangle|
\end{equation}
with slight abuse of notation. Denote by
\begin{equation}
	\label{eq:defG}
	G(\ba^{(1)},\ldots, \ba^{(p)}):=\left({\scrT\times_2\ba^{(2)}\cdots\times_p\ba^{(p)}\over\|\scrT\times_2\ba^{(2)}\cdots\times_p\ba^{(p)}\|},\ldots,{\scrT\times_1\ba^{(1)}\cdots\times_{p-1}\ba^{(p-1)}\over\|\scrT\times_1\ba^{(1)}\cdots\times_{p-1}\ba^{(p-1)}\|}\right).
\end{equation}
the gradient iteration function for $F$ so that
$$G_n=\underbrace{G\circ G\circ\cdots\circ G}_{n{\rm\ times}}$$
maps from a set of initial values to the output from running the gradient iteration $n$ times. Similar to the symmetric case, we have the following result for general odeco tensors:

\begin{theorem}\label{th:comp}
	Let $\scrT$ be an odeco tensor, and $F$ and $G$ be defined by \eqref{eq:defF} and \eqref{eq:defG} respectively. Then the set $\{(\pm\bu_k^{(1)},\ldots,\pm\bu_k^{(p)}): \lambda_k>0\}$ is a complete enumeration of all local maxima of $F$. Moreover, there exists a full measure set $\calU\subset \calS^{d_1-1}\times\cdots\times\calS^{d_p-1}$ such that for any $(\ba^{(1)},\ldots,\ba^{(p)})\in \calU$, $G_n(\ba^{(1)},\ldots,\ba^{(p)})\to (\sigma_1\bu_k^{(1)},\ldots,\sigma_p\bu_k^{(p)})$ as $n\to \infty$,  for some $1\le k\le d_{\min}$, and $\sigma_1,\ldots,\sigma_p\in \{\pm 1\}$.
\end{theorem}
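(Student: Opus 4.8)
My plan is to pass to the coordinates $x_k^{(q)}:=\langle \ba^{(q)},\bu_k^{(q)}\rangle$ adapted to the odeco structure, and then combine a Lagrange-multiplier analysis of the critical points of $F$ with a power-iteration analysis of $G$. Since only the components of $\ba^{(q)}$ in the column span of $\bU^{(q)}$ enter $\langle\scrT,\ba^{(1)}\otimes\cdots\otimes\ba^{(p)}\rangle$, the multilinearity of $F$ forces any local maximum with positive value to be attained with each $\ba^{(q)}$ in that span, and by \eqref{eq:defG} the map $G$ lands there after one step; so it suffices to analyze $F=|g|$ with $g(\bx^{(1)},\ldots,\bx^{(p)})=\sum_k\lambda_k\prod_q x_k^{(q)}$ on $(\calS^{d_{\min}-1})^p$. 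In these coordinates the first assertion says: the only local maxima of $|g|$ are the points with $\ba^{(q)}=\pm\bu_k^{(q)}$ for all $q$, for some $k$ with $\lambda_k>0$ (any signs allowed).

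For the critical points of $g$ on $(\calS^{d_{\min}-1})^p$ the Lagrange conditions are $\lambda_k\prod_{r\neq q}x_k^{(r)}=\mu_q x_k^{(q)}$ for all $k,q$; pairing the $q$-th equation (viewed as a vector in $k$) with $\bx^{(q)}$ gives $\mu_q=g$ for every $q$, and then multiplying by $x_k^{(q)}$ shows that, when $g\neq0$, $(x_k^{(q)})^2$ is independent of $q$. This recovers the description in \cite{robeva2017singular}: on the support $S:=\{k:\ x_k^{(q)}\neq0\}$ one has $(x_k^{(q)})^2=t_k$ with $\lambda_k t_k^{(p-2)/2}\prod_q\sgn(x_k^{(q)})=g$ and $\sum_{k\in S}t_k=1$, so in particular $g=\sum_{k\in S}\lambda_k t_k^{p/2}>0$ at any critical point where $F>0$. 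The easy half of the classification is that a critical point with $|S|=1$ (i.e. $\ba^{(q)}=\pm\bu_k^{(q)}$) is a strict local maximum of $F$: a small tangent perturbation of size $\delta^{(q)}$ changes $g$ by $\pm\lambda_k\bigl(1-\tfrac12\sum_q\|\delta^{(q)}\|^2\bigr)+O(\|\delta\|^p)$, and since $p\geq3$ the last term is negligible and $|g|$ strictly decreases. A critical point with $g=0$ is a global minimum of $F\geq0$, hence not a local maximum, since $g$ is a nonzero polynomial and cannot vanish on an open set.

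The crux of the proof is the converse: no critical point with $|S|\geq2$ is a local maximum — exactly the point where \cite{belkin2018eigenvectors} invoked the hidden convexity of the symmetric objective, which is unavailable here. I would argue directly. Flipping the signs of the $\bu_k^{(q)}$ preserves orthogonal decomposability and all $\lambda_k$ (because $\prod_q\sgn(x_k^{(q)})=1$ on $S$), so we may assume $x_k^{(q)}=\sqrt{t_k}$ for all $k\in S$ and all $q$. Fix $k_1,k_2\in S$, write $\sqrt{t_{k_1}}=R\cos\phi$ and $\sqrt{t_{k_2}}=R\sin\phi$ with $\phi\in(0,\pi/2)$, and rotate \emph{every} mode $\ba^{(q)}$ by the same angle $\theta$ in the plane $\mathrm{span}\{\bu_{k_1}^{(q)},\bu_{k_2}^{(q)}\}$; the two affected coordinates of each mode become $R\cos(\phi+\theta)$ and $R\sin(\phi+\theta)$, whence
\[
g(\theta)=\sum_{k\in S\setminus\{k_1,k_2\}}\lambda_k t_k^{p/2}\;+\;R^p\bigl(\lambda_{k_1}\cos^p(\phi+\theta)+\lambda_{k_2}\sin^p(\phi+\theta)\bigr).
\]
The one-variable function $h(\psi)=\lambda_{k_1}\cos^p\psi+\lambda_{k_2}\sin^p\psi$ has $h'(\psi)=p\sin\psi\cos\psi\,\bigl(\lambda_{k_2}\sin^{p-2}\psi-\lambda_{k_1}\cos^{p-2}\psi\bigr)$, and the bracketed factor is strictly increasing on $(0,\pi/2)$ precisely because $p-2\geq1$; hence its unique interior zero $\psi=\phi$ is a strict local \emph{minimum} of $h$. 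Therefore $g(\theta)>g(0)$ for all small $\theta\neq0$, so $F=|g|$ is not locally maximized there. (It is exactly the factor $p-2$ that fails at $p=2$, reflecting the gap-dependence for matrices.) This completes the first assertion.

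For the gradient iteration, write $x_{k,n}^{(q)}$ for the coordinates of $G_n(\ba^{(1)},\ldots,\ba^{(p)})$ and $m_{k,n}:=\prod_q x_{k,n}^{(q)}$. A direct computation from \eqref{eq:defG} gives the recursions $m_{k,n+1}=\lambda_k^p m_{k,n}^{p-1}/Z_n$, and $x_{k,n+2}^{(q)}\propto\lambda_k^p m_{k,n}^{p-2}x_{k,n}^{(q)}$ with a $k$-independent factor, where $Z_n=\prod_q\|\scrT\times_{j\neq q}\ba^{(j)}_n\|$. Hence $m_k\equiv0$ once it is $0$, and for $\lambda_k>0$ the quantity $\xi_{k,n}:=\log|m_{k,n}|+\tfrac{p}{p-2}\log\lambda_k$ satisfies $\xi_{k,n+1}=(p-1)\xi_{k,n}-\log Z_n$, so $\xi_{k,n}-\xi_{j,n}=(p-1)^n(\xi_{k,0}-\xi_{j,0})$: gaps are amplified by $p-1\geq2$ at each step. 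Off the measure-zero set of initializations with some $x_{k,0}^{(q)}=0$ for a $k$ with $\lambda_k>0$, or with a tie in $\arg\max_k\xi_{k,0}$ among such $k$, there is a unique ``winner'' $k^\ast$, with $\lambda_{k^\ast}>0$, so that $m_{k,n}/m_{k^\ast,n}\to0$ for every $k\neq k^\ast$; the coordinate ratios $r_{k,n}^{(q)}:=x_{k,n}^{(q)}/x_{k^\ast,n}^{(q)}$ then obey $r_{k,n+2}^{(q)}=(\lambda_k/\lambda_{k^\ast})^p(m_{k,n}/m_{k^\ast,n})^{p-2}r_{k,n}^{(q)}$ with multiplier tending to $0$, and since $\sum_k(x_{k,n}^{(q)})^2=1$ this forces $x_{k^\ast,n}^{(q)}\to\pm1$ and $x_{k,n}^{(q)}\to0$ for $k\neq k^\ast$, i.e. $G_n\to(\sigma_1\bu_{k^\ast}^{(1)},\ldots,\sigma_p\bu_{k^\ast}^{(p)})$ for signs $\sigma_q\in\{\pm1\}$. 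One checks en route that the winner's coordinates stay off the locus $\scrT\times_{j\neq q}\ba^{(j)}=0$, so $G_n$ is defined throughout; taking $\calU$ to be the complement of these finitely many exceptional subvarieties (a full-measure set) finishes the proof. Apart from the rotation device above, the remaining steps are either standard (the Lagrange computation, the second-order test) or routine null-set bookkeeping.
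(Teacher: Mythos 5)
Your classification of the local maxima is essentially the paper's argument in different clothing: both proofs exploit the critical-point structure (equal overlaps $(\lambda/\lambda_k)^{1/(p-2)}$ on the active set $S$) and then rule out $|S|\ge 2$ by transferring weight between two active indices, with the strict improvement coming exactly from $p-2\ge 1$; your rotation by a common angle $\theta$ in the $\mathrm{span}\{\bu_{k_1}^{(q)},\bu_{k_2}^{(q)}\}$ planes and the paper's $\pm\delta$ shift of the squared overlaps are interchangeable devices, and your handling of $|S|\le 1$ and of $F=0$ critical points matches the paper's in substance. Where you genuinely depart from the paper is the global-attraction half. The paper follows \cite{belkin2018eigenvectors}: exponential map, the differential of $\phi\circ G\circ\phi^{-1}$ at critical points, a hyperbolicity lemma, a stable-manifold theorem to make the basins of critical points with $|S|>1$ null sets, a local-to-global step invoking continuity and injectivity of $G$, and a ratio-amplification lemma. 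You instead analyze the iteration in closed form: $m_{k,n+1}=\lambda_k^p m_{k,n}^{p-1}/Z_n$ with a $k$-independent $Z_n$, the potential $\xi_{k,n}$ whose pairwise differences expand like $(p-1)^n$, a unique winner off an explicit null set, and geometric decay of the coordinate ratios. Those recursions are correct, and this route is more elementary and self-contained (no stable-manifold input, no injectivity of $G$), and it even yields a super-exponential rate, which the paper's argument does not.

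The one step that is too quick is the very last one: from $|x^{(q)}_{k^\ast,n}|\to 1$ you conclude convergence of $G_n$ to a single signed point, but you must check that the signs stabilize. Your one-step recursion gives $\sgn\bigl(x^{(q)}_{k^\ast,n+1}\bigr)=\prod_{j\ne q}\sgn\bigl(x^{(j)}_{k^\ast,n}\bigr)=S_n\,\sgn\bigl(x^{(q)}_{k^\ast,n}\bigr)$ with $S_n=\sgn(m_{k^\ast,n})$ and $S_{n+1}=S_n^{p-1}$. For odd $p$ this forces $S_n=+1$ for $n\ge 1$, the signs freeze, and your proof is complete with this one extra line. For even $p$, however, $S_n\equiv S_0$, and if $S_0=-1$ every sign flips at every step, so $G_n$ oscillates between two distinct points of the form $(\sigma_1\bu_{k^\ast}^{(1)},\ldots,\sigma_p\bu_{k^\ast}^{(p)})$ and does not converge; this happens on a positive-measure set of initializations (e.g.\ $\scrT=\bu\otimes\bu\otimes\bu\otimes\bu$ with $\prod_q\langle\ba^{(q)},\bu\rangle<0$). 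This is not a defect of your strategy alone: the map $G$ in \eqref{eq:defG} carries no $\sgn\langle\scrT,\ba^{(1)}\otimes\cdots\otimes\ba^{(p)}\rangle$ correction and the paper's own proof never tracks these signs either, so for even $p$ the conclusion requires either that sign correction in $G$ or convergence interpreted modulo the simultaneous sign flip. You should make the sign bookkeeping explicit (trivial for $p$ odd, a genuine caveat for $p$ even) rather than fold it into "routine null-set bookkeeping."
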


The main architect of the proof of Theorem \ref{th:comp} is similar to that for symmetric cases. See, e.g., \cite{belkin2018eigenvectors}. For completeness, a detailed proof is included in the Appendix. In light of  Theorem \ref{th:comp}, we can then compute all the essential singular value/vector tuples of an odeco tensor sequentially by applying gradient iterations and random initializations, in the same manner as the symmetric case.

\subsection{Perturbation Bounds via Matricization}
Let $\scrT$ and $\tilde{\scrT}$ be two odeco tensors:
$$
\scrT=[\{\lambda_k: 1\le k\le d_{\min}\}; \bU^{(1)},\ldots,\bU^{(p)}],
$$
and
$$
\tilde{\scrT}=[\{\tilde{\lambda}_k: 1\le k\le d_{\min}\}; \tilde{\bU}^{(1)},\ldots,\tilde{\bU}^{(p)}],
$$
We are interested in characterizing the difference between the two sets of singular values and vectors in terms of the ``perturbation'' $\tilde{\scrT}-\scrT$.

It is instructive to first briefly review classical results in the matrix case, i.e., $p=2$. Note that every matrix is odeco. Perturbation analysis of the singular vectors and spaces for matrices is well studied. See, e.g., \cite{bhatia1987perturbation, stewart1990matrix}, and references therein. In particular, Weyl's perturbation theorem indicates that
\begin{equation}
	\label{eq:matweyl0}
	\max_{1\le k\le d_{\min}} |\lambda_k-\tilde{\lambda}_k|\le \|\scrT-\tilde{\scrT}\|.
\end{equation}
When a singular value $\lambda_k$ has multiplicity more than one, its singular space has dimension more than one and singular vectors $\bu_k$ and $\bv_k$ are no longer uniquely identifiable. But if it is simple, i.e., $\lambda_{k-1}>\lambda_k>\lambda_{k+1}$, then the Davis-Kahan-Wedin $\sin\Theta$ theorem states that
\begin{equation}
	\label{eq:davis0}
	\sin\angle(\bu_k^{(q)},\tilde{\bu}_k^{(q)})\le {\|\scrT-\tilde{\scrT}\|\over \min\{\tilde{\lambda}_{k-1}-\lambda_k,\lambda_k-\tilde{\lambda}_{k+1}\}},
\end{equation}
provided that the denominator on the righthand side is positive. It is oftentimes more convenient to consider a modified version of the above bound for the singular vectors in terms of the gap between singular values of $\scrT$:
\begin{equation}
	\label{eq:davis}
	\sin\angle(\bu_k^{(q)},\tilde{\bu}_k^{(q)})\le {2\|\scrT-\tilde{\scrT}\|\over \min\{\lambda_{k-1}-\lambda_k,\lambda_k-\lambda_{k+1}\}},
\end{equation}
which follows immediately from \eqref{eq:matweyl0} and \eqref{eq:davis0}. To see this, note that \eqref{eq:davis} holds trivially if $\|\scrT-\tilde{\scrT}\|\ge \min\{\lambda_{k-1}-\lambda_k,\lambda_k-\lambda_{k+1}\}/2$. On the other hand, if $\|\scrT-\tilde{\scrT}\|< \min\{\lambda_{k-1}-\lambda_k,\lambda_k-\lambda_{k+1}\}/2$, it follows from \eqref{eq:matweyl0} that
\begin{eqnarray*}
	\min\{\tilde{\lambda}_{k-1}-\lambda_k,\lambda_k-\tilde{\lambda}_{k+1}\}&\ge& \min\{\lambda_{k-1}-\lambda_k,\lambda_k-\lambda_{k+1}\}-\|\scrT-\tilde{\scrT}\|\\
	&\ge& {1\over 2}\min\{\lambda_{k-1}-\lambda_k,\lambda_k-\lambda_{k+1}\},
\end{eqnarray*}
and therefore \eqref{eq:davis} follows from \eqref{eq:davis0}.

It is worth noting that the dependence of any general perturbation bounds for singular vectors on the gap between singular values is unavoidable for matrices and can be illustrated by the following simple example from \cite{bhatia2013matrix}:
\begin{equation}
	\label{eq:bhatia}
	\scrT=\left(\begin{array}{cc}1+\delta& 0\\ 0& 1-\delta\end{array}\right),\qquad {\rm and}\qquad \tilde{\scrT}=\left(\begin{array}{cc}1 & \delta\\ \delta& 1\end{array}\right).
\end{equation}
It is not hard to see that $\|\scrT-\tilde{\scrT}\|=\sqrt{2}\delta$ and can be made arbitrarily small at the choice of $\delta>0$. Yet the singular vectors of $\scrT$ and $\tilde{\scrT}$ are $\{(0,1)^\top,(1,0)^\top\}$ and $\{(1/\sqrt{2}, 1/\sqrt{2})^\top,(1/\sqrt{2}, -1/\sqrt{2})^\top\}$ respectively so that
$$
\sin\angle(\bu_k^{(q)},\tilde{\bu}_k^{(q)})={\|\scrT-\tilde{\scrT}\|\over \lambda_1-\lambda_2},
$$
for $k=1,2$ and $q=1,2$.

These classical perturbation bounds can be applied to higher-order tensors using matricization or flattening, as for HOSVD. More precisely, write ${\sf Mat}_q: \RR^{d_1\times\cdots\times d_p}\to$
\\$ \RR^{d_q\times d_{-q}}$ by collapsing all indices other than the $q$th one and therefore converting a $p$th order tensor into a $d_q\times d_{-q}$ matrix where $d_{-q}=d_1\cdots d_{q-1}d_{q+1}\cdots d_p$. For an odeco tensor $\scrT$, its SVD determines that of ${\sf Mat}_q(\scrT)$. More specifically,
$$
{\sf Mat}_q(\scrT)=\bU^{(q)}({\rm diag}(\lambda_1,\ldots,\lambda_{d_{\min}}))(\bV^{(q)})^\top,
$$
where
$$
\bV^{(q)}=\bU^{(1)}\odot\cdots\odot\bU^{(q-1)}\odot\bU^{(q+1)}\odot\cdots\odot\bU^{(p)}.
$$
Here $\odot$ stands for the Khatri-Rao product. This, in light of \eqref{eq:matweyl0} and \eqref{eq:davis}, immediately implies that
\begin{proposition}\label{pr:matricize}
	Let $\scrT$ and $\tilde{\scrT}$ be two $d_1\times\cdots\times d_p$ odeco tensors with SVD:
	$$
	\scrT=[\{\lambda_k: 1\le k\le d_{\min}\}; \bU^{(1)},\ldots,\bU^{(p)}],
	$$
	and
	$$
	\tilde{\scrT}=[\{\tilde{\lambda}_k: 1\le k\le d_{\min}\}; \tilde{\bU}^{(1)},\ldots,\tilde{\bU}^{(p)}],
	$$
	respectively where $d_{\min}=\min\{d_1,\ldots, d_p\}$. If $\lambda_k$ is simple, then
	$$
	|\lambda_k-\tilde{\lambda}_k|\le \min_{1\le q\le p}\|{\sf Mat}_q(\scrT)-{\sf Mat}_q(\tilde{\scrT})\|,
	$$
	and
	$$
	\sin\angle(\bu_k^{(q)},\tilde{\bu}_k^{(q)})\le {2\|{\sf Mat}_q(\scrT)-{\sf Mat}_q(\tilde{\scrT})\|\over \min\{\lambda_{k-1}-\lambda_k,\lambda_k-\lambda_{k+1}\}}.
	$$
\end{proposition}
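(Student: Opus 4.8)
The plan is to reduce the statement to the classical matrix bounds \eqref{eq:matweyl0} and \eqref{eq:davis} applied to the flattenings ${\sf Mat}_q(\scrT)$ and ${\sf Mat}_q(\tilde{\scrT})$; the only thing that needs checking is that the displayed factorization ${\sf Mat}_q(\scrT)=\bU^{(q)}\,{\rm diag}(\lambda_1,\ldots,\lambda_{d_{\min}})\,(\bV^{(q)})^\top$ really is a singular value decomposition. First I would record the rank-one bookkeeping: for each $k$, ${\sf Mat}_q(\bu_k^{(1)}\otimes\cdots\otimes\bu_k^{(p)})=\bu_k^{(q)}(\bv_k^{(q)})^\top$, where $\bv_k^{(q)}:=\bu_k^{(1)}\otimes\cdots\otimes\bu_k^{(q-1)}\otimes\bu_k^{(q+1)}\otimes\cdots\otimes\bu_k^{(p)}$ is read as a Kronecker product of vectors; summing over $k$ gives the claimed factorization with $\bV^{(q)}=\bU^{(1)}\odot\cdots\odot\bU^{(q-1)}\odot\bU^{(q+1)}\odot\cdots\odot\bU^{(p)}$.

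Next I would verify that $\bV^{(q)}$ has orthonormal columns: $\langle\bv_{k_1}^{(q)},\bv_{k_2}^{(q)}\rangle=\prod_{j\neq q}\langle\bu_{k_1}^{(j)},\bu_{k_2}^{(j)}\rangle=\prod_{j\neq q}\delta_{k_1k_2}=\delta_{k_1k_2}$, which is exactly where orthogonal decomposability is used. Since $\bU^{(q)}$ is also orthonormal and $\lambda_1\ge\cdots\ge\lambda_{d_{\min}}\ge0$, the factorization is a (thin) SVD of ${\sf Mat}_q(\scrT)$; appending zeros, the full singular value list of ${\sf Mat}_q(\scrT)$ is $\lambda_1,\ldots,\lambda_{d_{\min}}$ followed by zeros, and the left singular vector attached to $\lambda_k$ is $\bu_k^{(q)}$. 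The same applies to $\tilde{\scrT}$, with $\tilde{\lambda}_k$ and $\tilde{\bu}_k^{(q)}$.

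With this in place the two inequalities are immediate. Applying Weyl's inequality \eqref{eq:matweyl0} to the pair ${\sf Mat}_q(\scrT),{\sf Mat}_q(\tilde{\scrT})$ gives $|\lambda_k-\tilde{\lambda}_k|\le\|{\sf Mat}_q(\scrT)-{\sf Mat}_q(\tilde{\scrT})\|$ for every $q$, and minimizing over $q$ yields the first claim. For the second, if $\lambda_k$ is simple for $\scrT$, i.e.\ $\lambda_{k-1}>\lambda_k>\lambda_{k+1}$ (so in particular $\lambda_k>0$), then $\lambda_k$ remains a simple singular value of ${\sf Mat}_q(\scrT)$ --- it is separated from $\lambda_{k\pm1}$ by hypothesis and from the appended zeros because $\lambda_k>0$ --- so \eqref{eq:davis} applies to ${\sf Mat}_q(\scrT),{\sf Mat}_q(\tilde{\scrT})$ with left singular vectors $\bu_k^{(q)},\tilde{\bu}_k^{(q)}$, giving precisely the stated bound on $\sin\angle(\bu_k^{(q)},\tilde{\bu}_k^{(q)})$. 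The sign ambiguity of a singular vector is harmless since $\angle$ is taken in $[0,\pi/2]$.

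I do not anticipate a real obstacle: the argument is a short reduction, and the only points requiring care are the indexing in the matricization identity and the remark that a simple tensor singular value stays simple after flattening (which could fail only when $\lambda_k=0$, excluded by simplicity). The substantive issue --- that the factor hidden in $\|{\sf Mat}_q(\scrT)-{\sf Mat}_q(\tilde{\scrT})\|$ relative to $\|\scrT-\tilde{\scrT}\|$ is genuinely dimension-dependent, making this bound inferior to the tensor-native bounds developed later --- is a separate discussion and not part of proving the proposition.
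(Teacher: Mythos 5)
Your argument is exactly the paper's: the proposition is stated as an immediate consequence of the Khatri--Rao factorization ${\sf Mat}_q(\scrT)=\bU^{(q)}\,\mathrm{diag}(\lambda_1,\ldots,\lambda_{d_{\min}})\,(\bV^{(q)})^\top$ being a genuine SVD (orthonormality of the Khatri--Rao columns following from odeco orthogonality), combined with Weyl's bound \eqref{eq:matweyl0} and the modified $\sin\Theta$ bound \eqref{eq:davis} applied to the flattenings. You simply spell out the bookkeeping the paper leaves implicit, including the correct observation that simplicity forces $\lambda_k>0$ so the appended zero singular values of the flattening cause no trouble.
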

These bounds, however, are suboptimal and can be significantly improved in a couple of directions that highlight fundamental differences between matrices and higher-order tensors.

First of all, we can derive perturbation bounds in terms of the tensor operator norm $\|\scrT-\tilde{\scrT}\|$. Although it is true that $\|\scrA\|=\|{\sf Mat}_q(\scrA)\|$ for $q=1,\ldots, p$ for an odeco tensor $\scrA$, the difference between two odeco tensors is not necessarily odeco and as a result $\|\scrT-\tilde{\scrT}\|$ and $\|{\sf Mat}_q(\scrT)-{\sf Mat}_q(\tilde{\scrT})\|$ can be quite different. As a simple example, consider the case when $\scrT=\bu\otimes\bu\otimes \bu$ and $\tilde{\scrT}=\bu\otimes\bv\otimes\bv$ where $\bu=(0,1)^\top$ and $\bv=(1,0)^\top$. It is easy to see that $\|\scrT-\tilde{\scrT}\|=1$ and yet $\|{\sf Mat}_1(\scrT)-{\sf Mat}_1(\tilde{\scrT})\|=\sqrt{2}$. Note that we can always bound
$$
\|{\sf Mat}_q(\scrA)\|\le C_d\|\scrA\|
$$
for a multiplicative factor $C_d$ that depends on the dimension $d$ so that we can translate the aforementioned bounds on $\tilde{\lambda}_k$ and $\tilde{\bu}_k$ in terms of tensor spectral norm $\|\scrT-\tilde{\scrT}\|$. This, however, is rather unsatisfactory when it comes to high dimensional problems ($d$ is large) as $C_d\ge \sqrt{d-1}$ as the following example shows.

Let
\begin{equation}\label{eq:ortho_cntrex}
	\scrT=\lambda \displaystyle\sum_{i=1}^{d-1}\be_i\otimes \be_i\otimes \be_i
	\quad
	{\rm and }
	\quad
	\tilde{\scrT}=\lambda \displaystyle\sum_{i=1}^{d-1}({\be}_i+\bv)\otimes \be_i\otimes \be_i
\end{equation}
where
$$\bv=\dfrac{1}{\sqrt{d-1}}\be_d-\dfrac{1}{d-1}\left(\be_1+\dots+\be_{d-1}\right).$$ 
It is easy to see that both are odeco. Note that $\scrT-\tilde{\scrT}=\lambda\displaystyle\sum_{i=1}^{d-1}\bv\otimes \be_i\otimes \be_i$ and hence
$$\|\scrT-\tilde{\scrT}\|=\lambda\underset{\ba,\bb,\bc\in\calS^{d-1}}{\sup}\langle \bv,\ba\rangle \sum_{i=1}^{d-1}b_ic_i=\lambda \|\bv\|.$$
On the other hand,
$${\sf Mat}_1(\scrT-\tilde{\scrT})=\lambda\displaystyle \sum_{i=1}^{d-1}\bv\left(\be_i\odot \be_i\right)^\top.$$
With the two unit vectors
$$\ba=\bv/\|\bv\|\qquad {\rm and} \qquad \bb=\dfrac{1}{\sqrt{d-1}}\displaystyle\sum_{i=1}^{d-1}\left(\be_i\odot \be_i\right),$$
we have 
$$\|{\sf Mat}_1(\scrT-\tilde{\scrT})\|\ge \ba^\top{\sf Mat}_1(\scrT-\tilde{\scrT})\bb =\lambda \|\bv\|\sqrt{d-1}= \sqrt{d-1}\|\scrT-\tilde{\scrT}\|.$$

This immediately suggests that the constant $C_d$ in the bound derived from matricization necessarily diverges as $d$ increases when $p>2$, and this renders the perturbation bounds derived from matricization ineffective in many applications where the focus is on pinpointing the effect of increasing dimensionality. Fortunately, as we shall show in the next subsection, much sharper perturbation bounds in terms of $\|\scrT-\tilde{\scrT}\|$ are available.

Perhaps more importantly, another undesirable aspect of the aforementioned perturbation bounds for higher-order odeco tensors is the dependence on the gap between singular values. For matrices it is only meaningful to talk about singular spaces when a singular value is not simple, and the aforementioned bounds for $\sin\angle(\bu_k^{(q)},\tilde{\bu}_k^{(q)})$ does not tell us anything about the perturbation of the singular vectors at all when a singular value is not simple even though all essential singular vectors are identifiable for higher-order odeco tensors regardless of the multiplicity of its singular values. Indeed, as we shall show, that the gap $\min\{\lambda_{k-1}-\lambda_k,\lambda_k-\lambda_{k+1}\}$ is irrelevant for perturbation analysis of a higher order odeco tensors, and perturbation of each singular vectors is independent of other singular values.

\subsection{Perturbation Bounds for Odeco Tensors}

To appreciate the difference in perturbation effect between matrices and higher-order tensors, we first take a look at the Weyl's bound for singular values which states that, in the matrix case, i.e., $p=2$,
\begin{equation}
	\label{eq:matweyl}
	\max_{1\le k\le d}|\lambda_k-\tilde{\lambda}_k|\le \|\scrT-\tilde{\scrT}\|.
\end{equation}
More generally, when $p$ is even, asymptotic bounds for simple singular values under infinitesimal perturbation have been studied recently by \cite{che2016perturbation}. Their result implies that, in our notation, if $p$ is even and a simple singular value $\lambda_j$ is sufficiently far away from $\lambda_{j-1}$ and $\lambda_{j+1}$, then
$$
|\tilde{\lambda}_j-\lambda_j|\le \|\tilde{\scrT}-\scrT\|+O(\|\tilde{\scrT}-\scrT\|^2),
$$
as $\|\tilde{\scrT}-\scrT\|\to 0$. This appears to suggest that it is plausible that \eqref{eq:matweyl} could continue to hold for higher-order odeco tensors. Unfortunately, this is not the case and \eqref{eq:matweyl} does not hold in general for higher-order odeco tensors. To see this, let
$$
\scrT=2\bfe_1\otimes \bfe_1\otimes \bfe_1
$$
and
$$
\tilde{\scrT}=(\bfe_1+\bfe_2)\otimes (\bfe_1+\bfe_2)\otimes (\bfe_1+\bfe_2)+(\bfe_1-\bfe_2)\otimes (\bfe_1-\bfe_2)\otimes (\bfe_1-\bfe_2).
$$
Obviously $(\lambda_1,\lambda_2)=(2,0)$ and $(\tilde{\lambda}_1,\tilde{\lambda}_2)=(2\sqrt{2},2\sqrt{2})$ so that
$$
\max\{|\lambda_1-\tilde{\lambda}_1|,|\lambda_2-\tilde{\lambda}_2|\}=2\sqrt{2}.
$$
On the other hand, as shown by \cite{yuan2016tensor}
$$
\|\tilde{\scrT}-\scrT\|=2\|\bfe_2\otimes \bfe_2\otimes \bfe_1+\bfe_2\otimes\bfe_1\otimes\bfe_2+\bfe_1\otimes\bfe_2\otimes\bfe_2\|=4/\sqrt{3}<2\sqrt{2},
$$
invalidating \eqref{eq:matweyl}.

At a more fundamental level, for matrices, Weyl's bound can be viewed as a consequence of Courant-Fischer-Weyl min-max principle which states that
\begin{equation}
	\label{eq:minmax1}
	\lambda_k=\min_{S: {\rm dim}(S)=d_1-k+1}\max_{\substack{\bx^{(1)}\in \calS^{d_1-1}\cap S\\ \bx^{(2)}\in \calS^{d_2-1}}}\langle \scrT, \bx^{(1)}\otimes\bx^{(2)}\rangle,
\end{equation}
and
\begin{equation}
	\label{eq:minmax2}
	\lambda_k=\max_{S: {\rm dim}(S)=k}\min_{\bx^{(1)}\in \calS^{d_1-1}\cap S}\max_{\bx^{(2)}\in \calS^{d_2-1}}\langle \scrT, \bx^{(1)}\otimes\bx^{(2)}\rangle.
\end{equation}
Similar characterizations, however, do not hold for higher-order tensors. As an example, consider a $p$th order odeco tensor of dimension $d\times\cdots\times d$ and with equal singular values. The following proposition shows that neither \eqref{eq:minmax1} nor \eqref{eq:minmax2} holds, in particular for the smallest essential singular value $\lambda_d$ where the righthand side of both equations can be expressed as
$$
\min_{\bx^{(1)}\in \calS^{d-1}}\max_{\bx^{(2)}, \ldots, \bx^{(p)}\in \calS^{d-1}}\langle \scrT, \bx^{(1)}\otimes\cdots\otimes\bx^{(p)}\rangle.
$$

\begin{proposition}\label{pr:example}
	Let $\scrT$ be a $p$th ($p\ge 3$) order odeco tensor of dimension $d\times\cdots\times d$. If all its essential singular values are $\lambda$, then
	$$
	\min_{\bx^{(1)}\in \calS^{d-1}}\max_{\bx^{(2)}, \ldots, \bx^{(p)}\in \calS^{d-1}}\langle \scrT, \bx^{(1)}\otimes\cdots\otimes\bx^{(p)}\rangle={\lambda\over \sqrt{d}}.
	$$
\end{proposition}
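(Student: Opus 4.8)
The plan is to use the orthogonal invariance of multilinear forms to reduce to a diagonal tensor, and then solve the resulting optimization explicitly. Since $\scrT$ is odeco with all essential singular values equal to $\lambda$, write $\scrT=\lambda\sum_{k=1}^{d}\bu_k^{(1)}\otimes\cdots\otimes\bu_k^{(p)}$ with each $\bU^{(q)}=[\bu_1^{(q)}\,\cdots\,\bu_d^{(q)}]$ orthogonal. Because $\langle\scrT,\bx^{(1)}\otimes\cdots\otimes\bx^{(p)}\rangle=\lambda\sum_k\prod_q\langle\be_k,(\bU^{(q)})^\top\bx^{(q)}\rangle$, replacing each $\bx^{(q)}$ by the unit vector $(\bU^{(q)})^\top\bx^{(q)}$ leaves the form invariant and carries $\calS^{d-1}$ onto itself; hence I may assume $\bu_k^{(q)}=\be_k$ for all $k,q$, i.e. $\scrT=\lambda\sum_{k=1}^{d}\be_k\otimes\cdots\otimes\be_k$. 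The quantity to evaluate then becomes
$$\lambda\cdot\min_{\bx^{(1)}\in\calS^{d-1}}\ \max_{\bx^{(2)},\ldots,\bx^{(p)}\in\calS^{d-1}}\ \sum_{k=1}^{d}x_k^{(1)}\prod_{q=2}^{p}x_k^{(q)}.$$

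Next I would compute the inner maximum for fixed $\bx^{(1)}$ and show it equals $\|\bx^{(1)}\|_\infty$. For the upper bound, estimate
$$\sum_{k=1}^{d}x_k^{(1)}\prod_{q=2}^{p}x_k^{(q)}\ \le\ \|\bx^{(1)}\|_\infty\sum_{k=1}^{d}\prod_{q=2}^{p}|x_k^{(q)}|,$$
and then bound $\sum_k\prod_{q=2}^p|x_k^{(q)}|\le 1$ by AM--GM applied to $|x_k^{(2)}|^{p-1},\ldots,|x_k^{(p)}|^{p-1}$, which gives $\prod_{q=2}^p|x_k^{(q)}|\le(p-1)^{-1}\sum_{q=2}^p|x_k^{(q)}|^{p-1}$, combined with the norm comparison $\sum_k|x_k^{(q)}|^{p-1}=\|\bx^{(q)}\|_{p-1}^{p-1}\le\|\bx^{(q)}\|_2^{p-1}=1$, which holds precisely because $p-1\ge2$. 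Summing over $k$ and over $q=2,\ldots,p$ yields $\sum_k\prod_{q=2}^p|x_k^{(q)}|\le\frac{1}{p-1}\cdot(p-1)=1$. For the matching lower bound, pick $k^\ast$ with $|x_{k^\ast}^{(1)}|=\|\bx^{(1)}\|_\infty$ and take $\bx^{(2)}=\sgn(x_{k^\ast}^{(1)})\be_{k^\ast}$, $\bx^{(3)}=\cdots=\bx^{(p)}=\be_{k^\ast}$, making the sum equal to $|x_{k^\ast}^{(1)}|=\|\bx^{(1)}\|_\infty$. Hence the inner maximum is exactly $\|\bx^{(1)}\|_\infty$.

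Finally, the outer problem reduces to $\min_{\bx^{(1)}\in\calS^{d-1}}\|\bx^{(1)}\|_\infty$, and the elementary inequality $\|\bx\|_\infty\ge\|\bx\|_2/\sqrt{d}=1/\sqrt{d}$, with equality at $\bx^{(1)}=d^{-1/2}(1,\ldots,1)^\top$, gives the value $1/\sqrt{d}$; multiplying by $\lambda$ completes the proof. The step carrying the real content is the bound $\sum_k\prod_{q=2}^p|x_k^{(q)}|\le 1$: this is exactly where $p\ge3$ enters (for $p=2$ the sum is $\|\bx^{(2)}\|_1$, which can be as large as $\sqrt{d}$), and it is the mechanism by which the Courant--Fischer--Weyl min--max identity fails for higher-order tensors. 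Everything else is routine.
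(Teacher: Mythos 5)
Your proof is correct and follows essentially the same route as the paper: both reduce the inner maximization to the statement that, for fixed $\bx^{(1)}$ with coefficients $c_k=\langle\bx^{(1)},\bu_k^{(1)}\rangle$, the maximum over $\bx^{(2)},\ldots,\bx^{(p)}$ equals $\lambda\max_k|c_k|$, and then observe that $\min_{\bc\in\calS^{d-1}}\|\bc\|_\infty=1/\sqrt{d}$. The only difference is that the paper invokes this as the known spectral norm of the $(p-1)$th-order odeco tensor $\sum_k c_k\bu_k^{(2)}\otimes\cdots\otimes\bu_k^{(p)}$, whereas you prove it directly (after diagonalizing by orthogonal invariance) via AM--GM and the comparison $\|\cdot\|_{p-1}\le\|\cdot\|_2$, which also cleanly isolates why the argument needs $p\ge 3$.
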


Although straightforward generalizations of Weyl's bound to higher-order tensor do not hold, perturbation bounds in a similar spirit can still be established. More specifically, we have

\begin{theorem}\label{th:odeco-weyl}
	Let $\scrT$ and $\tilde{\scrT}$ be two $d_1\times\cdots\times d_p$ ($p>2$) odeco tensors with SVD:
	$$
	\scrT=[\{\lambda_k: 1\le k\le d_{\min}\}; \bU^{(1)},\ldots,\bU^{(p)}],
	$$
	and
	$$
	\tilde{\scrT}=[\{\tilde{\lambda}_k: 1\le k\le d_{\min}\}; \tilde{\bU}^{(1)},\ldots,\tilde{\bU}^{(p)}],
	$$
	respectively where $d_{\min}=\min\{d_1,\ldots, d_p\}$. There exist a numerical constant $1\le C\le 17$ and a permutation $\pi: [d_{\min}]\to [d_{\min}]$ such that for all $k=1,\ldots, d_{\min}$,
	\begin{equation}
		\label{eq:odecoweyl0}
		|\lambda_k-\tilde{\lambda}_{\pi(k)}|\le C\|\tilde{\scrT}-\scrT\|,
	\end{equation}
	and
	\begin{equation}
		\label{eq:odecodavis0}
		\max_{1\le q\le p}\sin\angle(\bu_k^{(q)}, \tilde{\bu}_{\pi(k)}^{(q)})\le {C\|\tilde{\scrT}-\scrT\|\over \lambda_k},
	\end{equation}
	with the convention that $1/0=+\infty$.
\end{theorem}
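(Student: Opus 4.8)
I would avoid matricization entirely and build everything on the gradient‑iteration map $G$ of Theorem~\ref{th:comp}, using three of its properties: (i)~$F_{\scrT}$ never decreases along the iteration, $F_{\scrT}(G_{\scrT}(\ba^{(1)},\ldots,\ba^{(p)}))\ge F_{\scrT}(\ba^{(1)},\ldots,\ba^{(p)})$; (ii)~in the orthonormal coordinates supplied by the frames $\bU^{(q)}$, one step of $G_{\scrT}$ sends the $l$th coordinate in mode $q$ to a common multiple of $\lambda_l\prod_{q'\ne q}(\text{$l$th coordinate in mode }q')$, so that near a fixed point $(\pm\bu_k^{(1)},\ldots,\pm\bu_k^{(p)})$ the mass lying off the $\bu_k$ directions is raised to the power $p-1$ at every step; and (iii)~a one‑step drift bound, namely that contracting $\scrE:=\tilde\scrT-\scrT$ against any unit vectors yields a vector of norm at most $\|\scrE\|$. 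Write $\eps:=\|\tilde\scrT-\scrT\|$. I would first treat the indices $k$ with $\lambda_k$ above a threshold $c\eps$ for an absolute constant $c$, processing them in decreasing order of $\lambda_k$, and dispose of the rest at the end.

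\textbf{Constructing $\pi$ and the $\sin\angle$ bound.} For a large index $k$, run the gradient iteration of $\tilde\scrT$ from the exact tuple $\ba_0=(\bu_k^{(1)},\ldots,\bu_k^{(p)})$. Since $\scrT\times_{q'\ne q}\bu_k^{(q')}=\lambda_k\bu_k^{(q)}$, property~(iii) gives $\tilde\scrT\times_{q'\ne q}\bu_k^{(q')}=\lambda_k\bu_k^{(q)}+\bw_q$ with $\|\bw_q\|\le\eps$, so one step lands within $\max_q\sin\angle(\cdot,\bu_k^{(q)})\le\eps/(\lambda_k-\eps)$ of $(\bu_k^{(q)})_q$. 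Inside a neighborhood of $(\bu_k^{(q)})_q$, property~(ii) makes the off‑$\bu_k$ mass $\eta_{n+1}$ after step $n+1$ obey $\eta_{n+1}\lesssim(\lambda_1/\lambda_k)\,\eta_n^{\,p-1}+\eps/\lambda_k$; after quotienting out the already‑identified dominant tuples $\tilde\bu_{\pi(1)},\ldots,\tilde\bu_{\pi(k-1)}$ of $\tilde\scrT$ the leading factor improves to an absolute constant, so $\eta\mapsto\eta^{\,p-1}+\eps/\lambda_k$ has a \emph{dimension‑ and order‑free} contraction threshold; summing the resulting series shows the trajectory stays $O(\eps/\lambda_k)$‑close to $(\bu_k^{(q)})_q$, is Cauchy, and converges to a fixed point of $G_{\tilde\scrT}$ there, which by Theorem~\ref{th:comp} is of the form $(\pm\tilde\bu_j^{(1)},\ldots,\pm\tilde\bu_j^{(p)})$; I set $\pi(k):=j$. (The super‑linear trapping is what lets me take the limit without assuming $\ba_0$ lies in the full‑measure set $\calU$, and what pins down the signs consistently.) This proves \eqref{eq:odecodavis0} for large $k$, and for $\lambda_k\le c\eps$ it is trivial since $\sin\angle\le1$. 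Property~(i) yields the easy half of \eqref{eq:odecoweyl0}: $\tilde\lambda_{\pi(k)}=F_{\tilde\scrT}(\text{limit})\ge F_{\tilde\scrT}(\ba_0)=|\lambda_k+\langle\scrE,\bu_k^{(1)}\otimes\cdots\otimes\bu_k^{(p)}\rangle|\ge\lambda_k-\eps$.

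\textbf{Bijectivity and the small singular values.} Since $\tilde\bu_{\pi(k)}^{(1)}$ is $O(\eps/\lambda_k)$‑close to $\bu_k^{(1)}$ and the $\bu_k^{(1)}$ over distinct large $k$ are orthonormal, distinct large indices get distinct images, so $\pi$ is injective on them; running the same construction with $\scrT$ and $\tilde\scrT$ interchanged produces a matching in the reverse direction, and comparing the two (both tensors having $d_{\min}$ singular tuples) shows $\pi$ completes to a bijection of $[d_{\min}]$ under which every $\tilde\lambda_j$ exceeding $c\eps$ is the image of some large $\lambda_k$. Hence for an index with $\lambda_k\le c\eps$ its partner $\tilde\lambda_{\pi(k)}$ is also $\le c\eps$, which gives $|\lambda_k-\tilde\lambda_{\pi(k)}|\le c\eps$, and \eqref{eq:odecodavis0} is again vacuous there.

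\textbf{The harder half of \eqref{eq:odecoweyl0}, and the main obstacle.} It remains to bound $\tilde\lambda_{\pi(k)}$ above by $\lambda_k+C\eps$. Writing $\tilde\lambda_{\pi(k)}=\langle\tilde\scrT,\tilde\bu_{\pi(k)}^{(1)}\otimes\cdots\otimes\tilde\bu_{\pi(k)}^{(p)}\rangle=\langle\scrT,\tilde\bu_{\pi(k)}^{(1)}\otimes\cdots\rangle+\langle\scrE,\tilde\bu_{\pi(k)}^{(1)}\otimes\cdots\rangle$, the last term is at most $\eps$ and the diagonal part $\lambda_k\prod_q\langle\bu_k^{(q)},\tilde\bu_{\pi(k)}^{(q)}\rangle$ equals $\lambda_k$ up to lower order by \eqref{eq:odecodavis0}; the whole difficulty sits in the spillover $\sum_{l\ne k}\lambda_l\prod_q\langle\bu_l^{(q)},\tilde\bu_{\pi(k)}^{(q)}\rangle$. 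A crude Cauchy--Schwarz bound burdens it with a spurious factor $\lambda_1/\lambda_k$, while bounding term by term burdens it with a spurious factor $d_{\min}$; \textbf{neither is admissible, and showing that this spillover is $O(\eps)$ uniformly in $\lambda_1$, $d$, and $p$ is the crux of the theorem}. The same phenomenon also governs the quotienting step above, where the directions $\tilde\bu_{\pi(1)},\ldots,\tilde\bu_{\pi(k-1)}$ are known only to within $O(\eps/\lambda_l)$ and one must show the errors they introduce do not accumulate as $k$ grows. I would attack both by playing the singular‑vector equations of $\tilde\bu_{\pi(k)}$ against each competing direction $\bu_l$ --- forcing $\langle\bu_l^{(q)},\tilde\bu_{\pi(k)}^{(q)}\rangle$ to be far smaller than the generic $\eps/\lambda_k$ whenever $\lambda_l\gtrsim\lambda_k$ --- and by exploiting the \emph{mutual orthonormality} of $\{\tilde\bu_{\pi(k)}^{(q)}:k\}$ within each mode $q$ to prevent the contributions of different $l$ from piling up. This is exactly the ``perturbation affects each singular tuple in isolation, without accumulation'' feature that matrices lack. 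With the spillover controlled, combining the two halves of the singular‑value estimate with \eqref{eq:odecodavis0} and then optimizing the threshold $c$ together with the constants produced by the trapping series gives the stated $C\le17$.
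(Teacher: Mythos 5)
Your argument stops exactly where the theorem becomes hard, so there is a genuine gap rather than a complete proof. The upper half of the singular-value bound, $\tilde{\lambda}_{\pi(k)}\le \lambda_k+C\|\tilde{\scrT}-\scrT\|$, hinges on showing that the spillover $\sum_{l\neq k}\lambda_l\prod_q\langle\bu_l^{(q)},\tilde{\bu}_{\pi(k)}^{(q)}\rangle$ is $O(\|\tilde{\scrT}-\scrT\|)$ uniformly in $\lambda_1$, $d$ and $p$, and you yourself flag this as ``the crux'' and only offer an unexecuted plan (play the singular-vector equations against each competing $\bu_l$, exploit orthonormality to prevent accumulation). The same unproved statement is silently used earlier: your claim that after ``quotienting out'' $\tilde{\bu}_{\pi(1)},\ldots,\tilde{\bu}_{\pi(k-1)}$ the factor $\lambda_1/\lambda_k$ in the recursion $\eta_{n+1}\le C(\lambda_1/\lambda_k)\eta_n^{p-1}+\eps/\lambda_k$ ``improves to an absolute constant'' is precisely the assertion that every unmatched singular value of $\tilde{\scrT}$ is at most $\lambda_k+O(\|\tilde{\scrT}-\scrT\|)$ --- a nontrivial fact you never establish. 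Without it the trapping threshold degrades with $\lambda_1/\lambda_k$, and $\|\tilde{\scrT}-\scrT\|\le c\lambda_k$ alone does not keep the iterates trapped: a tiny component along a $\tilde{\bu}_j$ with very large $\tilde{\lambda}_j$ can take over the $\tilde{\scrT}$-iteration. (Also, ``quotienting'' needs to be made precise --- the iteration does not preserve orthogonality to the matched $\tilde{\bu}_{\pi(l)}$'s, so one must deflate $\tilde{\scrT}$ itself --- and even then the needed bound on the residual spectrum is exactly the missing ingredient.) The bijectivity/completion of $\pi$ and the handling of indices with $\lambda_k\le c\eps$ also lean on this matching in both directions, so the gap propagates there too.

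For comparison, the paper never uses the dynamics of $G$: it proves the stronger Theorem \ref{th:ortho-perturb} by induction on $k$, defining $\pi(k)$ as the maximizer of $\tilde{\lambda}_j\prod_q|\langle\bu_k^{(q)},\tilde{\bu}_j^{(q)}\rangle|^{(p-2)/p}$ (extracted from the variational characterization of $\lambda_k$ via a H\"older/AM--GM step). The spillover from previously matched indices is killed by the induction hypothesis, since each such term is $\tilde{\lambda}_{\pi(l)}\cdot O((\|\tilde{\scrT}-\scrT\|/\lambda_l)^{p-2})=O(\|\tilde{\scrT}-\scrT\|)$, and the key bound $\max_{j\notin\pi([l])}\tilde{\lambda}_j\le\lambda_{l+1}+\|\tilde{\scrT}-\scrT\|$ is obtained by maximizing $\langle\tilde{\scrT},\cdot\rangle$ over the orthocomplement of the matched $\tilde{\bu}$'s and comparing with $\scrT$. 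That is exactly the machinery your two unproved claims require; Theorem \ref{th:odeco-weyl} with $C\le 17$ then follows by splitting on whether $\|\tilde{\scrT}-\scrT\|\le c_\eps\lambda_k$ and optimizing $\max\{1+\eps,1/c_\eps\}$. Your gradient-iteration route might be completable, but only after importing an argument of this type, so as written it does not prove the statement.
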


As discussed before, despite the similarity in appearance to the bounds for matrices, Theorem \ref{th:odeco-weyl} requires different proof techniques. Moreover, there are several intriguing differences between the bounds given in Theorem \ref{th:odeco-weyl} and classical ones for matrices.

First of all, we do not necessarily match the $k$th singular value/vector tuple $(\lambda_k, \bu_k^{(1)},\ldots, \bu_k^{(p)})$ of $\scrT$ with that of $\tilde{\scrT}$. This is because we do not restrict that the singular values $\lambda_k$s are distinct and sufficiently apart from each other, and hence the singular vectors corresponding to $\tilde{\lambda}_k$ are not necessarily close to those corresponding to $\lambda_k$. As a simple example, consider the following $2\times 2\times 2$ tensors:
$$
\scrT=(1+\delta)\bfe_1\otimes\bfe_1\otimes \bfe_1+(1-\delta)\bfe_2\otimes\bfe_2\otimes \bfe_2,
$$
and
$$
\tilde{\scrT}=(1-\delta)\bfe_1\otimes\bfe_1\otimes \bfe_1+(1+\delta)\bfe_2\otimes\bfe_2\otimes \bfe_2,
$$
where $\delta>0$ represents a small perturbation. Obviously, $\lambda_1=\tilde{\lambda}_1=1+\delta$ and $\lambda_2=\tilde{\lambda}_2=1-\delta$. But the correct way to study the effect of perturbation is to compare $(1+\delta)\bfe_1\otimes\bfe_1\otimes \bfe_1$ with $(1-\delta)\bfe_1\otimes\bfe_1\otimes \bfe_1$, and $(1-\delta)\bfe_2\otimes\bfe_2\otimes \bfe_2$ with $(1+\delta)\bfe_2\otimes\bfe_2\otimes \bfe_2$, and not the other way around. In other words, we want to pair $\lambda_1$ with $\tilde{\lambda}_2$, and $\lambda_2$ with $\tilde{\lambda}_1$.

Another notable difference is between the perturbation bound \eqref{eq:odecodavis0} for singular vectors and those from Wedin-Davis-Kahan $\sin\Theta$ theorems. The gap between singular values is absent in the bound \eqref{eq:odecodavis0}. This means that for higher-order odeco tensors, the perturbation affects the singular vectors separately. The perturbation bound \eqref{eq:odecodavis0} depends only on the amount of perturbation relative to their corresponding singular value.

For either \eqref{eq:odecoweyl0} or \eqref{eq:odecodavis0} to hold, we can take the constant $C=17$. It is plausible that this constant can be further improved. In general, for any such bound to hold, it is necessary that the constant $C\ge 1$ again by considering two rank-one tensors differing only in the nonzero singular value or in one of its corresponding singular vectors. Our next result shows that when the perturbation is sufficiently small, or for large enough singular values, we can indeed take $C=1$ or arbitrarily close to $1$.

\begin{theorem}\label{th:ortho-perturb}
	Let $\scrT$ and $\tilde{\scrT}$ be two $d_1\times\cdots\times d_p$ ($p>2$) odeco tensors with SVD:
	$$
	\scrT=[\{\lambda_k: 1\le k\le d_{\min}\}; \bU^{(1)},\ldots,\bU^{(p)}],
	$$
	and
	$$
	\tilde{\scrT}=[\{\tilde{\lambda}_k: 1\le k\le d_{\min}\}; \tilde{\bU}^{(1)},\ldots,\tilde{\bU}^{(p)}],
	$$
	respectively where $d_{\min}=\min\{d_1,\ldots, d_p\}$. There exists a permutation $\pi: [d_{\min}]\to [d_{\min}]$ such that for any $\eps>0$, 
	\begin{equation}
		\label{eq:odecoweyl}
		|\lambda_k-\tilde{\lambda}_{\pi(k)}|\le \|\tilde{\scrT}-\scrT\|,
	\end{equation}
	and
	\begin{equation}
		\label{eq:odecodavis}
		\max_{1\le q\le p}\sin\angle (\bu_k^{(q)},\tilde{\bu}_{\pi(k)}^{(q)})\le {(1+\eps)\|\tilde{\scrT}-\scrT\|\over \lambda_k}.
	\end{equation}
	provided that $\|\tilde{\scrT}-\scrT\|\le c_{\eps}\lambda_k$ for some constant $c_{\eps}>0$ depending on $\eps$ only.
\end{theorem}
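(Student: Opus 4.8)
Write $\eta=\|\tilde\scrT-\scrT\|$ and $\scrE=\tilde\scrT-\scrT$. The plan is to \emph{bootstrap} from Theorem~\ref{th:odeco-weyl}, which already hands us a permutation $\pi$ with $|\lambda_l-\tilde\lambda_{\pi(l)}|\le 17\eta$ and $\sin\angle(\bu_l^{(q)},\tilde\bu_{\pi(l)}^{(q)})\le 17\eta/\lambda_l$ for every $l$ and $q$; we keep this $\pi$ and sharpen both bounds at an index $k$ with $\eta\le c_\eps\lambda_k$, where $c_\eps\le 1/51$ is fixed at the end. Abbreviate $a_l^{(j)}:=\langle\bu_l^{(j)},\tilde\bu_{\pi(k)}^{(j)}\rangle$ and $s_j:=\sin\angle(\bu_k^{(j)},\tilde\bu_{\pi(k)}^{(j)})$, so $(a_k^{(j)})^2=1-s_j^2$. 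Everything follows by contracting the singular-tuple identity of $\tilde\scrT$: since $\tilde\scrT\times_{j\ne q}\tilde\bu_{\pi(k)}^{(j)}=\tilde\lambda_{\pi(k)}\tilde\bu_{\pi(k)}^{(q)}$ and $\tilde\scrT=\scrT+\scrE$,
\[
\tilde\lambda_{\pi(k)}\tilde\bu_{\pi(k)}^{(q)}=\sum_{l}\lambda_l\Big(\prod_{j\ne q}a_l^{(j)}\Big)\bu_l^{(q)}+\bxi_q,\qquad \|\bxi_q\|=\big\|\scrE\times_{j\ne q}\tilde\bu_{\pi(k)}^{(j)}\big\|\le\|\scrE\|=\eta,
\]
together with the ``mirror'' identity $\tilde\scrT\times_{j\ne q}\bu_k^{(j)}=\lambda_k\bu_k^{(q)}+\scrE\times_{j\ne q}\bu_k^{(j)}$.

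\smallskip\noindent\textbf{The spillover estimate (the crux).} The work is in bounding $\Sigma_q:=\sum_{l\ne k}\lambda_l^2\big(\prod_{j\ne q}a_l^{(j)}\big)^2$, the contribution of the other singular tuples, whose $\lambda_l$ may be far larger than $\lambda_k$; a crude bound here is exactly what costs the matricization approach its $\mathrm{poly}(d)$ factor. The saving observation: for $l\ne k$ with $\lambda_l>0$, $\tilde\bu_{\pi(k)}^{(j)}$ is orthogonal to $\tilde\bu_{\pi(l)}^{(j)}$, which lies within angle $17\eta/\lambda_l$ of $\bu_l^{(j)}$ by Theorem~\ref{th:odeco-weyl}; splitting $\bu_l^{(j)}$ along $\tilde\bu_{\pi(l)}^{(j)}$ and its orthogonal complement gives $|a_l^{(j)}|\le\min\{1,\,17\eta/\lambda_l\}$ \emph{in every mode $j$} (terms with $\lambda_l=0$ drop out). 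Fixing some $j_1\ne q$ and using this on the $p-2$ modes other than $q$ and $j_1$,
\[
\lambda_l^2\!\!\prod_{j\ne q,\,j_1}\!\!(a_l^{(j)})^2\;\le\;\min\Big\{\lambda_l^2,\ (17\eta)^{2(p-2)}\lambda_l^{\,6-2p}\Big\}\;\le\;(17\eta)^2,
\]
the last step being the elementary fact that $\min\{x^2,\,c\,x^{6-2p}\}$ is largest where the two arguments agree — at $x=17\eta$ when $c=(17\eta)^{2(p-2)}$, since $2(p-2)+(6-2p)=2$. Summing over $l\ne k$, controlling the leftover factor by Bessel, $\sum_{l\ne k}(a_l^{(j_1)})^2\le 1-(a_k^{(j_1)})^2=s_{j_1}^2$, and minimizing over $j_1\ne q$ gives $\Sigma_q\le(17\eta)^2\min_{j\ne q}s_j^2$. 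The identical argument on the mirror identity (orthonormality of the columns of $\tilde\bU^{(j)}$, the crude $\tilde\lambda_{\pi(m)}\le\lambda_m+17\eta$, Bessel for $\tilde\bU^{(j_1)}$) gives the companion $\tilde\Sigma_q:=\sum_{m\ne k}\tilde\lambda_{\pi(m)}^2\big(\prod_{j\ne q}\langle\tilde\bu_{\pi(m)}^{(j)},\bu_k^{(j)}\rangle\big)^2\le 4(17\eta)^2\min_{j\ne q}s_j^2$.

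\smallskip\noindent\textbf{Singular value bound, constant $1$.} Orthonormality of $\{\bu_l^{(q)}\}_l$ gives $\|\scrT\times_{j\ne q}\tilde\bu_{\pi(k)}^{(j)}\|^2=\lambda_k^2\prod_{j\ne q}(1-s_j^2)+\Sigma_q$. Because $\prod_{j\ne q}(1-s_j^2)\le 1-\max_{j\ne q}s_j^2$ while $\Sigma_q\le(17\eta)^2\min_{j\ne q}s_j^2\le\lambda_k^2\max_{j\ne q}s_j^2$ once $17\eta\le\lambda_k$, this norm is $\le\lambda_k$; the triangle inequality in the singular-tuple identity then gives $\tilde\lambda_{\pi(k)}\le\lambda_k+\eta$. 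Symmetrically, $\|\tilde\scrT\times_{j\ne q}\bu_k^{(j)}\|^2=\tilde\lambda_{\pi(k)}^2\prod_{j\ne q}(1-s_j^2)+\tilde\Sigma_q\le\tilde\lambda_{\pi(k)}^2$ once $2\cdot17\eta\le\tilde\lambda_{\pi(k)}$, which holds because $\tilde\lambda_{\pi(k)}\ge\lambda_k-17\eta$ and $51\eta\le\lambda_k$; with the mirror identity this yields $\tilde\lambda_{\pi(k)}\ge\lambda_k-\eta$. Together these give \eqref{eq:odecoweyl}, with exact constant $1$ and no $\eps$- or dimension-dependence.

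\smallskip\noindent\textbf{Vector bound, constant $1+\eps$, and the choice of $c_\eps$.} Project the singular-tuple identity off $\bu_k^{(q)}$: the left-hand side has norm $\tilde\lambda_{\pi(k)}s_q$, the right-hand side norm at most $\sqrt{\Sigma_q}+\eta\le 17\eta\,s+\eta$ with $s:=\max_j s_j$. Choosing $q$ to attain $s$ and using $\tilde\lambda_{\pi(k)}\ge\lambda_k-\eta$ from the previous step gives $s(\lambda_k-18\eta)\le\eta$, i.e.\ $s\le(\eta/\lambda_k)\big/(1-18\eta/\lambda_k)$, which is $\le(1+\eps)\eta/\lambda_k$ whenever $\eta/\lambda_k\le\eps/(18(1+\eps))$. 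So the theorem holds with $c_\eps:=\min\{1/51,\,\eps/(18(1+\eps))\}$, a quantity depending on $\eps$ alone. The only genuinely delicate point is the spillover estimate: $\tilde\bu_{\pi(k)}^{(j)}$ must be shown nearly orthogonal to $\bu_l^{(j)}$ \emph{simultaneously in all $p-1$ contracted modes}, each near-orthogonality inheriting the rate $17\eta/\lambda_l$ from Theorem~\ref{th:odeco-weyl}; the product of those rates, balanced by the $\min$-trick, becomes a bound free of $\lambda_l$ and — the decisive feature — dominated by the ``deficit'' $\lambda_k^2(1-\prod_{j\ne q}(1-s_j^2))$ carried by the $k$-th tuple itself. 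The rest is bookkeeping with Bessel's and the triangle inequality.
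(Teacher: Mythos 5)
Your argument is circular as it stands. You bootstrap everything from Theorem~\ref{th:odeco-weyl} — the existence of a matching permutation $\pi$ together with the uniform, gap-free, dimension-free bounds $|\lambda_l-\tilde{\lambda}_{\pi(l)}|\le 17\|\tilde{\scrT}-\scrT\|$ and $\sin\angle(\bu_l^{(q)},\tilde{\bu}_{\pi(l)}^{(q)})\le 17\|\tilde{\scrT}-\scrT\|/\lambda_l$ for \emph{every} $l$ — and these are exactly what feed your spillover estimate $|\langle\bu_l^{(j)},\tilde{\bu}_{\pi(k)}^{(j)}\rangle|\le\min\{1,17\eta/\lambda_l\}$. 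But in this paper Theorem~\ref{th:odeco-weyl} is not an independent prior result: its proof consists of deducing it \emph{from} Theorem~\ref{th:ortho-perturb} (the statement you are asked to prove), by simply handling the regime $\|\tilde{\scrT}-\scrT\|>c_\eps\lambda_k$ trivially and optimizing over $\eps$. No other route to the crude bound is available — matricization only yields bounds carrying ${\rm poly}(d)$ factors and the spectral gap, and in particular cannot produce a per-index, per-mode estimate of the form $C\eta/\lambda_l$ matched through a single permutation. So the very input of your bootstrap is the hard part of the theorem, and your proposal does not supply it.

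What you have written is, in effect, a clean self-improvement argument: \emph{given} any universal constant $C_0$ in place of $17$, your spillover-plus-Bessel computation tightens the singular value constant to $1$ and the singular vector constant to $1+\eps$ once $\|\tilde{\scrT}-\scrT\|\le c_\eps\lambda_k$, and that part of the calculation is correct (the balancing $\min\{\lambda_l^2,(C_0\eta)^{2(p-2)}\lambda_l^{6-2p}\}\le(C_0\eta)^2$, the Bessel step, and the projection off $\bu_k^{(q)}$ all check out, and the permutation is inherited so no re-matching issue arises). The paper instead proves Theorem~\ref{th:ortho-perturb} directly by induction on $k$: the basic case $k=1$ uses the variational characterization of $\lambda_1$ and $\tilde{\lambda}_1$ to \emph{construct} the matched index $\pi(1)$, and the inductive step shows that the already-matched tuples $\pi(1),\dots,\pi(l)$ contribute only an $O(\eta)\cdot(\max_q\sin\angle)^{p-1}$ spillover, which is how the permutation and the gap-free bounds come into existence in the first place (and how the stronger second-order statement \eqref{eq:secordpert}, later needed for Lemma~\ref{le:specnorm}, is obtained). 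To repair your proposal you would have to replace the appeal to Theorem~\ref{th:odeco-weyl} with an independent construction of $\pi$ and of a crude uniform bound — which is essentially the paper's induction — at which point the bootstrap becomes a constant-improvement appendix rather than a proof.
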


The dependence of $c_\eps$ on $\eps$ can also be made explicit. In particular, we can take
$$
c_{\eps}=\min\{[1+2(1+\eps)]^{-1},\,h^{-1}(\eps/(1+\eps))\}
$$
where
\begin{align}\label{eq:cpdisp}
	h(x)&=(1+x)\left[1-\left({1-x}\over{1+x}\right)^{2}\right]^{1\over 2}+(1+\eps)x{(1+x)}. 
\end{align}
When considering infinitesimal perturbation in that $\|\tilde{\scrT}-\scrT\|=o(\lambda_k)$, we can express the bound \eqref{eq:odecodavis} for singular vectors as
$$
\max_{1\le q\le p}\sin\angle (\bu_k^{(q)},\tilde{\bu}_{\pi(k)}^{(q)})\le {\|\tilde{\scrT}-\scrT\|\over \lambda_k}+o\left({\|\tilde{\scrT}-\scrT\|\over \lambda_k}\right),
$$
which is more convenient for asymptotic analysis.

%
In general, if $\lambda_k=0$ for some $k<d_{\min}$, $\bu_k^{(q)}$s are not identifiable and therefore one cannot bound the effect of perturbation on the singular vectors in a meaningful way, as Theorems \ref{th:odeco-weyl} and \ref{th:ortho-perturb} also indicate. An exception is the case when $0$ is a singular value and simple, i.e., $\lambda_k>0$ for $k=1,\dots,d_{\min}-1$ and $\lambda_{d_{\min}}=0$. In this case, we can also derive nontrivial bounds for $(\bu_{d_{\min}}^{(1)},\ldots, \bu_{d_{\min}}^{(p)})$ since $\bu_{d_{\min}}^{(q)}$s are determined by $\bu_1^{(q)}, \ldots, \bu_{d_{\min}-1}^{(q)}$ for which the perturbation effect can be bounded appropriately. In particular, Theorems \ref{th:odeco-weyl} and \ref{th:ortho-perturb} provide perturbation bounds
$$
\max_{1\le q\le p}\sin\angle(\bu_k^{(q)}, \tilde{\bu}_{\pi(k)}^{(q)})\le {C\|\tilde{\scrT}-\scrT\|\over \lambda_k}
$$
for $1\le k\le d_{\min}-1$. By orthogonality of $\bU^{(q)}$ and $\tilde{\bU}^{(q)}$, this also means we have a perturbation bound for the last singular value-vector pair:
$$
\max_{1\le q\le p}
\sin\angle(\bu_{d_{\min}}^{(q)}, \tilde{\bu}_{\pi(d_{\min})}^{(q)})
\le \dfrac{C\|\tilde{\scrT}-\scrT\|}{\lambda_{d_{\min}-1}}.
$$

\subsection{Numerical Illustration}

To further illustrate these bounds, we carried out a couple of numerical experiments following the earlier work of \cite{mu2015successive}. In the first setting, we simulated two sets of i.i.d. random orthogonal matrices $\bU^{(q)}$, and $\bar{\bU}^{(q)}$ of dimension $20\times 10$, for $q=1,2,3$. We next generated $\hat{\bU}^{(q)}$ as the matrix with columns as $\hat{\bu}^{(q)}_i=\sqrt{1-\rho^2}\bu^{(q)}_i+\rho \bar{\bu}^{(q)}_i$, for $\rho=15/\lambda$ and $i=1,\dots,10$. Then we computed the orthogonal matrices $\tilde{\bU}^{(q)}$ through the polar decomposition of $\hat{\bU}^{(q)}=\tilde{\bU}^{(q)}\bP^{(q)}$. Finally we took the two odeco tensors as $\scrT=\lambda\displaystyle\sum_{i=1}^{10}\bu_i^{(1)}\otimes\bu_i^{(2)}\otimes\bu_i^{(3)}$ and $\tilde{\scrT}=\lambda\displaystyle\sum_{i=1}^{10}\tilde{\bu}_i^{(1)}\otimes\tilde{\bu}_i^{(2)}\otimes\tilde{\bu}_i^{(3)}$. We considered $\lambda=\omega\cdot d^{3/4}$ and vary $\omega$ over the 200 values in $\{1000,1000/2,\dots,1000/199,5\}$. Each point on the plot of Figure \ref{fig:rand_ortho} corresponds to one value of $\lambda$ and one random instance of $\scrT$ and $\tilde{\scrT}$. To fix ideas, on the Y axis we plot (in the notation of Theorem \ref{th:ortho-perturb}) the values $\max\sin\angle(\bu^{(q)}_k,\,\tilde{\bu}^{(q)}_{\pi(k)})$ where the maximum is over $1\le q\le 3$ and $1\le k\le 10$. On the X axis, we have $\|\tilde{\scrT}-\scrT\|/\lambda$ where the tensor spectral norm was evaluated based on 1000 random starts followed by power iteration.

\begin{figure}[htbp]
	\centering
	\begin{minipage}{0.5\textwidth}
		\centering
		\includegraphics[width=0.9\textwidth]{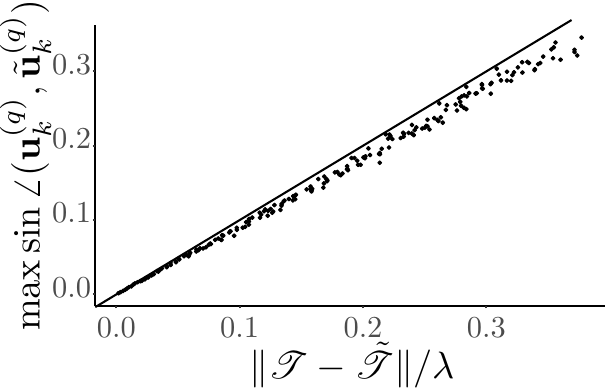}
		\caption{Random Orthogonal Tensors}\label{fig:rand_ortho}
	\end{minipage}\hfill
	\begin{minipage}{0.5\textwidth}
		\centering
		\includegraphics[width=0.9\textwidth]{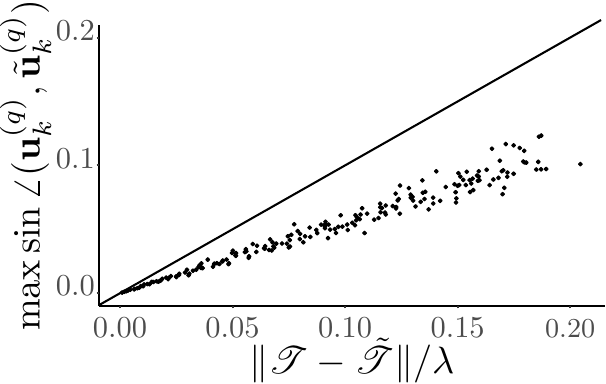} 
		\caption{Gaussian Errors}\label{fig:simul_gauss}
	\end{minipage}
\end{figure}

For reference, we add the $y=x$ line on the plot. It is evident that the maximum $\sin\angle$ distance between $\bu^{(q)}_k$ and  $\tilde{\bu}^{(q)}_{\pi(k)}$ is less than $\|\tilde{\scrT}-\scrT\|/\lambda $ on all instances, thus verifying Theorem \ref{th:ortho-perturb}. In Figure \ref{fig:rand_ortho}, the predicted bounds match almost exactly with the observed ones, showing the optimality of our bounds in this regime. 

In the second set of experiment, we took a $20\times 20\times 20$ tensor  $\scrT=\lambda\displaystyle\sum_{i=1}^{10}\be_i^{\otimes 3}+\scrE$, where the error tensor $\scrE$ consists of i.i.d. random errors $\eps_{ijk}\stackrel{iid}{\sim} N(0,1)$. We set $\lambda=\omega\cdot d^{3/4}$ and vary $\omega$ over the 200 values in $\{1000,1000/2,\dots,1000/199,5\}$. We computed an odeco approximation to $\scrX$ by random initialization followed by power iteration and successive deflation (as described in \cite{anand2014tensor} and \cite{mu2015successive}). Finally the LROAT algorithm of \cite{chen2009tensor} was used to obtain an odeco approximation $\tilde{\scrT}$.  As before, each point on the plot corresponds to one value of $\lambda$ and one random instance of $\scrE$. This numerical study can be directly compared to the simulation studies from \cite{mu2015successive}, where an algorithm-dependent perturbation bound was derived in a similar setting. Our upper bound is significantly tighter, and appears to be optimal when the perturbation is small.

\subsection{Perturbation of Nonessential Singular Vectors}
Thus far, we have focused on the essential singular values and vectors of odeco tensors. In deriving their perturbation bounds, we actually established more precise characterization of the perturbation effect on $\bu^{(q)}_k$s. See \eqref{eq:secordpert}. It turns out that we can leverage such a characterization to develop perturbation bounds for general real singular vectors of an odeco tensor. 

Denote by $r$ the rank of $\scrT$, or equivalently the number of nonzero $\lambda_k$. Write
$$
\bM^{(q)}=\left[(\tilde{\scrT}-\scrT)\times_{s\neq q}\bu_1^{(s)}\,\dots\,
(\tilde{\scrT}-\scrT)\times_{s\neq q}\bu_{d_r}^{(s)}
\right]
$$

\begin{theorem}\label{th:allsingpert} 
	Assume that
	$$
	\max_{1\le q\le p}\left\|\bM^{(q)}\right\|\le C_1\|\tilde{\scrT}-\scrT\|\qquad and\qquad 
	\|\tilde{\scrT}-\scrT\|\le C_2\lambda_rr^{-1/2(p-2)}.
	$$
	for some constants $C_1,C_2>0$. If $(\lambda;\bv^{(1)},\dots,\bv^{(p)})$ is a singular value-vector tuple of $\scrT$, then there exists a singular value/vector tuple $(\tilde{\lambda};\tilde{\bv}^{(1)},\dots,\tilde{\bv}^{(p)})$ of $\tilde{\scrT}$ such that 
	$$
	\max_{1\le q\le p}\|\bv^{(q)}-\tilde{\bv}^{(q)}\|\le \dfrac{C\|\tilde{\scrT}-\scrT\|}{\lambda_{\min}^*}
	$$
	where $\lambda_{\min}^*=\min\{\lambda_k:|\langle\bv^{(1)},\bu^{(1)}_k\rangle|>0\}$ and $C$ is a constant depending on $C_1,C_2$ and $p$ only. Here we use the convention that $\lambda_{\min}^*=0$ if  $\{\lambda_k:|\langle\bv^{(1)},\bu^{(1)}_k\rangle|>0\}=\emptyset$ and $1/0=+\infty$. 
	
	Furthermore, if $\lambda>0$, then
	$$
	\abs*{\tilde{\lambda}-\lambda}\le \dfrac{C\lambda\|\tilde{\scrT}-\scrT\|}{\lambda_{\min}^*}.
	$$
\end{theorem}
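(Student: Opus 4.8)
The engine is the explicit classification of the real singular tuples of an odeco tensor recalled in Section~2. For $\lambda>0$, a singular tuple $(\lambda;\bv^{(1)},\dots,\bv^{(p)})$ of $\scrT$ is determined by a nonempty set $S\subseteq[r]$ and signs $\chi_k^{(q)}\in\{\pm1\}$ ($k\in S$, $\prod_{q=2}^{p}\chi_k^{(q)}=1$, and say $\chi_k^{(1)}\equiv1$) through $\lambda=\bigl(\sum_{k\in S}\lambda_k^{-2/(p-2)}\bigr)^{-(p-2)/2}$ and $\bv^{(q)}=\sum_{k\in S}\chi_k^{(q)}(\lambda/\lambda_k)^{1/(p-2)}\bu_k^{(q)}$; in particular $\lambda_{\min}^{*}=\min_{k\in S}\lambda_k$ and $\sum_{k\in S}(\lambda/\lambda_k)^{2/(p-2)}=1$. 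I would match this with the singular tuple of $\tilde{\scrT}$ carrying the \emph{same} combinatorial data. Let $\pi$ be the permutation from Theorem~\ref{th:ortho-perturb}, whose hypothesis holds once $C_2$ is below an absolute constant (which may be assumed, after replacing $C_2$ by its minimum with that constant and treating the easy large-perturbation regime separately). Since $\lambda_k\ge\lambda_r$ and $|\lambda_k-\tilde\lambda_{\pi(k)}|\le C\|\tilde{\scrT}-\scrT\|$ for $k\in S$, all $\tilde\lambda_{\pi(k)}>0$, so $\pi(S)\subseteq[\tilde r]$; I then set $\tilde\lambda=\bigl(\sum_{k\in S}\tilde\lambda_{\pi(k)}^{-2/(p-2)}\bigr)^{-(p-2)/2}$ and $\tilde{\bv}^{(q)}=\sum_{k\in S}\tilde\chi_{\pi(k)}^{(q)}(\tilde\lambda/\tilde\lambda_{\pi(k)})^{1/(p-2)}\tilde{\bu}_{\pi(k)}^{(q)}$, using the pairwise sign ambiguity of the odeco decomposition of $\tilde{\scrT}$ to take $\tilde\chi_{\pi(k)}^{(q)}=\chi_k^{(q)}\sgn\langle\bu_k^{(q)},\tilde{\bu}_{\pi(k)}^{(q)}\rangle$ for every $q$ while keeping $\prod_{q=2}^{p}\tilde\chi_{\pi(k)}^{(q)}=1$. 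By the same classification applied to $\tilde{\scrT}$, $(\tilde\lambda;\tilde{\bv}^{(1)},\dots,\tilde{\bv}^{(p)})$ is a genuine singular tuple, and the two asserted bounds are claimed for it.

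The singular value bound is routine. Writing $\phi(\bmu)=\bigl(\sum_k\mu_k^{-2/(p-2)}\bigr)^{-(p-2)/2}$, so $\lambda=\phi\bigl((\lambda_k)_{k\in S}\bigr)$ and $\tilde\lambda=\phi\bigl((\tilde\lambda_{\pi(k)})_{k\in S}\bigr)$, and noting $|\lambda_k-\tilde\lambda_{\pi(k)}|\le C\|\tilde{\scrT}-\scrT\|\le\tfrac12\lambda_k$ so that every point of the segment joining the two argument vectors has $k$th coordinate comparable to $\lambda_k$, I apply the mean value theorem. The identity $\partial_k\phi(\bmu)=\phi(\bmu)^{p/(p-2)}\mu_k^{-2/(p-2)-1}$ gives $\sum_k\partial_k\phi(\bmu)\le\phi(\bmu)/\min_j\mu_j$ (bound each $\mu_k^{-1}$ by $(\min_j\mu_j)^{-1}$ inside $\sum_k\mu_k^{-2/(p-2)-1}$ and use $\phi^{p/(p-2)}\cdot\phi^{-2/(p-2)}=\phi$), so along the segment this sum is $\le C\lambda/\lambda_{\min}^{*}$ and hence $|\tilde\lambda-\lambda|\le C(\lambda/\lambda_{\min}^{*})\|\tilde{\scrT}-\scrT\|$, which is the second claim.

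For the singular vectors, fix $q$ and split $\bv^{(q)}-\tilde{\bv}^{(q)}=I+II$ with $II=\sum_{k\in S}\chi_k^{(q)}\bigl[(\lambda/\lambda_k)^{1/(p-2)}-(\tilde\lambda/\tilde\lambda_{\pi(k)})^{1/(p-2)}\bigr]\sgn\langle\bu_k^{(q)},\tilde{\bu}_{\pi(k)}^{(q)}\rangle\,\tilde{\bu}_{\pi(k)}^{(q)}$ (the coefficient mismatch) and $I=\sum_{k\in S}\chi_k^{(q)}(\lambda/\lambda_k)^{1/(p-2)}\bigl(\bu_k^{(q)}-\sgn\langle\bu_k^{(q)},\tilde{\bu}_{\pi(k)}^{(q)}\rangle\,\tilde{\bu}_{\pi(k)}^{(q)}\bigr)$ (the vector mismatch). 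Term $II$ is easy: orthonormality of $\{\tilde{\bu}_{\pi(k)}^{(q)}\}_{k\in S}$ gives $\|II\|=\bigl(\sum_{k\in S}|(\lambda/\lambda_k)^{1/(p-2)}-(\tilde\lambda/\tilde\lambda_{\pi(k)})^{1/(p-2)}|^{2}\bigr)^{1/2}$, a first-order expansion of $(x,y)\mapsto(x/y)^{1/(p-2)}$ together with $|\lambda-\tilde\lambda|\le C(\lambda/\lambda_{\min}^{*})\|\tilde{\scrT}-\scrT\|$, $|\lambda_k-\tilde\lambda_{\pi(k)}|\le C\|\tilde{\scrT}-\scrT\|$, and $\lambda_k\ge\lambda_{\min}^{*}$ bounds each summand by $(\lambda/\lambda_k)^{1/(p-2)}\cdot C\|\tilde{\scrT}-\scrT\|/\lambda_{\min}^{*}$, and $\sum_{k\in S}(\lambda/\lambda_k)^{2/(p-2)}=1$ then yields $\|II\|\le C\|\tilde{\scrT}-\scrT\|/\lambda_{\min}^{*}$.

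The main obstacle is $I$. The naive estimate $\|I\|\le\sum_{k\in S}(\lambda/\lambda_k)^{1/(p-2)}\sin\angle(\bu_k^{(q)},\tilde{\bu}_{\pi(k)}^{(q)})\le C\|\tilde{\scrT}-\scrT\|\sum_{k\in S}(\lambda/\lambda_k)^{1/(p-2)}/\lambda_k$ loses a spurious factor $r^{1/2}$ when $S$ is large with comparable $\lambda_k$, so instead I would use the sharper second-order characterization \eqref{eq:secordpert} of $\tilde{\bu}_{\pi(k)}^{(q)}$ established inside the proofs of Theorems~\ref{th:odeco-weyl}--\ref{th:ortho-perturb}, in the schematic form $\sgn\langle\bu_k^{(q)},\tilde{\bu}_{\pi(k)}^{(q)}\rangle\,\tilde{\bu}_{\pi(k)}^{(q)}=\bu_k^{(q)}+\tfrac1{\lambda_k}\Pi_k^{(q),\perp}\bm_k^{(q)}+R_k^{(q)}$, where $\bm_k^{(q)}=(\tilde{\scrT}-\scrT)\times_{s\neq q}\bu_k^{(s)}$ is the $k$th column of $\bM^{(q)}$, $\Pi_k^{(q),\perp}$ is a contraction annihilating $\bu_k^{(q)}$, and $\|R_k^{(q)}\|\le C\|\tilde{\scrT}-\scrT\|^{2}/\lambda_k^{2}$. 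Then $I=-\sum_{k\in S}w_k\Pi_k^{(q),\perp}\bm_k^{(q)}-\sum_{k\in S}\chi_k^{(q)}(\lambda/\lambda_k)^{1/(p-2)}R_k^{(q)}$ with $w_k=\chi_k^{(q)}(\lambda/\lambda_k)^{1/(p-2)}/\lambda_k$; the first sum equals $\bM^{(q)}\bw$ corrected by a combination of the orthonormal $\bu_k^{(q)}$ with coefficients $w_k\langle\bu_k^{(q)},\bm_k^{(q)}\rangle$, so with $\|\bw\|^{2}=\sum_{k\in S}(\lambda/\lambda_k)^{2/(p-2)}/\lambda_k^{2}\le(\lambda_{\min}^{*})^{-2}$, $\|\bM^{(q)}\|\le C_1\|\tilde{\scrT}-\scrT\|$, and $|\langle\bu_k^{(q)},\bm_k^{(q)}\rangle|\le\|\bM^{(q)}\|$ it has norm $\le 2C_1\|\tilde{\scrT}-\scrT\|/\lambda_{\min}^{*}$ --- the $r$-dependence is absorbed into the \emph{operator} norm of $\bM^{(q)}$ rather than a sum of $r$ angles, which is the whole point. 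For the remainder, the substitution $a_k=\lambda_k^{-1/(p-2)}$, the bound $\sum_{k}a_k^{2p-3}\le(\max_j a_j)^{2p-5}\sum_j a_j^{2}$ (valid since $p\ge3$), and Cauchy--Schwarz give $\sum_{k\in S}(\lambda/\lambda_k)^{1/(p-2)}/\lambda_k^{2}\le r^{1/2}(\lambda_{\min}^{*})^{-2}$, so its norm is $\le Cr^{1/2}\|\tilde{\scrT}-\scrT\|^{2}/(\lambda_{\min}^{*})^{2}$, and the hypothesis $\|\tilde{\scrT}-\scrT\|\le C_2\lambda_r r^{-(p-2)/2}\le C_2\lambda_{\min}^{*}r^{-(p-2)/2}$ makes $r^{1/2}\|\tilde{\scrT}-\scrT\|/\lambda_{\min}^{*}\le C_2 r^{(3-p)/2}\le C_2$ for $p\ge3$ --- which is exactly why the signal-to-noise assumption is calibrated as it is. Hence $\|I\|\le C\|\tilde{\scrT}-\scrT\|/\lambda_{\min}^{*}$, and combining with $II$ gives $\max_{q}\|\bv^{(q)}-\tilde{\bv}^{(q)}\|\le C\|\tilde{\scrT}-\scrT\|/\lambda_{\min}^{*}$. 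The case $\lambda=0$ is either vacuous (if $\bv^{(1)}$ has a component along some $\bu_k^{(1)}$ with $\lambda_k=0$, then $\lambda_{\min}^{*}=0$ and there is nothing to prove) or handled by the same construction applied to the ``flat'' singular vectors, perturbed within the kernel of $\tilde{\scrT}$.
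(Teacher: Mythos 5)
Your route is essentially the paper's: you match the singular tuple of $\tilde{\scrT}$ carrying the same support $S$ and signs, split $\bv^{(q)}-\tilde{\bv}^{(q)}$ into a basis-vector mismatch and a coefficient mismatch, handle the coefficients and the value $\tilde\lambda$ exactly as the paper does from Theorem \ref{th:ortho-perturb}, and handle the basis mismatch through \eqref{eq:secordpert} plus the operator-norm hypothesis on $\bM^{(q)}$ --- which is precisely the content of the paper's Lemma \ref{le:specnorm}, inlined (applied directly to the weighted coefficient vector rather than via a bound on $\|\bV^{(q)}_K-\tilde{\bV}^{(q)}_K\|$). Your explicit separation of the projected first-order term from the normalization correction, controlled by $|\langle\bu_k^{(q)},\bM^{(q)}\be_k\rangle|\le\|\bM^{(q)}\|$ and orthonormality, is if anything more careful than the lemma's bookkeeping.

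The one step that does not close as written is the remainder in term $I$ when $p\ge 4$. You collapse the per-column error beyond the projected first-order term into an unstructured $R_k^{(q)}$ with $\|R_k^{(q)}\|\le C\|\tilde{\scrT}-\scrT\|^2/\lambda_k^2$ and then pay $|S|^{1/2}\le r^{1/2}$, so you need $r^{1/2}\|\tilde{\scrT}-\scrT\|/\lambda_{\min}^*\lesssim 1$; you get this only by reading the hypothesis as $\|\tilde{\scrT}-\scrT\|\le C_2\lambda_r r^{-(p-2)/2}$. The calibration the paper's own Lemma \ref{le:specnorm} uses (and the weakest reading of the exponent) is $\|\tilde{\scrT}-\scrT\|\le C_2\lambda_r r^{-1/(2(p-2))}$, under which $r^{1/2}\|\tilde{\scrT}-\scrT\|/\lambda_{\min}^*$ can grow like $r^{1/2-1/(2(p-2))}$ for $p\ge 4$ (for $p=3$ the two readings coincide and your argument is complete). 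To close the gap under the weaker hypothesis you must not discard the structure of the second-order part: the normalization (Taylor) contribution to each column is a scalar multiple of $\bu_k^{(q)}$ (coefficient $O(\|\bM^{(q)}\be_k\|^2/\lambda_k^2)$) plus a scalar multiple of $\bM^{(q)}\be_k$ (coefficient $O(\|\bM^{(q)}\be_k\|/\lambda_k^2)$), so the weighted sums are bounded by $C\|\tilde{\scrT}-\scrT\|^2/(\lambda_{\min}^*)^2$ and $C_1\|\tilde{\scrT}-\scrT\|\cdot C\|\tilde{\scrT}-\scrT\|/(\lambda_{\min}^*)^2$ respectively, with no $r^{1/2}$ loss (orthonormality for the first, $\|\bM^{(q)}\|\le C_1\|\tilde{\scrT}-\scrT\|$ for the second); only the genuinely unstructured $\sin\Theta$ remainder of size $(\|\tilde{\scrT}-\scrT\|/\lambda_k)^{p-1}$ per column need pay $r^{1/2}$, and for that term $r^{1/2}(\|\tilde{\scrT}-\scrT\|/\lambda_{\min}^*)^{p-2}\le C_2^{p-2}$ is exactly what the weaker hypothesis supplies. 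With that refinement your argument matches the paper's in full strength; the brief treatment of the $\lambda=0$ case should likewise be spelled out as in the paper (same coordinates in the perturbed basis), but that is routine.
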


\section{Nearly Orthogonal Tensors}
The perturbation bounds for singular values and vectors derived in the previous section have a direct generalization to a larger class of tensors that are close to being odeco. In particular, let $\scrT_1$ and $\scrT_2$ be any two odeco approximations of a tensor $\scrX$. By triangle inequality we have
$$
\|\scrT_1-\scrT_2\|\le \|\scrT_1-\scrA\|+\|\scrT_2-\scrA\|.
$$
Now applying Theorems \ref{th:odeco-weyl} and \ref{th:ortho-perturb} one obtains perturbation bounds for the singular values and singular vectors of $\scrT_1$ and $\scrT_2$. Note that these bounds depend on the quality of approximation $\|\scrT_i-\scrA\|$. It is natural that a tensor $\scrA$ close to being odeco can be approximated better in this fashion. Finally, we do not require any optimality property of the odeco approximations. The perturbation bounds hold for any such approximation $\scrT_1$ and $\scrT_2$, although the bounds are sharper for better approximations. We now illustrate useful applications of these bounds in several more concrete settings. 

\subsection{Perturbation Bounds for Incoherent Tensors} 

Consider
\begin{equation}
	\label{eq:incoherent}
	\scrX=\sum_{k=1}^{r} \eta_k \ba_k^{(1)}\otimes\cdots\ba_k^{(p)}\quad\text{and}
	\quad
	\tilde{\scrX}=\sum_{k=1}^{\tilde{r}} \tilde{\eta}_k \tilde{\ba}_k^{(1)}\otimes\cdots\tilde{\ba}_k^{(p)}
\end{equation}
where $\eta_1\ge \ldots\ge\eta_{r}> 0$ and $\tilde{\eta}_1\ge\ldots\ge \tilde{\eta}_{\tilde{r}}>0$. Different from odeco tensors, the unit vectors $\ba_k^{(q)}$s and $\tilde{\ba}_k^{(q)}$ in \eqref{eq:incoherent} are not required to be orthonormal but assumed to be close to being orthonormal. More specifically, we shall assume that $\bA^{(q)}$s and $\tilde{\bA}^{(q)}$ satisfy the isometry condition
\begin{equation}
	\label{eq:isometry}
	1-\delta\le \min\{\lambda_{\min}(\bA^{(q)}),\,\lambda_{\min}(\tilde{\bA}^{(q)})\}\le \max\{\lambda_{\max}(\bA^{(q)}), \lambda_{\max}(\tilde{\bA}^{(q)})\}\le 1+\delta
\end{equation}
for all $q=1,\dots,p$ for some $0\le \delta<1$, where $\bA^{(q)}=[\ba_1^{(q)},\ldots,\ba_{r}^{(q)}]$, $\tilde{\bA}^{(q)}=[\tilde{\ba}_1^{(q)},\ldots,\tilde{\ba}_{\tilde{r}}^{(q)}]$ and $\lambda_{\min}(\cdot)$ and $\lambda_{\max}(\cdot)$ evaluate the smallest and largest singular values, respectively, of a matrix. Clearly $\delta=0$ if $\bA^{(q)},\,\tilde{\bA}^{(q)}$ are orthonormal so that $\delta$ measures the incoherence of its column vectors. A canonical example of incoherent tensors arises in a probabilistic setting: let $\ba_1^{(q)},\ldots, \ba_r^{(q)}$ be independently and uniformly sampled from the unit sphere; then it is not hard to see that $\delta=O_p(\sqrt{r/d_q})$.

In light of Kruskal's Theorem  \citep{kruskal1977three}, the decompositions in  \eqref{eq:incoherent} are essentially unique and therefore $\scrX$ and  $\tilde{\scrX}$ cannot be odeco unless $\delta=0$. However, $\scrX$, $\tilde{\scrX}$ are close to being odeco when $\delta$ is small. More specifically, let $\bA^{(q)}=\bU^{(q)}\bP^{(q)}$ and $\tilde{\bA}^{(q)}=\tilde{\bU}^{(q)}\tilde{\bP}^{(q)}$ be their polar decompositions, and
\begin{equation}
	\label{eq:defodecX}
	\scrT=\sum_{k=1}^{r} \eta_k \bu_k^{(1)}\otimes\cdots\otimes\bu_k^{(p)}\quad\text{and }\quad 
	\tilde{\scrT}=\sum_{k=1}^{\tilde{r}} \tilde{\eta}_k \tilde{\bu}_k^{(1)}\otimes\cdots\otimes\tilde{\bu}_k^{(p)}.
\end{equation}
It is clear that $\scrT$ and $\tilde{\scrT}$ are odeco and moreover, we can show that

\begin{theorem}
	\label{th:incoherent}
	Let $\scrX$, $\tilde{\scrX}$ be defined by \eqref{eq:incoherent} with the unit vectors $\ba_k^{(q)}$s and $\tilde{\ba}_k^{(q)}$s obeying \eqref{eq:isometry}, and $\scrT$, $\tilde{\scrT}$ by \eqref{eq:defodecX}. Then
	$$
	\|\scrT-\scrX\|\le (p+1)\delta\eta_1,\quad\text{and}\quad \|\tilde{\scrT}-\tilde{\scrX}\|\le (p+1)\delta\tilde{\eta}_1
	$$
	and
	$$
	\max\left\{\max_{1\le q\le p}\sin\angle (\ba_k^{(q)},\bu_k^{(q)}),\,\max_{1\le q\le p}\sin\angle (\tilde{\ba}_k^{(q)},\tilde{\bu}_k^{(q)})\right\}\le \delta/\sqrt{2}.
	$$
\end{theorem}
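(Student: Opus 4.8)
The plan is to reduce both assertions to linear algebra for the symmetric positive semidefinite polar factors $\bP^{(q)}$ in $\bA^{(q)}=\bU^{(q)}\bP^{(q)}$. Since $\bP^{(q)}=(\bA^{(q)\top}\bA^{(q)})^{1/2}$, its eigenvalues are precisely the singular values of $\bA^{(q)}$, so the isometry hypothesis \eqref{eq:isometry} gives $\|\bP^{(q)}-\bI\|\le\delta$ and $\bP^{(q)}\succeq(1-\delta)\bI$. Writing $\bp_k^{(q)}=\bP^{(q)}\be_k$ for the $k$th column of $\bP^{(q)}$, we have $\ba_k^{(q)}=\bU^{(q)}\bp_k^{(q)}$ and $\bu_k^{(q)}=\bU^{(q)}\be_k$, and because $\bU^{(q)}$ has orthonormal columns, $\|\bp_k^{(q)}\|=\|\ba_k^{(q)}\|=1$. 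This unit-norm constraint on the columns of $\bP^{(q)}$ is what I would use throughout.

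For the angle estimate, first note $\langle\ba_k^{(q)},\bu_k^{(q)}\rangle=\langle\bp_k^{(q)},\be_k\rangle=(\bP^{(q)})_{kk}$, which is positive since $\bP^{(q)}\succeq(1-\delta)\bI$; thus $\cos\angle(\ba_k^{(q)},\bu_k^{(q)})=(\bP^{(q)})_{kk}$. Next, $1-(\bP^{(q)})_{kk}=\tfrac12\|\bp_k^{(q)}-\be_k\|^2=\tfrac12\|(\bP^{(q)}-\bI)\be_k\|^2\le\tfrac12\delta^2$, and combining $(\bP^{(q)})_{kk}\ge1-\tfrac12\delta^2$ with $(\bP^{(q)})_{kk}\le\|\bp_k^{(q)}\|=1$ bounds $\sin^2\angle(\ba_k^{(q)},\bu_k^{(q)})=1-(\bP^{(q)})_{kk}^2$ and yields the stated bound; the bound for $\tilde{\ba}_k^{(q)},\tilde{\bu}_k^{(q)}$ is identical.

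For the norm estimate I would pass to the multilinear representation $\scrX=\mathscr{D}\times_1\bA^{(1)}\times_2\cdots\times_p\bA^{(p)}$ with diagonal core $\mathscr{D}=\sum_{k=1}^r\eta_k\,\be_k\otimes\cdots\otimes\be_k$. Substituting $\bA^{(q)}=\bU^{(q)}\bP^{(q)}$ and setting $\mathscr{C}=\mathscr{D}\times_1\bP^{(1)}\times_2\cdots\times_p\bP^{(p)}$ gives $\scrX=\mathscr{C}\times_1\bU^{(1)}\times_2\cdots\times_p\bU^{(p)}$ and $\scrT=\mathscr{D}\times_1\bU^{(1)}\times_2\cdots\times_p\bU^{(p)}$; since the $\bU^{(q)}$ have orthonormal columns this already gives $\|\scrX-\scrT\|\le\|\mathscr{C}-\mathscr{D}\|$. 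I would then telescope over modes: with $\mathscr{N}_j:=\mathscr{D}\times_1\bP^{(1)}\times_2\cdots\times_{j-1}\bP^{(j-1)}$ one has $\mathscr{C}-\mathscr{D}=\sum_{j=1}^p\mathscr{N}_j\times_j(\bP^{(j)}-\bI)$, hence $\|\mathscr{C}-\mathscr{D}\|\le\delta\sum_{j=1}^p\|\mathscr{N}_j\|$.

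The hard part is bounding $\mathscr{N}_j=\sum_k\eta_k\,\bp_k^{(1)}\otimes\cdots\otimes\bp_k^{(j-1)}\otimes\be_k\otimes\cdots\otimes\be_k$ tightly enough: the crude estimate $\|\mathscr{N}_j\|\le\eta_1\prod_{i<j}\|\bP^{(i)}\|\le\eta_1(1+\delta)^{j-1}$ sums only to $\eta_1[(1+\delta)^p-1]$, which is larger than $(p+1)\delta\eta_1$. The point is that every tensor factor appearing in $\mathscr{N}_j$ is a unit vector (the $\bp_k^{(i)}$ by the observation above, the $\be_k$ trivially), so for unit test vectors $\bx^{(1)},\dots,\bx^{(p)}$ I would bound $p-2$ of the scalar factors $\langle\bp_k^{(i)},\bx^{(i)}\rangle$, $\langle\be_k,\bx^{(i)}\rangle$ by $1$ and apply Cauchy--Schwarz in $k$ to the two remaining modes $a,b$; since the mode-$a$ factor matrix of $\mathscr{N}_j$ is $\bP^{(a)}$ when $a<j$ and $\bI$ when $a\ge j$, this gives $\langle\mathscr{N}_j,\bx^{(1)}\otimes\cdots\otimes\bx^{(p)}\rangle\le\eta_1\,\|\bP^{(a)}\bx^{(a)}\|\,\|\bP^{(b)}\bx^{(b)}\|$ with the convention $\bP^{(a)}=\bI$ for $a\ge j$. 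Choosing $a=j,\ b=j+1$ when $j\le p-1$ (both $\ge j$) yields $\|\mathscr{N}_j\|\le\eta_1$; choosing $a=p,\ b=1$ when $j=p$ yields $\|\mathscr{N}_p\|\le\|\bP^{(1)}\|\,\eta_1\le(1+\delta)\eta_1$. Summing, $\|\mathscr{C}-\mathscr{D}\|\le\delta[(p-1)\eta_1+(1+\delta)\eta_1]=(p+\delta)\delta\eta_1\le(p+1)\delta\eta_1$ since $\delta<1$, and the bound for $\tilde{\scrX},\tilde{\scrT}$ follows verbatim. So the one genuinely non-routine step is this accounting: exploiting the diagonal core and the unit-norm columns $\bp_k^{(i)}$ to absorb the near-isometry distortion through a single Cauchy--Schwarz rather than once per mode.
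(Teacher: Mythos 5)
Your bound on $\|\scrX-\scrT\|$ is correct, and it is in substance the paper's own argument, just organized in the core coordinates: the paper telescopes $\scrX-\scrT$ over the $p$ modes directly, replacing one factor $\ba_i^{(q)}$ by $\bu_i^{(q)}$ at a time, and controls each term by $\|\bA^{(q)}-\bU^{(q)}\|\le\delta$ plus one Cauchy--Schwarz in the summation index (using that all remaining factors are at most $1$ and that $\sum_i\langle \bx,\ba_i^{(q)}\rangle^2\le 1+\delta$); you telescope the diagonal core against the polar factors $\bP^{(q)}$ and apply the same Cauchy--Schwarz in $k$, absorbing at most one factor $\|\bP^{(q)}\bx\|\le 1+\delta$. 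Your unit-norm observation $\|\bp_k^{(q)}\|=\|\ba_k^{(q)}\|=1$ plays exactly the role of the paper's bounds $|\langle\bx^{(q)},\ba_i^{(q)}\rangle|\le1$, and both routes land on $(p+\delta)\delta\eta_1\le(p+1)\delta\eta_1$. So this part is fine, and not really a different proof.

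The one genuine gap is the last step of the angle bound. From $(\bP^{(q)})_{kk}\ge 1-\delta^2/2$ and $(\bP^{(q)})_{kk}\le 1$ you get $\sin^2\angle(\ba_k^{(q)},\bu_k^{(q)})=\bigl(1-(\bP^{(q)})_{kk}\bigr)\bigl(1+(\bP^{(q)})_{kk}\bigr)\le \delta^2$, i.e.\ $\sin\angle\le\delta$ (at best $\delta\sqrt{1-\delta^2/4}$), not the stated $\delta/\sqrt{2}$; the claim that your displayed inequalities ``yield the stated bound'' does not follow. In fact no argument can reach $\delta/\sqrt{2}$: take $r=d=2$, $p=3$, $\ba_1^{(q)}=\be_1$ and $\ba_2^{(q)}=(c,\sqrt{1-c^2})^\top$ for all $q$; the smallest $\delta$ admissible in \eqref{eq:isometry} is $1-\sqrt{1-c}$, while a direct computation of the polar factor gives $\sin^2\angle(\ba_1^{(q)},\bu_1^{(q)})=(1-\sqrt{1-c^2})/2$, so the ratio $\sin\angle/\delta$ tends to $1$ as $c\to 0$ (e.g.\ $c=0.2$ gives $\sin\angle\approx 0.1005>\delta/\sqrt{2}\approx 0.0747$). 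The paper's own proof has the same slip at this point: it asserts $\langle\ba_j^{(q)},\bu_j^{(q)}\rangle^2\ge 1-\|\bA^{(q)}-\bU^{(q)}\|^2/2$, whereas only $\ge 1-\|\bA^{(q)}-\bU^{(q)}\|^2$ is valid. So you should simply state your conclusion with constant $1$, i.e.\ $\max_q\sin\angle(\ba_k^{(q)},\bu_k^{(q)})\le\delta$, which is what your (and the paper's) computation actually proves and is all that is needed downstream, e.g.\ in Corollary \ref{co:incoherent}.
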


Theorem \ref{th:incoherent} reveals that there exist odeco approximations $\scrT$ and $\tilde{\scrT}$ such that  $\|\scrX-\scrT\|\le (p+1)\delta\eta_1$ and $\|\tilde{\scrX}-\tilde{\scrT}\|\le (p+1)\delta\tilde{\eta}_1$ respectively. This is conjunction with Theorem \ref{th:odeco-weyl} implies a perturbation bound for the components of $\scrX$ and $\tilde{\scrX}$.

\begin{corollary}
	\label{co:incoherent}
	Let $\scrX$ and $\tilde{\scrX}$ be defined by \eqref{eq:incoherent} with the unit vectors $\ba_k^{(q)}$s and $\tilde{\ba}_k^{(q)}$ obeying \eqref{eq:isometry}. Then there exist a numerical constant $C>0$ and a permutation $\pi: [d_{\min}]\to [d_{\min}]$ such that for any $1\le k\le r$,
	$$
	|\eta_k-\tilde{\eta}_{\pi(k)}|\le C[(p+1)\delta(\eta_1+\tilde{\eta}_1)+\|\scrX-\tilde{\scrX}\|]
	$$
	and
	$$
	\max_{1\le q\le p}\sin\angle (\ba_k^{(q)},\tilde{\ba}_{\pi(k)}^{(q)})\le C\{(p+1)\delta(\eta_1+\tilde{\eta}_1)+\|\scrX-\tilde{\scrX}\|+\delta\}/\eta_k.
	$$
\end{corollary}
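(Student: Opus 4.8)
The plan is to combine Theorem~\ref{th:incoherent} with Theorem~\ref{th:odeco-weyl} and the triangle inequality. First I would invoke Theorem~\ref{th:incoherent} to produce the odeco approximations $\scrT$ and $\tilde{\scrT}$ defined by \eqref{eq:defodecX}, which satisfy $\|\scrT-\scrX\|\le (p+1)\delta\eta_1$ and $\|\tilde{\scrT}-\tilde{\scrX}\|\le (p+1)\delta\tilde{\eta}_1$. By the triangle inequality this gives
$$
\|\scrT-\tilde{\scrT}\|\le \|\scrT-\scrX\|+\|\scrX-\tilde{\scrX}\|+\|\tilde{\scrX}-\tilde{\scrT}\|\le (p+1)\delta(\eta_1+\tilde{\eta}_1)+\|\scrX-\tilde{\scrX}\|.
$$
Since $\scrT$ and $\tilde{\scrT}$ are odeco with essential singular values $\eta_k$ and $\tilde{\eta}_k$ and essential singular vectors $\bu_k^{(q)}$ and $\tilde{\bu}_k^{(q)}$, applying Theorem~\ref{th:odeco-weyl} yields a numerical constant $C_0\le 17$ and a permutation $\pi$ with $|\eta_k-\tilde{\eta}_{\pi(k)}|\le C_0\|\scrT-\tilde{\scrT}\|$ and $\max_{1\le q\le p}\sin\angle(\bu_k^{(q)},\tilde{\bu}_{\pi(k)}^{(q)})\le C_0\|\scrT-\tilde{\scrT}\|/\eta_k$. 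Substituting the bound on $\|\scrT-\tilde{\scrT}\|$ immediately gives the claimed singular-value bound with $C=C_0$.

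For the singular-vector bound, the remaining step is to pass from the odeco vectors $\bu_k^{(q)}$, $\tilde{\bu}_k^{(q)}$ back to the original (non-orthonormal) components $\ba_k^{(q)}$, $\tilde{\ba}_{\pi(k)}^{(q)}$. Here I would use the second conclusion of Theorem~\ref{th:incoherent}, namely $\sin\angle(\ba_k^{(q)},\bu_k^{(q)})\le \delta/\sqrt2$ and $\sin\angle(\tilde{\ba}_{\pi(k)}^{(q)},\tilde{\bu}_{\pi(k)}^{(q)})\le \delta/\sqrt2$, together with the triangle-type inequality for angles, $\angle(\ba_k^{(q)},\tilde{\ba}_{\pi(k)}^{(q)})\le \angle(\ba_k^{(q)},\bu_k^{(q)})+\angle(\bu_k^{(q)},\tilde{\bu}_{\pi(k)}^{(q)})+\angle(\tilde{\bu}_{\pi(k)}^{(q)},\tilde{\ba}_{\pi(k)}^{(q)})$, and the subadditivity of $\sin$ on $[0,\pi/2]$ (or simply $\sin\angle(\bx,\bz)\le \sin\angle(\bx,\by)+\sin\angle(\by,\bz)$ up to an absolute constant on the relevant range). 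This contributes an extra additive $\sqrt2\,\delta$ term. Collecting terms, and noting $\eta_k\le \eta_1$ so that the extra $\delta$ can be written as $\delta\le (\sqrt2\,\delta/\eta_k)\cdot\eta_k$, gives
$$
\max_{1\le q\le p}\sin\angle(\ba_k^{(q)},\tilde{\ba}_{\pi(k)}^{(q)})\le \frac{C_0[(p+1)\delta(\eta_1+\tilde{\eta}_1)+\|\scrX-\tilde{\scrX}\|]}{\eta_k}+\sqrt2\,\delta\le \frac{C\{(p+1)\delta(\eta_1+\tilde{\eta}_1)+\|\scrX-\tilde{\scrX}\|+\delta\}}{\eta_k},
$$
for a suitable numerical constant $C$, as claimed.

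The only genuinely delicate point is the angle-addition step: the inequality $\sin\angle(\bx,\bz)\le \sin\angle(\bx,\by)+\sin\angle(\by,\bz)$ is valid when all three angles lie in $[0,\pi/2]$, but $\angle(\bx,\bz)$ can in principle exceed $\pi/2$ even when the two summands are small, in which case one replaces the sign of one representative and works with the acute angle; since all the summand angles here are $O(\delta)$ or $O(\|\scrX-\tilde\scrX\|/\eta_k)$, one is safely in the regime where the bound is meaningful only when the right-hand side is $\le 1$, and the inequality can be absorbed into the constant $C$ otherwise. Everything else is bookkeeping with the triangle inequality and the already-established Theorems~\ref{th:incoherent} and \ref{th:odeco-weyl}; no new estimate on tensor spectral norms is needed because the counterexample-driven difficulties have already been handled inside Theorem~\ref{th:odeco-weyl}.
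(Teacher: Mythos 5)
Your proposal is correct and follows essentially the same route as the paper: Theorem~\ref{th:incoherent} plus the triangle inequality to bound $\|\scrT-\tilde{\scrT}\|$, Theorem~\ref{th:odeco-weyl} applied to the odeco surrogates, and then the bound $\sin\angle(\ba_k^{(q)},\bu_k^{(q)})\le\delta/\sqrt2$ together with the sine triangle inequality to return to the $\ba_k^{(q)}$'s. Your absorption of the extra $\sqrt2\,\delta$ term is legitimate for exactly the reason you indicate ($\eta_k\le\eta_1$ makes $\sqrt2\,\delta\le(p+1)\delta(\eta_1+\tilde\eta_1)/\eta_k$), and since the paper's convention puts all angles in $[0,\pi/2]$, the sine triangle inequality applies directly without the sign caveat you raise.
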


We want to point out that Corollary \ref{co:incoherent} can also be viewed as a ``robust'' version of Theorem \ref{th:odeco-weyl} as the latter can be viewed as a special case of the former when $\delta=0$. 

\subsection{Additive Perturbation of Odeco Tensor}

The perturbation bounds we derived are fairly general and can be applied to various problems in statistics and machine learning. For example, in a typical spectral learning scenario, we observe $\scrX$, which is  $\scrT$ ``contaminated'' by an additive perturbation $\scrE$ and want to infer from $\scrX$ the $\bu_k^{(q)}$s. See, e.g., \cite{janzamin2019spectral}. In general, $\scrX=\scrT+\scrE$ is no longer odeco, and we may not be able to define its SVD in the same fashion as \eqref{eq:svd}. However, when $\|\scrE\|\le \eps$,  any $\eps$-odeco approximation of $\scrX$ is necessarily close to $\scrT$ as well and its singular values and vectors to those of $\scrT$. More precisely, we have the following result as an immediate consequence of Theorem \ref{th:odeco-weyl}.

\begin{corollary}
	\label{co:nearly}
	Let
	$$
	\scrT=[\{\lambda_k: 1\le k\le d_{\min}\}; \bU^{(1)},\ldots,\bU^{(p)}],
	$$
	be an odeco tensor ($p\ge 3$), and
	$$
	\tilde{\scrT}=[\{\tilde{\lambda}_k: 1\le k\le d\}; \tilde{\bU}^{(1)},\ldots,\tilde{\bU}^{(p)}]
	$$
	be any odeco approximation to $\scrX:=\scrT+\scrE$. Then there are a numerical constant $C\ge 1$ and a permutation $\pi: [d]\to [d]$ such that
	\begin{equation}
		\label{eq:coweyl}
		|\lambda_k-\tilde{\lambda}_{\pi(k)}|\le C(\|\tilde{\scrT}-\scrX\|+\|\scrE\|),
	\end{equation}
	and
	\begin{equation}
		\label{eq:codavis}
		\max_{1\le q\le p}\sin\angle (\bu_k^{(q)},\tilde{\bu}_{\pi(k)}^{(q)})\le {C(\|\tilde{\scrT}-\scrX\|+\|\scrE\|)\over \lambda_k}
	\end{equation}
	for all $k=1,\ldots, d_{\min}$.
\end{corollary}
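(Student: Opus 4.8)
The plan is to deduce the statement directly from Theorem \ref{th:odeco-weyl} via the triangle inequality; the corollary is genuinely immediate once one records that $\scrX-\scrT=\scrE$. Concretely, for any odeco approximation $\tilde{\scrT}$ of $\scrX:=\scrT+\scrE$ one has
$$
\|\tilde{\scrT}-\scrT\|\le\|\tilde{\scrT}-\scrX\|+\|\scrX-\scrT\|=\|\tilde{\scrT}-\scrX\|+\|\scrE\|,
$$
so the pair $(\scrT,\tilde{\scrT})$ is a pair of odeco tensors whose spectral distance is controlled by the quantity $\|\tilde{\scrT}-\scrX\|+\|\scrE\|$ appearing on the right-hand side of \eqref{eq:coweyl} and \eqref{eq:codavis}.

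Next I would simply invoke Theorem \ref{th:odeco-weyl} applied to $(\scrT,\tilde{\scrT})$, which are both odeco of order $p\ge 3$ and of the same format (since $\tilde{\scrT}$ approximates $\scrX$, which has the dimensions of $\scrT$). This produces a numerical constant $1\le C\le 17$ and a permutation $\pi:[d_{\min}]\to[d_{\min}]$ such that $|\lambda_k-\tilde{\lambda}_{\pi(k)}|\le C\|\tilde{\scrT}-\scrT\|$ and $\max_{1\le q\le p}\sin\angle(\bu_k^{(q)},\tilde{\bu}_{\pi(k)}^{(q)})\le C\|\tilde{\scrT}-\scrT\|/\lambda_k$ for all $k$, with the convention $1/0=+\infty$. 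Substituting the triangle-inequality bound for $\|\tilde{\scrT}-\scrT\|$ into these two inequalities yields \eqref{eq:coweyl} and \eqref{eq:codavis} with the same constant $C$, which completes the argument.

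There is essentially no technical obstacle; all the real content sits in Theorem \ref{th:odeco-weyl}. The only items deserving a sentence of care are bookkeeping points: one should make clear that $\pi$ acts on $[d_{\min}]$, matching the indexing of the essential singular values of $\scrT$, and that \eqref{eq:codavis} is vacuous when $\lambda_k=0$, exactly as in Theorem \ref{th:odeco-weyl}. If a constant arbitrarily close to $1$ were desired one would instead invoke Theorem \ref{th:ortho-perturb} under the extra smallness hypothesis $\|\tilde{\scrT}-\scrX\|+\|\scrE\|\le c_\eps\lambda_k$, but for the stated corollary the crude constant $C\le 17$ suffices.
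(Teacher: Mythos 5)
Your proposal is correct and matches the paper's intended argument: the corollary is stated as an immediate consequence of Theorem \ref{th:odeco-weyl}, obtained precisely by bounding $\|\tilde{\scrT}-\scrT\|\le\|\tilde{\scrT}-\scrX\|+\|\scrE\|$ via the triangle inequality and then invoking that theorem for the pair of odeco tensors $(\scrT,\tilde{\scrT})$. Your bookkeeping remarks about the permutation acting on $[d_{\min}]$ and the convention when $\lambda_k=0$ are consistent with the paper as well.
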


While Corollary \ref{co:nearly} holds for any odeco approximation to $\scrX$, it is oftentimes of interest to ``estimate'' the singular values and vectors of $\scrT$ using a ``good'' odeco approximation. In particular, one may consider the best odeco approximation:
$$
\tilde{\scrT}^{\rm best}:=\inf_{\scrA{\rm \ is\ odeco}}\|\scrX-\scrA\|.
$$
It is clear that
$$
\|\tilde{\scrT}^{\rm best}-\scrX\|\le \|\scrT-\scrX\|=\|\scrE\|,
$$
where $\tilde{\scrT}^{\rm best}=[\{\tilde{\lambda}_k: 1\le k\le d\}; \tilde{\bU}^{(1)},\ldots,\tilde{\bU}^{(p)}]$. The bounds \eqref{eq:coweyl} and \eqref{eq:codavis} now become:
\begin{equation}
	\label{eq:coweyl1}
	|\lambda_k-\tilde{\lambda}_{\pi(k)}|\le C\|\scrE\|,
\end{equation}
and
\begin{equation}
	\label{eq:codavis1}
	\max_{1\le q\le p}\sin\angle (\bu_k^{(q)},\tilde{\bu}_{\pi(k)}^{(q)})\le {C\|\scrE\|\over \lambda_k}.
\end{equation}
Indeed \eqref{eq:coweyl1} and \eqref{eq:codavis1} continue to hold for any odeco approximation $\tilde{\scrT}$ obeying
\begin{equation}
	\label{eq:odecoappr}
	\|\tilde{\scrT}-\scrX\|\lessim \|\scrE\|.
\end{equation}

It is worth noting that computing an odeco approximation that satisfies \eqref{eq:odecoappr} is not always straightforward.  In fact, development of efficient algorithms that can produce ``good'' odeco approximation to a nearly odeco tensor is an active research area with fervent interest. A flurry of recent works suggest that finding a $\scrX^{\rm odeco}$ satisfying \eqref{eq:odecoappr} is feasible at least when $\|\scrE\|$ is sufficiently small. Interested readers are referred to \cite{anand2014tensor, mu2015successive,mu2017greedy, belkin2018eigenvectors} and references therein for more detailed discussions regarding this aspect.

\subsection{High Dimensional Tensor SVD}
\label{sec:app}
A particularly common type of perturbation $\scrE$ is a noisy tensor whose entries are independent standard normal random variables, in particular when $d_j$s are large. The so-called tensor SVD problem has been studied earlier by \cite{richard2014statistical,liu2017characterizing,zhang2018tensor} among others, and is among the most commonly used methods to reduce the dimensionality of the data, and oftentimes serves as a useful first step to capture the essential features in the data for downstream analysis.

As noted before, a natural estimate of $\scrT$ is the best odeco approximation to $\scrX$:
\begin{equation}
	\label{eq:defhatTsvd}
	\hat{\scrT}=[\{\hat{\lambda}_k: 1\le k\le d_{\min}\}; \hat{\bU}^{(1)},\ldots,\hat{\bU}^{(p)}]:=\argmin_{\scrA {\rm \ is\ odeco}}\|\scrX-\scrA\|.
\end{equation}
A standard argument yields $\|\scrE\|=O_p(\sqrt{d_1+\cdots+d_p})$. See, e.g., \cite{raskutti2019convex}. Together with Corollary \ref{co:nearly}, this implies that there exists a permutation $\pi: [d_{\min}]\to [d_{\min}]$ so that
\begin{equation}
	\label{eq:svdbd1}
	\EE \max_{1\le k\le d_{\min}}|\hat{\lambda}_{\pi(k)}-\lambda_k|\le C\cdot\sqrt{d_1+\cdots+d_p},
\end{equation}
and
\begin{equation}
	\label{eq:svdbd2}
	\EE\max_{1\le q\le p} \sin\angle(\bu_k^{(q)},\hat{\bu}_{\pi(k)}^{(q)})\le C\cdot \min\left\{{\sqrt{d_1+\ldots+d_p}\over \lambda_k},1\right\},
\end{equation}
for any $k=1,\ldots,d_{\min}$.

Bounds similar to those given by \eqref{eq:svdbd2} are known when $\scrT$ is of rank one, that is, $\lambda_2=\cdots=\lambda_{d_{\min}}=0$. See, e.g., \cite{richard2014statistical}. \eqref{eq:svdbd2} indicates that the same bounds hold uniformly over all singular values and vectors of an odeco tensor. In other words, we can estimate any singular value and vectors of $\scrT$ at the same rate as if all other singular values are zero or equivalently as in the rank one case. This also draws contrast with the setting considered by \cite{zhang2018tensor}. Generalizing the rank-one model of \cite{richard2014statistical}, \cite{zhang2018tensor} studies efficient estimation strategies of $\scrT$ and its decomposition when it is of low multilinear ranks. Their analysis requires that $\scrT$ is nearly cubic, e.g., $d_1\asymp d_2\asymp d_3$, and the ranks are of an order up to $d_1^{1/2}$ among other conditions. Odeco tensors have more innate structure and consequently, as \eqref{eq:svdbd1} and \eqref{eq:svdbd2} show, if $\scrT$ is odeco, its shape and rank are irrelevant for estimating its singular values and vectors.

Note that if instead we use the perturbation bounds derived from matricization, the factor $\sqrt{d_1+\cdots+d_p}$ on the righthand side of \eqref{eq:svdbd1} and \eqref{eq:svdbd2} becomes $\sqrt{d_1\times\cdots\times d_{p-1}}$ which can be significantly larger. Indeed, both bounds \eqref{eq:svdbd1} and \eqref{eq:svdbd2} can be shown to be minimax optimal in that no other estimates of the singular vectors or values based upon $\scrX$ could attain a faster rate of convergence, and therefore characterize the exact effect of dimensionality on our ability to infer $\lambda_k$s or $\bu_k$s.

\begin{theorem}
	\label{th:tpcalower}
	Consider the tensor SVD model $\scrX=\scrT+\scrE$ where
	$$
	\scrT=[\{\lambda_k: 1\le k\le d_{\min}\}; \bU^{(1)},\ldots,\bU^{(p)}]
	$$
	is odeco and $\scrE$ has independent standard normal entries. Then there exists a constant $c>0$ such that
	\begin{equation}
		\label{eq:svdbd3}
		\inf_{\tilde{\lambda}_k}\sup_{\bu_k^{(q)}\in \calS^{d_q-1}: 1\le q\le p}\EE |\tilde{\lambda}_k-\lambda_k|\ge c\cdot\sqrt{d_1+\cdots+d_p},
	\end{equation}
	and
	\begin{equation}
		\label{eq:svdbd4}
		\inf_{\tilde{\bu}_k^{(1)},\ldots,\tilde{\bu}_k^{(p)}}\sup_{\bu_k^{(q)}\in \calS^{d_q-1}: 1\le q\le p}\EE\max_{1\le q\le p} \sin\angle(\bu_k^{(q)},\tilde{\bu}_k^{(q)})\ge c\cdot\min\left\{{\sqrt{d_1+\ldots+d_p}\over \lambda_k},1\right\},
	\end{equation}
	where the infimum in \eqref{eq:svdbd3} and \eqref{eq:svdbd4} is taken over all estimates of the form $\hat{\scrT}=\sum_{k=1}^{d_{\min}}\hat{\lambda}_k\tilde{\bu}_k^{(1)}\otimes\dots\otimes\tilde{\bu}_k^{(p)}$, (not necessarily odeco) based on observing $\scrX$.
\end{theorem}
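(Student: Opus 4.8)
The plan is to obtain both \eqref{eq:svdbd3} and \eqref{eq:svdbd4} by reducing the general odeco model to a \emph{rank-one tensor PCA} instance and then invoking information-theoretic lower bounds of Le Cam / Fano type for that instance. Since a lower bound is only helped by shrinking the parameter set, I would fix the index $k$ and restrict the supremum to a sub-family $\Theta'$ of odeco tensors in which the only ``free'' component is the $k$-th one. Concretely, in $\Theta'$ I take the first $k-1$ singular values to be a fixed, strictly decreasing sequence $\mu_1>\dots>\mu_{k-1}>\lambda_k$ with a large, well-separated spacing, the $k$-th singular value equal to the target value $\lambda_k$, and $\mu_{k+1}=\dots=0$; the first $k-1$ singular vectors are forced to stay orthonormal to the $k$-th ones (e.g.\ by Gram--Schmidt off fixed anchors), while $\bu_k^{(1)},\dots,\bu_k^{(p)}$ range over (a subset of) their spheres. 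Because the $\mu_j$'s are far above the noise level and mutually separated, the labels of the output $\hat\scrT=\sum_j\hat\lambda_j\tilde\bu_j^{(1)}\otimes\dots\otimes\tilde\bu_j^{(p)}$ are pinned down, so the permutation-free comparison in the statement is legitimate, and after accounting for the first $k-1$ (essentially known) components the residual is, up to negligible error, a rank-one instance $\scrX'=\lambda_k\,\bu_k^{(1)}\otimes\dots\otimes\bu_k^{(p)}+\scrE'$ with $\scrE'$ standard Gaussian on an ambient space of dimension proportional to $d_1+\dots+d_p$.

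For the singular-value bound \eqref{eq:svdbd3} I would run Le Cam's two-point method inside this rank-one instance, comparing $\lambda_k=0$ against $\lambda_k=a$ with $a\asymp\sqrt{d_1+\dots+d_p}$, where under the alternative the direction tuple $(\bu_k^{(1)},\dots,\bu_k^{(p)})$ carries a (mildly truncated) uniform prior. The key computation is the conditional second moment / $\chi^2$-divergence between the resulting Gaussian mixture and the null: it reduces to $\mathbb E\exp\!\big(a^2\prod_q\langle\bfg^{(q)},\bfg'^{(q)}\big\rangle)$ for independent uniform unit vectors $\bfg^{(q)},\bfg'^{(q)}$, which stays bounded once $a$ is a small enough constant multiple of $\sqrt{\sum_q d_q}$ and the prior is truncated away from the near-collision event $\bfg^{(q)}\approx\pm\bfg'^{(q)}$ (this truncation is essential for $p\ge3$, where the raw $\chi^2$ diverges). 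Indistinguishability of the two mixtures then forces $\mathbb E|\tilde\lambda_k-\lambda_k|\gtrsim\sqrt{\sum_q d_q}$, which is \eqref{eq:svdbd3}.

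For the singular-vector bound \eqref{eq:svdbd4} I would split into two regimes. When $\lambda_k\lesssim\sqrt{\sum_q d_q}$ I keep a genuinely random direction prior: since $\|\scrX\|$ is then dominated by $\|\scrE\|\asymp\sqrt{\sum_q d_q}$, no estimator can have nontrivial correlation with a random $\bu_k^{(q)}$, giving $\mathbb E\max_q\sin\angle(\bu_k^{(q)},\tilde\bu_k^{(q)})\gtrsim1\asymp\min\{\sqrt{\sum_q d_q}/\lambda_k,\,1\}$. When $\lambda_k\gtrsim\sqrt{\sum_q d_q}$ I use a local prior, writing $\bu_k^{(q)}=\sqrt{1-\tau^2}\,\bfv_0^{(q)}+\tau\,\bfg^{(q)}$ with $\bfv_0^{(q)}$ fixed, $\bfg^{(q)}$ uniform on a fixed hyperplane, and $\tau\asymp\sqrt{\sum_q d_q}/\lambda_k$; a packing of this neighborhood of log-cardinality $\asymp\sum_q d_q$ with pairwise Kullback--Leibler divergences $\asymp\lambda_k^2\tau^2\asymp\sum_q d_q$ feeds a Fano (or, equivalently, a van Trees / Bayesian Cram\'er--Rao) argument that no estimator can push $\mathbb E\max_q\sin\angle$ below $c\tau$. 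Combining the two regimes through $\min\{\cdot,1\}$ yields \eqref{eq:svdbd4}; note that none of these arguments uses orthogonal decomposability of the competing estimator, so they apply to all estimators of the stated form.

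The step I expect to be the crux is the reduction itself for \emph{general} $k$: one must calibrate the auxiliary singular values $\mu_1,\dots,\mu_{k-1}$ so that simultaneously (i) the first $k-1$ components are recovered accurately enough that the induced label ordering is unambiguous and their presence does not trivialize the residual problem, (ii) the effective ambient dimension of the residual rank-one instance stays proportional to $d_1+\dots+d_p$ rather than collapsing, and (iii) the propagation of estimation error from the auxiliary components into the estimate of $\bu_k^{(q)}$ cannot be exploited to beat the claimed rate. The two rank-one lower bounds invoked above (the truncated second-moment computation for the value, and the local Fano/van Trees bound for the vector) are, by contrast, essentially standard once the reduction is in place.
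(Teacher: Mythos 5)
Your rank-one core is sound, but note that the paper's own proof is essentially a two-line reduction: it restricts the supremum to the special case $\lambda_{k'}=0$ for all $k'\neq k$ and then quotes the rank-one minimax lower bound of \cite{zhang2018tensor}, which already gives $c\cdot\min\{\sqrt{d_1+\cdots+d_p}/\lambda_k,1\}$ for the singular vectors, with the singular-value bound following by the same argument. So your truncated second-moment two-point bound for the value and your local packing/Fano bound for the vectors amount to a self-contained reproof of the cited ingredient; they are standard and fine, except that in the low-SNR regime the justification ``$\|\scrX\|$ is dominated by $\|\scrE\|$'' is not by itself an argument and should be replaced by a global packing of the sphere plus Fano (pairwise KL $\lesssim\lambda_k^2\ll d_1+\cdots+d_p$, packing of log-cardinality $\asymp d_1+\cdots+d_p$).

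The genuine gap is the reduction you yourself flag as the crux, and it is not merely a calibration issue. If you fix $\mu_1>\cdots>\mu_{k-1}$ together with their singular vectors, orthogonality forces the free vector $\bu_k^{(q)}$ into the orthocomplement of $k-1$ known vectors, i.e.\ a sphere of dimension $d_q-k$; for $k$ comparable to $d_{\min}$ (say $d_1=\cdots=d_p=d$ and $k=d$) this leaves $O(1)$ free directions per mode, so no sub-family of your form can yield a lower bound of order $\sqrt{d_1+\cdots+d_p}/\lambda_k$ --- your requirement (ii) cannot be met by this construction. Moreover, the label-pinning machinery addresses a non-problem: the loss in the theorem refers to slot $k$ of the estimator directly (there is no permutation in the loss), and in a minimax lower bound the sub-family is known to the estimator, so nothing needs to be pinned. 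The clean fixes are either (a) the paper's route: set all other singular values to zero so that $\bu_k^{(q)}$ ranges over the full sphere $\calS^{d_q-1}$, treating the indexing of the zero (non-identifiable) components as a convention; or (b) if you insist on respecting the ordering $\lambda_1\ge\cdots\ge\lambda_k$, set $\lambda_1=\cdots=\lambda_k$ and let the whole frame vary, e.g.\ perturb by near-identity rotations acting on two-dimensional planes containing $\bu_k^{(q)}$, which moves $\bu_k^{(q)}$ in roughly $d_q$ directions while keeping the KL of order $\lambda_k^2\tau^2$ per mode, after which your Fano argument goes through at the claimed rate.
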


Theorem \ref{th:tpcalower} again confirms that our perturbation bounds is optimal at least up to a numerical constant.
\section{Proofs}

\begin{proof}[Proof of Proposition \ref{pr:example}]
	For brevity, we shall assume that $\lambda=1$. For any $\bx^{(1)}\in \calS^{d-1}$, there exists $\bc=(c_1,\ldots, c_d)^\top \in \calS^{d-1}$ such that
	$$
	\bx^{(1)}=c_1\bu_1^{(1)}+\ldots+c_d\bu_d^{(1)}.
	$$
	Therefore
	\begin{eqnarray*}
		&&\min_{\bx^{(1)}\in \calS^{d-1}}\max_{\bx^{(2)},\ldots, \bx^{(p)}\in \calS^{d-1}}\langle \scrT, \bx^{(1)}\otimes\cdots\otimes\bx^{(p)}\rangle\\
		&=&\min_{\bc\in \calS^{d-1}}\max_{\bx^{(2)},\ldots, \bx^{(p)}\in \calS^{d-1}}\langle \scrT, (c_1\bu_1^{(1)}+\ldots+c_d\bu_d^{(1)})\otimes\bx^{(2)}\otimes\cdots\otimes\bx^{(p)}\rangle\\
		&=&\min_{\bc\in \calS^{d-1}}\max_{\bx^{(2)},\ldots, \bx^{(p)}\in \calS^{d-1}}\langle \sum_{k=1}^d c_k\bu_k^{(2)}\otimes\cdots\otimes\bu_k^{(p)}, \bx^{(2)}\otimes\cdots\otimes\bx^{(p)}\rangle\\
		&=&\min_{\bc\in \calS^{d-1}}\left\|\sum_{k=1}^d c_k\bu_k^{(2)}\otimes\cdots\otimes\bu_k^{(p)}\right\|.
	\end{eqnarray*}
	Note that since 
	$$
	\sum_{k=1}^d c_k\bu_k^{(2)}\otimes\cdots\otimes\bu_k^{(p)}
	$$
	is a $(p-1)$th order odeco tensor, we get
	$$
	\left\|\sum_{k=1}^d c_k\bu_k^{(2)}\otimes\cdots\otimes\bu_k^{(p)}\right\|=\max_{1\le k\le d}c_k,
	$$
	so that
	$$
	\min_{\bx^{(1)}\in \calS^{d-1}}\max_{\bx^{(2)},\ldots, \bx^{(p)}\in \calS^{d-1}}\langle \scrT, \bx^{(1)}\otimes\cdots\otimes\bx^{(p)}\rangle=\min_{\bc\in \calS^{d-1}}\max_{1\le k\le d}c_k={1\over \sqrt{d}}
	$$
	The proof is now completed.
\end{proof}
\vskip 25pt

\begin{proof}[Proof of Theorem \ref{th:odeco-weyl}]  In fact,  Theorem  \ref{th:odeco-weyl} follows from  Theorem \ref{th:ortho-perturb}. We shall now describe how we may proceed to prove Theorem \ref{th:odeco-weyl} in light of   Theorem \ref{th:ortho-perturb}. Indeed, note that Theorem \ref{th:ortho-perturb} already shows that \eqref{eq:odecoweyl0} and \eqref{eq:odecodavis0} hold for any $\|\scrT-\tilde{\scrT}\|\le c_\eps\lambda_k$ with an appropriate choice of constant $C=1+\eps$. When $\|\scrT-\tilde{\scrT}\|>c_{\eps}\lambda_k$, then for any $k'\in[d]$,
	$$
	\max_{1\le q\le p}\sin\angle(\bu_k^{(q)},\tilde{\bu}_{k'}^{(q)})\le 1\le {\|\scrT-\tilde{\scrT}\|\over c_\eps\lambda_k},
	$$
	so that \eqref{eq:odecodavis0} holds with $C=\max\{(1+\eps),1/c_{\eps}\}$.
	
	A careful inspection of the proof to Theorem \ref{th:ortho-perturb} also shows that \eqref{eq:odecoweyl0} holds with $C=1$ not only for any $\lambda_k\ge  \|\scrT-\tilde{\scrT}\|/c_\eps$ but also for any $\tilde{\lambda}_k\ge  \|\scrT-\tilde{\scrT}\|/c_\eps$. On the other hand, for any $\lambda_k< \|\scrT-\tilde{\scrT}\|/c_\eps$ and $\lambda_{k'}< \|\scrT-\tilde{\scrT}\|/c_\eps$, we must have
	$$
	|\lambda_k-\tilde{\lambda}_k|<\|\scrT-\tilde{\scrT}\|/c_\eps,
	$$
	which again suggests that \eqref{eq:odecoweyl0} holds with $C=1/c_\eps$. Optimizing over all possible $\eps$ we have that $\max\{1+\eps,1/c_{\eps}\}$ is minimized at $\eps=2.94$ for an objective value of $16.48$, so that we can take $C=17$.
\end{proof}
\vskip 25pt

\begin{proof}[Proof of Theorem \ref{th:ortho-perturb}]
	We shall derive the stronger statement:
	\begin{align*}\label{eq:secordpert}
		\max_{1\le q\le p}&\sin\angle\left(\tilde{\bu}_{\pi(k)}^{(q)},\,\bu_{k}^{(q)}
		+\dfrac{1}{\lambda_{k}}(\tilde{\scrT}-\scrT)\times_{s\neq q}
		\bu_{k}^{(s)}\right)\\
		\le &
		\left(2+{\|\tilde{\scrT}-\scrT\|\over\lambda_k}\right)
		\left(\dfrac{(1+\eps)\|\tilde{\scrT}-\scrT\|}{\lambda_{k}}\right)^{p-1}\numberthis
	\end{align*}
	for $k\in[d_{\min}]$.
	
	The proof is fairly involved and we begin by giving a short summary of the main challenges and ideas. The proof proceeds by induction over $k$. For the basic case when $k=1$, we can derive the bounds for $\tilde{\lambda}_1$ and $\tilde{\bu}_1$ using the variational characterization of $\lambda_1$ and $\tilde{\lambda}_1$. Special attention is needed to deal with the case when the best rank one approximation of $\scrT$ is not unique, or equivalently when $\lambda_1$ is not simple. In this case it is crucial to identify the right singular vectors of $\tilde{\scrT}$ to be matched with $\bu_1^{(q)}$s. A heuristic argument why this is possible for $p>2$ can be illustrated in the case when $\scrT=\sum_{i=1}^d \bfe_i^{\otimes p}$. When $p=2$, any $\bx\in \calS^{d-1}$ satisfies $\langle \scrT, \bx\otimes \bx\rangle=1$ so we cannot recover the singular vectors, even without perturbation. When $p>2$, however,
	$$
	\max_{\bx\in \calS^{d-1}} \langle \scrT, \bx\otimes\cdots\otimes \bx\rangle=\sum_{i=1}^d x_i^p\le \max_{1\le i\le d} |x_i|^{p-2}
	$$
	so that the maximum is attained only when $x=\bfe_i$ for some $i$. 
	
	The case when $k>1$ is more delicate where we need to make use of the fact that we have matched all leading $k-1$ singular values and vectors of $\scrT$ and $\tilde{\scrT}$. Due to the lack of Courant-Fischer-Weyl min-max principle for higher order tensors, we can only resort to, again, the variational characterization of $\lambda_k$ and $\tilde{\lambda}_k$. Note that we could proceed in an identical fashion as the basic case if $\tilde{\bu}_l^{(q)}=\bu_l^{(q)}$ for $l<k$. Of course this is not the case. Nonetheless we do have bounds for the perturbation of  $\tilde{\lambda}_1,\ldots, \tilde{\lambda}_{k-1}$ and $\tilde{\bu}_1^{(q)},\ldots, \tilde{\bu}_{k-1}^{(q)}$ in the form of \eqref{eq:secordpert}. By carefully leveraging these perturbation bounds, along with the fact that $\bu_k^{(q)}$ and $\tilde{\bu}_k^{(q)}$ must be orthogonal to $\{\bu_1^{(q)},\ldots, \bu_{k-1}^{(q)}\}$ and $\{\tilde{\bu}_1^{(q)},\ldots, \tilde{\bu}_{k-1}^{(q)}\}$, we can derive the desired perturbation bounds for $\tilde{\lambda}_k$ and $\tilde{\bu}^{(q)}_k$ and therefore conclude the induction.
	
	We are now in position to present the detailed proof. For notational convenience, we shall assume that $d_1=\cdots=d_p=:d$. The general proof follows by identical steps, with $d$ replaced everywhere by $d_{\min}$ and some equality signs replaced by ``less than or equal to" signs. In the body of the proof, we indicate which steps have such a change. As noted before, the proof proceeds by induction. To this end, we first consider the basic case when $k=1$.
	
	\paragraph{\textbf{Basic case}} Recall that
	$$
	\lambda_1=\max_{\ba^{(q)}\in \calS^{d-1}: q=1,\ldots, p} \langle\scrT, \ba^{(1)}\otimes\cdots\otimes\ba^{(p)}\rangle
	$$
	and
	$$
	\tilde{\lambda}_1=\max_{\ba^{(q)}\in \calS^{d-1}: q=1,\ldots, p} \langle\tilde{\scrT}, \ba^{(1)}\otimes\cdots\otimes\ba^{(p)}\rangle
	$$
	We consider separately the cases when $\tilde{\lambda}_1\le \lambda_1$ and $\tilde{\lambda}_1> \lambda_1$. 
	\bigskip
	\paragraph{\textbf{Basic case (a): $\tilde{\lambda}_1\le \lambda_1$}} Observe that
	\begin{eqnarray*}
		\lambda_1&=&\langle \scrT,\bu_1^{(1)}\otimes\cdots\otimes \bu_1^{(p)}\rangle\\
		&\le&\langle \tilde{\scrT},\bu_1^{(1)}\otimes\cdots\otimes\bu_1^{(p)}\rangle+
		|\langle \tilde{\scrT}-\scrT,\bu_1^{(1)}\otimes\cdots\otimes\bu_1^{(p)}\rangle|
		\\
		&=&\sum_{k=1}^d \tilde{\lambda}_k\prod_{q=1}^p \langle \bu_1^{(q)},\tilde{\bu}_k^{(q)}\rangle+
		|\langle \tilde{\scrT}-\scrT,\bu_1^{(1)}\otimes\cdots\otimes\bu_1^{(p)}\rangle|.
	\end{eqnarray*}
	The first term can be further bounded by
	\begin{eqnarray*}
		&&\sum_{k=1}^d \tilde{\lambda}_k\prod_{q=1}^p \langle \bu_1^{(q)},\tilde{\bu}_k^{(q)}\rangle\\
		&\le&\max_{1\le k\le d}\left\{\tilde{\lambda}_k\prod_{q=1}^p |\langle \bu_1^{(q)},\tilde{\bu}_k^{(q)}\rangle|^{(p-2)/p}\right\}\times\left(\sum_{k=1}^d\prod_{q=1}^p |\langle \bu_1^{(q)},\tilde{\bu}_k^{(q)}\rangle|^{2/p}\right)\\
		&\le&\max_{1\le k\le d}\left\{\tilde{\lambda}_k\prod_{q=1}^p |\langle \bu_1^{(q)},\tilde{\bu}_k^{(q)}\rangle|^{(p-2)/p}\right\}\times \left(\prod_{q=1}^p\left(\sum_{k=1}^d|\langle \bu_1^{(q)},\tilde{\bu}_k^{(q)}\rangle|^2\right)\right)^{1/p}\\
		\footnotemark &\le &\max_{1\le k\le d}\left\{\tilde{\lambda}_k\prod_{q=1}^p |\langle \bu_1^{(q)},\tilde{\bu}_k^{(q)}\rangle|^{(p-2)/p}\right\},
	\end{eqnarray*}
	
	\footnotetext{\label{note1}This line holds with equality when $d_1=\dots=d_p$ but is a ``$\le$" in general.}
	
	\noindent where the second inequality follows from Holder's inequality. Denote by $\pi(1)$ the index that maximizes the rightmost hand side. When there are more than one maximizers, we take $\pi(1)$ to be an arbitrary maximizing index. Then
	$$
	\lambda_1\le \tilde{\lambda}_{\pi(1)}+
	|\langle \tilde{\scrT}-\scrT,\bu_1^{(1)}\otimes\cdots\otimes\bu_1^{(p)}\rangle|,
	$$
	which, together with the fact that $\tilde{\lambda}_{\pi(1)}\le \tilde{\lambda}_1\le\lambda_1$, implies (by analogous calculations) that
	$$
	|\lambda_1- \tilde{\lambda}_{\pi(1)}|\le
	\max\{|\langle \tilde{\scrT}-\scrT,\bu_1^{(1)}\otimes\cdots\otimes\bu_1^{(p)}\rangle|,
	\,
	|\langle \tilde{\scrT}-\scrT,\tilde{\bu}_{\pi(1)}^{(1)}\otimes\cdots
	\otimes\tilde{\bu}_{\pi(1)}^{(p)}\rangle|
	\}.
	$$
	In addition,
	$$\begin{aligned}
		\lambda_1&\le \tilde{\lambda}_{\pi(1)}\prod_{q=1}^p |\langle \bu_1^{(q)},\tilde{\bu}_{\pi(1)}^{(q)}\rangle|^{(p-2)/p}+
		|\langle \tilde{\scrT}-\scrT,\bu_1^{(1)}\otimes\cdots\otimes\bu_1^{(p)}\rangle|
		\\&\le \lambda_1\prod_{q=1}^p |\langle \bu_1^{(q)},\tilde{\bu}_{\pi(1)}^{(q)}\rangle|^{(p-2)/p}+
		|\langle \tilde{\scrT}-\scrT,\bu_1^{(1)}\otimes\cdots\otimes\bu_1^{(p)}\rangle|.
	\end{aligned}
	$$
	Thus,
	$$
	\left(\prod_{q=1}^p|\langle \bu_1^{(q)},\tilde{\bu}_{\pi(1)}^{(q)}\rangle|\right)^{1/p}\ge (1-\lambda_1^{-1}
	|\langle \tilde{\scrT}-\scrT,\bu_1^{(1)}\otimes\cdots\otimes\bu_1^{(p)}\rangle|)^{1/(p-2)}
	$$
	for all $q=1,\ldots, p$. Now recall that
	$$
	\sin^2\angle (\bu_1^{(q)},\tilde{\bu}_{\pi(1)}^{(q)})=\langle \bu_1^{(q)}-\langle \bu_1^{(q)},\tilde{\bu}_{\pi(1)}^{(q)}\rangle\tilde{\bu}_{\pi(1)}^{(q)},\bu_1^{(q)}\rangle.
	$$
	We get
	
	\begin{equation}\label{eq:prebd}
		\begin{split}
			\prod_{q=1}^p\sin^2\angle (\bu_1^{(q)},\tilde{\bu}_{\pi(1)}^{(q)})
			=&\prod_{q=1}^p(1-\langle \bu_1^{(q)},\tilde{\bu}_{\pi(1)}^{(q)}\rangle^2) \\
			\le & \left(1-\frac{1}{p}\sum_{q=1}^p\langle \bu_1^{(q)},\tilde{\bu}_{\pi(1)}^{(q)}\rangle^2\right)^p\\
			\le & \left(1-\left(\prod_{q=1}^p|\langle \bu_1^{(q)},\tilde{\bu}_{\pi(1)}^{(q)}\rangle|\right)^{2/p}\right)^p\\
			\le & \left(1- (1-\lambda_1^{-1}\|\tilde{\scrT}-\scrT\|)^{2/(p-2)}\right)^p.	
		\end{split}
	\end{equation}
	In the above we use AM-GM inequality to get the first and second inequalities.
	We shall now use this to derive a sharper bound for the lefthand side.
	
	Note that
	$$
	\scrT(\bI,\bu_{1}^{(2)},\ldots,\bu_{1}^{(p)})
	=\lambda_1\bu_{1}^{(1)}.
	$$
	
	Thus, for any unit vector $\bv\perp\tilde{\bu}^{(1)}_{\pi(1)}$ we get
	\begin{align*}
		\lambda_{1}\langle \bv,\,\bu_{1}^{(1)} \rangle
		=& \scrT(\bv,\bu_{1}^{(2)},\ldots,\bu_{1}^{(p)})\\
		=& (\scrT-\tilde{\scrT})(\bv,\bu_{1}^{(2)},\ldots,\bu_{1}^{(p)})
		+\tilde{\scrT}(\bv,\bu_{1}^{(2)},\ldots,\bu_{1}^{(p)}).
	\end{align*}
	
	
	The second term on the rightmost hand side can be further bounded by
	\begin{eqnarray*}
		&&
		|\tilde{\scrT}(\bv,\bu_{1}^{(2)},\ldots,\bu_{1}^{(p)})|\\
		&\footnotemark=&
		\abs*{\sum_{k\neq \pi(1)}\tilde{\lambda}_k
			\langle \tilde{\bu}_k^{(1)},\bv \rangle
			\prod_{q=2}^p\langle \bu_1^{(q)}, \tilde{\bu}_k^{(q)}\rangle}\\
		&\le&\tilde{\lambda}_{\pi(1)}\sum_{k\neq \pi(1)} 
		\prod_{q=2}^p|\langle \bu_1^{(q)}, \tilde{\bu}_k^{(q)}\rangle|\\
		&\le&\tilde{\lambda}_{\pi(1)}
		\cdot
		\prod_{q=2}^p\left(\sum_{k\neq \pi(1)} |\langle \bu_1^{(q)}, \tilde{\bu}_k^{(q)}\rangle|^2\right)^{1/2}\\
		&\le&\lambda_1
		\prod_{q=2}^p\sin\angle (\bu_1^{(q)},\tilde{\bu}_{\pi(1)}^{(q)}),
	\end{eqnarray*}
	\footnotetext{\label{note2} This equality is true even when $d_i$s are not necessarily equal.}
	where the second inequality follows from Cauchy-Schwarz inequality. The last inequality uses $\tilde{\lambda}_{\pi(1)}\le \lambda_1$.This gives
	\begin{align*}\label{eq:secopert}
		&\sin\angle\left(\tilde{\bu}_{\pi(1)}^{(1)},\,\bu_{1}^{(1)}+\dfrac{1}{\lambda_1}(\tilde{\scrT}-\scrT)(\bI,\bu_{1}^{(2)},\ldots,\bu_{1}^{(p)})\right)\\
		=&\sup_{\bv\in\calS^{d-1},\,\bv\perp \tilde{\bu}_1^{(1)}}
		\abs*{\langle \bv,\,\bu_{1}^{(1)}
			+\dfrac{1}{\lambda_1}(\tilde{\scrT}-\scrT)(\bI,\bu_{1}^{(2)},\ldots,\bu_{1}^{(p)})\rangle}\\
		\le & \prod_{q=2}^p\sin\angle (\bu_1^{(q)},\tilde{\bu}_{\pi(1)}^{(q)}).\numberthis
	\end{align*}
	
	\noindent Moreover,
	\begin{align*}
		\sin\angle(\bu_{\pi(1)}^{(1)},\,\bu_1^{(1)})
		=&\sup_{\bv\in\calS^{d-1},\bv\perp\tilde{\bu}_{\pi(1)}^{(1)}}|\langle \bv,\bu_{1}^{(1)}\rangle|\\
		\le & \dfrac{1}{\lambda_1}
		\|(\scrT-\tilde{\scrT})(\bI,\bu_{1}^{(2)},\ldots,\bu_{1}^{(p)})\|
		+\prod_{q=2}^p\sin\angle (\bu_1^{(q)},\tilde{\bu}_{\pi(1)}^{(q)}).
	\end{align*}
	Similarly for each $q=1,\dots,p$ multiplying both sides by $	\lambda_1\sin\angle(\bu_1^{(1)},\,\bu_{\pi(1)}^{(1)})$, we have
	\begin{align*}
		&\lambda_1\sin^2\angle(\bu_1^{(q)},\,\bu_{\pi(1)}^{(q)})\\
		\le& \,\,\lambda_1\prod_{q=1}^p\sin\angle (\bu_1^{(q)},\tilde{\bu}_{\pi(1)}^{(q)})
		+\|\tilde{\scrT}-\scrT\|\cdot\sin\angle (\bu_1^{(q)},\tilde{\bu}_{\pi(1)}^{(q)})\numberthis.
	\end{align*}
	
	%
	In light of \eqref{eq:prebd}, the first term on the rightmost hand side can be bounded by
	
	$$
	\begin{aligned}
		&\lambda_1\prod_{q=1}^p\sin\angle (\bu_1^{(q)},\tilde{\bu}_{\pi(1)}^{(q)})\\
		\le &\lambda_1\left(\prod_{q=1}^p\sin\angle (\bu_1^{(q)},\tilde{\bu}_{\pi(1)}^{(q)})\right)^{{p-2}\over p}\cdot \max_{1\le q\le p}\sin^2\angle (\bu_1^{(q)},\tilde{\bu}_{\pi(1)}^{(q)})\\
		\le& \lambda_1[1-(1-\lambda_1^{-1}\|\tilde{\scrT}-\scrT\|)^{2/(p-2)}]^{p-2\over 2}\max_{1\le q\le p}\sin^2\angle (\bu_1^{(q)},\tilde{\bu}_{\pi(1)}^{(q)}).
	\end{aligned}
	$$
	Thus, rearranging terms in the above expression gives
	$$
	\max_{1\le q\le p}\sin\angle (\bu_1^{(q)},\tilde{\bu}_{\pi(1)}^{(q)})\le \left(1-[1-(1-\lambda_1^{-1}\|\tilde{\scrT}-\scrT\|)^{2/(p-2)}]^{p-2\over 2}\right)^{-1}\cdot{\|\tilde{\scrT}-\scrT\|\over \lambda_1}.
	$$
	Note that the function
	$$
	h_1(x)=\left(1-[1-(1-x)^{2/(p-2)}]^{p-2\over 2}\right)^{-1}
	$$
	is monotonically increasing and continuously differentiable at $0$ with $h_1(0)=1$. We get
	$$
	\max_{1\le q\le p}\sin\angle (\bu_1^{(q)},\tilde{\bu}_{\pi(1)}^{(q)})\le {(1+\eps)\|\tilde{\scrT}-\scrT\|\over \lambda_1},
	$$
	provided that $\|\tilde{\scrT}-\scrT\|\le c_{\eps,p}\lambda_1$ for any positive numerical constant $c_{\eps,p} < h_1^{-1}(1+\eps)$. Plugging this back into \eqref{eq:secopert} analogously for all $q=1,\dots,p$ we have
	$$
	\max_{1\le q\le p}	\sin\angle\left(\tilde{\bu}_{\pi(1)}^{(q)},\,\bu_{1}^{(q)}
	+\dfrac{1}{\lambda_1}(\tilde{\scrT}-\scrT)\times_{k\neq q}\bu_1^{(q)}
	\right)
	\le  \left(\dfrac{(1+\eps)\|\tilde{\scrT}-\scrT\|}{\lambda_1}\right)^{p-1}.
	$$
	\bigskip
	\paragraph{\textbf{Basic case (b): $\tilde{\lambda}_{\pi(1)}>\lambda_1$}} Next consider the case when $\lambda_1<\tilde{\lambda}_{\pi(1)}$. As in the previous case, we can derive that
	\begin{eqnarray*}
		\lambda_1&\le&\max_{1\le k\le d}\left\{\tilde{\lambda}_k\prod_{q=1}^p |\langle \bu_1^{(q)},\tilde{\bu}_k^{(q)}\rangle|^{(p-2)/p}\right\}
		+|\langle\tilde{\scrT}-\scrT,\,\tilde{\bu}_{\pi(1)}^{(1)}\otimes\dots\otimes \tilde{\bu}_{\pi(1)}^{(p)}\rangle|\\
		&=&\tilde{\lambda}_{\pi(1)}\prod_{q=1}^p |\langle \bu_1^{(q)},\tilde{\bu}_{\pi(1)}^{(q)}\rangle|^{(p-2)/p}
		+|\langle\tilde{\scrT}-\scrT,\,\tilde{\bu}_{\pi(1)}^{(1)}\otimes\dots\otimes \tilde{\bu}_{\pi(1)}^{(p)}\rangle|.
	\end{eqnarray*}
	On the other hand,
	$$
	\tilde{\lambda}_{\pi(1)}\le \tilde{\lambda}_1\le \lambda_1+\|\scrT-\tilde{\scrT}\|,
	$$
	where the second inequality follows from triangular inequality. Therefore
	$$
	\tilde{\lambda}_{\pi(1)}\left(1-\prod_{q=1}^p |\langle \bu_1^{(q)},\tilde{\bu}_{\pi(1)}^{(q)}\rangle|^{(p-2)/p}\right)\le 2\|\tilde{\scrT}-\scrT\|
	$$
	leading to
	
	\begin{align*}
		\prod_{q=1}^p\sin^2\angle (\bu_1^{(q)},\tilde{\bu}_{\pi(1)}^{(q)})
		\le& \left(1-\left({1-2\tilde{\lambda}_{\pi(1)}^{-1}\|\tilde{\scrT}-\scrT\|}\right)^{2/(p-2)}\right)^p\\
		\le& \left(1-\left({1-\lambda_1^{-1}\|\tilde{\scrT}-\scrT\|}\over
		{1+\lambda_1^{-1}\|\tilde{\scrT}-\scrT\|} \right)^{2/(p-2)}\right)^p.
	\end{align*}
	Now following an identical argument as in the previous case, we can get
	\begin{eqnarray*}
		&&\lambda_1\max_{1\le q\le p}\sin^2\angle (\bu_1^{(q)},\tilde{\bu}_{\pi(1)}^{(q)})\\
		&\le&(\lambda_1+\|\tilde{\scrT}-\scrT\|)\left[1-\left({1-\lambda_1^{-1}\|\tilde{\scrT}-\scrT\|}\over
		{1+\lambda_1^{-1}\|\tilde{\scrT}-\scrT\|} \right)^{2/(p-2)}\right]^{p-2\over 2}\times\\
		&&\hskip 20pt\times\max_{1\le q\le p}\sin^2\angle (\bu_1^{(q)},\tilde{\bu}_{\pi(1)}^{(q)})+\|\tilde{\scrT}-\scrT\|\cdot\max_{1\le q\le p}\sin\angle (\bu_1^{(q)},\tilde{\bu}_{\pi(1)}^{(q)}),
	\end{eqnarray*}
	leading to
	\begin{eqnarray*}
		&&\max_{1\le q\le p}\sin\angle (\bu_1^{(q)},\tilde{\bu}_{\pi(1)}^{(q)})\\
		&\le& \left(1-\left(1+\dfrac{\|\tilde{\scrT}-\scrT\|}{\lambda_1}\right)\left[1-\left({1-\lambda_1^{-1}\|\tilde{\scrT}-\scrT\|}\over
		{1+\lambda_1^{-1}\|\tilde{\scrT}-\scrT\|} \right)^{2/(p-2)}\right]^{p-2\over 2}\right)^{-1}{\|\tilde{\scrT}-\scrT\|\over \lambda_1}.
	\end{eqnarray*}
	Note that the function
	$$
	h_2(x)=\left[1-(1+x)\left[1-\left({1-x}\over
	{1+x} \right)^{2/(p-2)}\right]^{p-2\over 2}\right]^{-1}
	$$
	is continuously differentiable at $0$ with $h_2(0)=1$, $h_2'(0)>0$. We get
	$$
	\max_{1\le q\le p}\sin\angle (\bu_1^{(q)},\tilde{\bu}_{\pi(1)}^{(q)})\le {(1+\eps)\|\tilde{\scrT}-\scrT\|\over \lambda_1},
	$$
	provided that $\|\tilde{\scrT}-\scrT\|\le c_{\eps,p}\lambda_1$ for any   positive numerical constant $c_{\eps,p}\le h_2^{-1}(1+\eps)$. Finally, following the same steps as in \eqref{eq:secopert}, we have
	\begin{align*}
		&\sin\angle\left(\tilde{\bu}_{\pi(1)}^{(q)},
		\bu_1^{(q)}
		+\dfrac{1}{\lambda_1}(\tilde{\scrT}-\scrT)\times_{k\neq q}\bu_1^{(k)}\right)\\
		\le & \dfrac{\tilde{\lambda}_{\pi(1)}}{\lambda_1}
		\prod_{k\neq q}\sin\angle\left(\bu_1^{(k)},\tilde{\bu}_{\pi(1)}^{(k)}\right)
		\le  \left(1+\dfrac{\|\tilde{\scrT}-\scrT\|}{\lambda_1}\right)
		\left({(1+\eps)\|\tilde{\scrT}-\scrT\|\over \lambda_1}\right)^{p-1}.
	\end{align*}
	\bigskip
	\paragraph{\textbf{Induction}} Next we treat the more general case by induction. 
	%
	To this end, assume that there exists an injective map $\pi:[l]\to [d]$ such that for all $k\le l (<r)$,
	\begin{equation}
		\label{eq:inductionlam}
		|\lambda_k-\tilde{\lambda}_{\pi(k)}|\le \|\tilde{\scrT}-\scrT\|
	\end{equation}
	and
	\begin{equation}
		\label{eq:inductionu}
		\max_{1\le q\le p}\sin\angle(\bu_k^{(q)},\tilde{\bu}_{\pi(k)}^{(q)})\le {(1+\eps)\|\tilde{\scrT}-\scrT\|\over \lambda_k}
	\end{equation}
	we shall now argue they continue to hold for $k=l+1$.
	
	\paragraph{\textbf{Induction (a): $\lambda_{l+1}\ge \max_{k\notin\pi([l])}\tilde{\lambda}_{k}$}} 
	
	Similar to before,
	\begin{eqnarray*}
		\lambda_{l+1}&=&\langle \scrT,\bu_{l+1}^{(1)}\otimes\cdots\otimes \bu_{l+1}^{(p)}\rangle\\
		&\le&\langle \tilde{\scrT},\bu_{l+1}^{(1)}\otimes\cdots\otimes\bu_{l+1}^{(p)}\rangle+\|\tilde{\scrT}-\scrT\|\\
		&\le&\max_{1\le k\le d}\left\{\tilde{\lambda}_k\prod_{q=1}^p |\langle \bu_{l+1}^{(q)},\tilde{\bu}_k^{(q)}\rangle|^{(p-2)/p}\right\}+\|\tilde{\scrT}-\scrT\|.
	\end{eqnarray*}
	We first argue that the index maximizing the rightmost hand side is not from $\pi([l])$. To this end, note that by the induction hypothesis, for any $k\in [l]$,
	\begin{eqnarray*}
		\tilde{\lambda}_{\pi(k)}\prod_{q=1}^p |\langle \bu_{l+1}^{(q)},\tilde{\bu}_{\pi(k)}^{(q)}\rangle|^{(p-2)/p}&\le&(\lambda_k+\|\tilde{\scrT}-\scrT\|)\left(\max_{1\le q\le p}\sin\angle(\bu_k^{(q)},\tilde{\bu}_{\pi(k)}^{(q)})\right)^{p-2}\\
		&\le&(\lambda_k+\|\tilde{\scrT}-\scrT\|)\left((1+\eps)\|\tilde{\scrT}-\scrT\|/\lambda_k\right)\\
		&\le&(1+\eps)(1+c_{\eps,p})\|\tilde{\scrT}-\scrT\|.
	\end{eqnarray*}
	Therefore,
	\begin{eqnarray*}
		&&\max_{1\le k\le l}\left\{\tilde{\lambda}_{\pi(k)}\prod_{q=1}^p |\langle \bu_{l+1}^{(q)},\tilde{\bu}_{\pi(k)}^{(q)}\rangle|^{(p-2)/p}\right\}+\|\tilde{\scrT}-\scrT\|\\
		&\le& [1+(1+\eps)(1+c_{\eps,p})]\|\tilde{\scrT}-\scrT\|\\
		&\le& c_{\eps,p}[1+(1+\eps)(1+c_{\eps,p})]\lambda_{l+1}<\lambda_{l+1},
	\end{eqnarray*}
	by taking $0<c_{\eps, p}<h_3^{-1}(1)$ for the function $h_3(x)=x[1+(1+\eps)(1+x)]$. Thus the index, hereafter denoted by $\pi(l+1)$, that maximizes
	$$
	\tilde{\lambda}_k\prod_{q=1}^p |\langle \bu_{l+1}^{(q)},\tilde{\bu}_k^{(q)}\rangle|^{(p-2)/p}
	$$
	must be different from $\{\pi(1),\ldots,\pi(l)\}$. In addition, because
	$$
	\tilde{\lambda}_{\pi(l+1)}\le \lambda_{l+1}\le \tilde{\lambda}_{\pi(l+1)}\prod_{q=1}^p |\langle \bu_{l+1}^{(q)},\tilde{\bu}_{\pi(l+1)}^{(q)}\rangle|^{(p-2)/p}+\|\tilde{\scrT}-\scrT\|\le \tilde{\lambda}_{\pi(l+1)}+\|\tilde{\scrT}-\scrT\|,
	$$
	we immediately deduce that
	$$
	|\tilde{\lambda}_{\pi(l+1)}-\lambda_{l+1}|\le \|\tilde{\scrT}-\scrT\|,
	$$
	and
	\begin{equation}
		\label{eq:prelimbdu}
		\left(\prod_{q=1}^p|\langle \bu_{l+1}^{(q)},\tilde{\bu}_{\pi(l+1)}^{(q)}\rangle|\right)^{1/p}
		\ge \left(1-\lambda_{l+1}^{-1}\|\tilde{\scrT}-\scrT\|\right)^{1/(p-2)}.
	\end{equation}
	
	\noindent Similar to before, we can derive
	\begin{eqnarray*}
		\lambda_{l+1}\sin^2\angle(\bu_{l+1}^{(1)},\tilde{\bu}_{\pi(l+1)}^{(1)})
		&\,\,\footnoteref{note2}=&
		\scrT(\bu_{l+1}^{(1)}-\langle\bu_{l+1}^{(1)},\tilde{\bu}_{\pi(l+1)}^{(1)}\rangle\tilde{\bu}_{\pi(l+1)}^{(1)},\bu_{l+1}^{(2)},\ldots,\bu_{l+1}^{(p)})\\
		&\le&\tilde{\scrT}(\bu_{l+1}^{(1)}-\langle\bu_{l+1}^{(1)},\tilde{\bu}_{\pi(l+1)}^{(1)}\rangle\tilde{\bu}_{\pi(l+1)}^{(1)},\bu_{l+1}^{(2)},\ldots,\bu_{l+1}^{(p)})\\
		&&\hskip 50pt+\|\tilde{\scrT}-\scrT\|\sin\angle(\bu_{l+1}^{(1)},\tilde{\bu}_{\pi(l+1)}^{(1)}).
	\end{eqnarray*}
	Moreover, because $\lambda_{l+1}\ge \max_{k\notin\pi([l])}\tilde{\lambda}_{k}$, we get
	\begin{eqnarray*}
		&&\tilde{\scrT}(\bu_{l+1}^{(1)}-\langle\bu_{l+1}^{(1)},\tilde{\bu}_{\pi(l+1)}^{(1)}\rangle\tilde{\bu}_{\pi(l+1)}^{(1)},\bu_{l+1}^{(2)},\ldots,\bu_{l+1}^{(p)})\\
		&\footnoteref{note2}=&\sum_{k\neq \pi(l+1)} \tilde{\lambda}_k\prod_{q=1}^p\langle \bu_{l+1}^{(q)}, \tilde{\bu}_k^{(q)}\rangle\\
		&\le&\sum_{k=1}^l \tilde{\lambda}_{\pi(k)}\prod_{q=1}^p\langle \bu_{l+1}^{(q)}, \tilde{\bu}_{\pi(k)}^{(q)}\rangle+\lambda_{l+1}\sum_{k\notin \pi([l+1])} \prod_{q=1}^p|\langle \bu_{l+1}^{(q)}, \tilde{\bu}_k^{(q)}\rangle|\\
		&\le&\sum_{k=1}^l \tilde{\lambda}_{\pi(k)}\prod_{q=1}^p\langle \bu_{l+1}^{(q)}, \tilde{\bu}_{\pi(k)}^{(q)}\rangle
		+\lambda_{l+1}\sin\angle(\bu^{(1)}_{l+1},\tilde{\bu}^{(1)}_{\pi(l+1)})
		\left(\sum_{k\notin \pi([l+1])}\prod_{q=2}^p|\langle \bu_{l+1}^{(q)}, \tilde{\bu}_{\pi(k)}^{(q)} \rangle |^2\right)^{1/2} \\
		&\le&\sum_{k=1}^l \tilde{\lambda}_{\pi(k)}\prod_{q=1}^p\langle \bu_{l+1}^{(q)}, \tilde{\bu}_{\pi(k)}^{(q)}\rangle
		+\lambda_{l+1}\prod_{q=1}^p\sin\angle(\bu^{(q)}_{l+1},\tilde{\bu}^{(q)}_{\pi(l+1)}).
	\end{eqnarray*}
	The first term on the rightmost hand side can be bounded by
	\begin{align*}\label{eq:prevterms}
		&\sum_{k=1}^l \tilde{\lambda}_{\pi(k)}\prod_{q=1}^p\langle \bu_{l+1}^{(q)}, \tilde{\bu}_{\pi(k)}^{(q)}\rangle\\
		\le&\max_{1\le k\le l}\{\tilde{\lambda}_{\pi(k)}\sin\angle (\bu_k^{(1)},\tilde{\bu}_{\pi(k)}^{(1)})\}\left(\max_{1\le q\le p}\sin\angle(\bu_{l+1}^{(q)}, \tilde{\bu}_{\pi(l+1)}^{(q)})\right)^{p-1}\\
		\le&\max_{1\le k\le l}\left\{(\lambda_k+\|\tilde{\scrT}-\scrT\|)\sin\angle (\bu_k^{(1)},\tilde{\bu}_{\pi(k)}^{(1)})\right\}\left(\max_{1\le q\le p}\sin\angle(\bu_{l+1}^{(q)}, \tilde{\bu}_{\pi(l+1)}^{(q)})\right)^{p-1}\\
		\le&(1+\|\tilde{\scrT}-\scrT\|/\lambda_l)(1+\eps)\|\tilde{\scrT}-\scrT\|\left(\max_{1\le q\le p}\sin\angle(\bu_{l+1}^{(q)}, \tilde{\bu}_{\pi(l+1)}^{(q)})\right)^{p-1}\numberthis\\
		\le&(1+\|\tilde{\scrT}-\scrT\|/\lambda_l)(1+\eps)\|\tilde{\scrT}-\scrT\|\left(\max_{1\le q\le p}\sin\angle(\bu_{l+1}^{(q)}, \tilde{\bu}_{\pi(l+1)}^{(q)})\right)^2.
	\end{align*}
	On the other hand, as before, we can derive from \eqref{eq:prelimbdu} that
	$$
	\prod_{q=1}^p\sin^2\angle (\bu_{l+1}^{(q)},\tilde{\bu}_{\pi(l+1)}^{(q)})\le \left(1-\left(1-{\|\tilde{\scrT}-\scrT\|\over \lambda_{l+1}}\right)^{2/(p-2)}\right)^p,
	$$
	so that the second term can be bounded by
	\begin{eqnarray*}
		&&\lambda_{l+1}\prod_{q=1}^p\sin\angle(\bu^{(q)}_{l+1},\tilde{\bu}^{(q)}_{\pi(l+1)})\\
		&\le&\lambda_{l+1}\left[1-\left(1-{\|\tilde{\scrT}-\scrT\|\over \lambda_{l+1}}\right)^{2/(p-2)}\right]^{p-2\over 2}\max_{1\le q\le p}\sin^2\angle(\bu_{l+1}^{(q)}, \tilde{\bu}_{\pi(l+1)}^{(q)}).
	\end{eqnarray*}
	Denote by
	$$
	h_4(x;\eps,p)=(1+x)\left[1-\left({1-x}\over{1+x}\right)^{2/(p-2)}\right]^{p-2\over 2}+(1+\eps)x(1+x).
	$$
	Then since $\lambda_{l+1}\le \lambda_l$
	\begin{eqnarray*}
		&\lambda_{l+1}\sin^2\angle(\bu_{l+1}^{(1)},\tilde{\bu}_{\pi(l+1)}^{(1)})\le &\lambda_{l+1}h_4\left({\|\tilde{\scrT}-\scrT\|\over \lambda_{l+1}}; \eps, p\right)\max_{1\le q\le p}\sin^2\angle (\bu_1^{(q)},\tilde{\bu}_{\pi(1)}^{(q)})\\
		&&+\|\tilde{\scrT}-\scrT\|\cdot\max_{1\le q\le p}\sin\angle (\bu_1^{(q)},\tilde{\bu}_{\pi(1)}^{(q)}),
	\end{eqnarray*}
	implying
	$$
	\sin\angle(\bu_{l+1}^{(1)},\tilde{\bu}_{\pi(l+1)}^{(1)})\le{1\over 1- h_4\left(\|\tilde{\scrT}-\scrT\|/\lambda_{l+1}; \eps, p\right)}\cdot{\|\tilde{\scrT}-\scrT\|\over\lambda_{l+1}}.
	$$
	Observe that $h_4$ is a continuous and increasing function of $x$ and $h_4(0)=0$. Provided that $\|\tilde{\scrT}-\scrT\|\le c_{\eps,p}\lambda_{l+1}$ for some positive numerical constant $c_{\eps,p}\le h_4^{-1}(\eps/(1+\eps))$, we get
	\begin{equation}\label{eq:sineind}
		\left(\max_{1\le q\le p}\sin\angle(\bu_{l+1}^{(q)}, \tilde{\bu}_{\pi(l+1)}^{(q)})\right)\le \dfrac{(1+\eps)\|\tilde{\scrT}-\scrT\|}{\lambda_{l+1}}.
	\end{equation}
	Finally, for any unit vector $\bv\perp \tilde{\bu}_{\pi(l+1)}^{(q)}$ we can derive
	\begin{align*}\label{eq:indsecord}
		\lambda_{l+1}\langle\bv,\bu^{(q)}\rangle=&\scrT\times_q\bv\times_{k\neq q}\bu_{l+1}^{(k)}\\
		=&(\scrT-\tilde{\scrT})\times_q\bv\times_{k\neq q}\bu_{l+1}^{(k)}
		+\tilde{\scrT}\times_q\bv\times_{k\neq q}\bu_{l+1}^{(k)}.\numberthis
	\end{align*}
	Then by similar calculation as above, we can bound
	\begin{align*}
		&|\tilde{\scrT}\times_q\bv\times_{k\neq q}\bu_{l+1}^{(k)}|\\
		\le & \sum_{s=1}^l\tilde{\lambda}_{\pi(s)}\prod_{k\neq q}|\langle \bu_{l+1}^{(k)},\,\tilde{\bu}_{l+1}^{(k)}\rangle|+\lambda_{l+1}\prod_{k\neq q}\sin\angle \left(\bu_{l+1}^{(k)},\,\tilde{\bu}_{\pi(l+1)}^{(k)}\right).
	\end{align*}
	Following \eqref{eq:prevterms}, we have
	\begin{align*}
		&\sum_{s=1}^l\tilde{\lambda}_{\pi(s)}\prod_{k\neq q}|\langle \bu_{l+1}^{(k)},\,\tilde{\bu}_{l+1}^{(k)}\rangle|\\
		\le& (1+\|\tilde{\scrT}-\scrT\|/\lambda_l)(1+\eps)\|\tilde{\scrT}-\scrT\|
		\left(\max_{1\le q\le p}\sin\angle(\bu_{l+1}^{(q)},\,\tilde{\bu}_{\pi(l+1)}^{(q)})\right)^{p-2}\\
		\le& \lambda_{l+1}\left(1+{\|\tilde{\scrT}-\scrT\|\over\lambda_l}\right)
		\left(\dfrac{(1+\eps)\|\tilde{\scrT}-\scrT\|}{\lambda_{l+1}}\right)^{p-1},
	\end{align*}
	where we use \eqref{eq:sineind} in the last line. Now taking supremum over $\bv$ on both sides of \eqref{eq:indsecord}, we have
	\begin{align*}
		&\sin\angle\left(\tilde{\bu}_{\pi(l+1)}^{(q)},\,\bu_{(l+1)}^{(q)}
		+\dfrac{1}{\lambda_{l+1}}(\tilde{\scrT}-\scrT)\times_{k\neq q}
		\bu_{l+1}^{(k)}
		\right)\\
		=&\sup_{\bv\in\calS^{d-1}, \bv\perp\tilde{\bu}_{\pi(l+1)}^{(q)}}
		\abs*{\langle \bv,\,\bu_{(l+1)}^{(1)}
			+\dfrac{1}{\lambda_{l+1}}(\tilde{\scrT}-\scrT)\times_{k\neq q}
			\bu_{l+1}^{(k)}
			\rangle}\\
		\le & \dfrac{1}{\lambda_{l+1}}\sup_{\bv\in\calS^{d-1}, \bv\perp\tilde{\bu}_{\pi(l+1)}^{(q)}}
		{\tilde{\scrT}}\times_q\bv\times_{k\neq q}\bu_{l+1}^{(k)}\\
		\le & \left(2+{\|\tilde{\scrT}-\scrT\|\over\lambda_l}\right)
		\left(\dfrac{(1+\eps)\|\tilde{\scrT}-\scrT\|}{\lambda_{l+1}}\right)^{p-1}.
	\end{align*}
	
	\bigskip
	\paragraph{\textbf{Induction (b): $\lambda_{l+1}< \max_{k\notin\pi([l])}\tilde{\lambda}_{k}$}} Write $\tilde{\bU}_l^{(1)}=(\tilde{\bu}^{(1)}_{\pi(1)},\ldots,\tilde{\bu}^{(1)}_{\pi(l)})$. Then
	\begin{eqnarray*}
		\max_{k\notin\pi([l])}\tilde{\lambda}_{k}&=&\max_{\substack{\ba^{(q)}\in\calS^{d-1}, 1\le q\le p\\ (\tilde{\bU}_l^{(1)})^\top\ba^{(1)}=0}}\langle \tilde{\scrT}, \ba^{(1)}\otimes\cdots\otimes\ba^{(p)}\rangle\\
		&\le&\max_{\substack{\ba^{(q)}\in\calS^{d-1}, 1\le q\le p\\ (\tilde{\bU}_l^{(1)})^\top\ba^{(1)}=0}}\langle \scrT, \ba^{(1)}\otimes\cdots\otimes\ba^{(p)}\rangle+\|\tilde{\scrT}-\scrT\|.
	\end{eqnarray*}
	Observe that
	\begin{eqnarray*}
		&&\max_{\substack{\ba^{(q)}\in\calS^{d-1}, 1\le q\le p\\ (\tilde{\bU}_l^{(1)})^\top\ba^{(1)}=0}}\langle {\scrT}, \ba^{(1)}\otimes\cdots\otimes\ba^{(p)}\rangle\\
		&=&\max_{\ba^{(q)}\in\calS^{d-1}, 1\le q\le p}\langle {\scrT}(I-\tilde{\bU}_l^{(1)}(\tilde{\bU}_l^{(1)})^\top,I,\ldots,I), \ba^{(1)}\otimes\cdots\otimes\ba^{(p)}\rangle\\
		&=&\max_{\ba^{(q)}\in\calS^{d-1}, 1\le q\le p}\left\langle \sum_{k=1}^d{\lambda}_k(I-\tilde{\bU}_l^{(1)}(\tilde{\bU}_l^{(1)})^\top){\bu}_k^{(1)}\otimes\cdots\otimes{\bu}_k^{(p)}, \ba^{(1)}\otimes\cdots\otimes\ba^{(p)}\right\rangle\\
		&=&\max_{1\le k\le d}\{{\lambda}_k\|(I-\tilde{\bU}_l^{(1)}(\tilde{\bU}_l^{(1)})^\top){\bu}_k^{(1)}\|\}.
	\end{eqnarray*}
	By the induction hypothesis, for any $k\le l$,
	\begin{eqnarray*}
		\lambda_k\|(I-\tilde{\bU}_l^{(1)}(\tilde{\bU}_l^{(1)})^\top)\bu_k^{(1)}\|&\le& \lambda_k\|(I-\tilde{\bu}_{\pi(k)}^{(1)}(\tilde{\bu}_{\pi(k)}^{(1)})^\top)\bu_k^{(1)}\|\\
		&\le& (1+\eps)\|\tilde{\scrT}-\scrT\|\\
		&<&\lambda_{l+1}-\|\tilde{\scrT}-\scrT\|,
	\end{eqnarray*}
	by taking $c_{\eps,p}>0$ small enough. Hence
	$$
	\max_{k\notin\pi([l])}\tilde{\lambda}_{k}\le \max_{k>l}\{{\lambda}_k\|(I-\tilde{\bU}_l^{(1)}(\tilde{\bU}_l^{(1)})^\top){\bu}_k^{(1)}\|\}+\|\tilde{\scrT}-\scrT\|\le \lambda_{l+1}+\|\tilde{\scrT}-\scrT\|.
	$$
	This suggests that the index, denoted by $\pi(l+1)$, that maximizes
	$$
	\left\{\tilde{\lambda}_k\prod_{q=1}^p |\langle \bu_{l+1}^{(q)},\tilde{\bu}_k^{(q)}\rangle|^{(p-2)/p}\right\}.
	$$
	is distinct from $\pi([l])$. Moreover, following the same argument as the previous case, we can derive that
	$$
	\tilde{\lambda}_{\pi(l+1)}-\|\tilde{\scrT}-\scrT\|\le \lambda_{l+1}\le \tilde{\lambda}_{\pi(l+1)}\prod_{q=1}^p |\langle \bu_{l+1}^{(q)},\tilde{\bu}_{\pi(l+1)}^{(q)}\rangle|^{(p-2)/p}+\|\tilde{\scrT}-\scrT\|,
	$$
	so that
	$$
	|\tilde{\lambda}_{\pi(l+1)}-\lambda_{l+1}|\le \|\tilde{\scrT}-\scrT\|,
	$$
	and
	$$
	\left(\prod_{q=1}^p|\langle \bu_{l+1}^{(q)},\tilde{\bu}_{\pi(l+1)}^{(q)}\rangle|\right)^{1/p}\ge \left(1-2\lambda_{l+1}^{-1}\|\tilde{\scrT}-\scrT\|\right)^{1/(p-2)}.
	$$
	The rest of the proof is identical to the previous case and works with the same $c_{\eps,p}$ and is therefore omitted for brevity.
	
	\noindent Gathering the conditions used through the proof, we need the constant 
	$$
	c_{\eps,p}\le \min\{(1+\eps)^{-1},\,h_1^{-1}(1+\eps),\,h_2^{-1}(1+\eps),\,h_3^{-1}(1),\,h_4^{-1}(\eps/(1+\eps))\}
	$$
	where
	\begin{align}\label{eq:cpfns}
		h_1(x)&=\left(1-[1-(1-x)^{2/(p-2)}]^{p-2\over 2}\right)^{-1} \nonumber \\
		h_2(x)&=\left[1-(1+x)\left[1-\left({1-x}\over{1+x}\right)^{2/(p-2)}\right]^{p-2\over 2}\right]^{-1} \\
		h_3(x;\eps)&= x[1+(1+\eps)(1+x)] \nonumber \\
		h_4(x;\eps)&=(1+x)\left[1-\left({1-x}\over{1+x}\right)^{2/(p-2)}\right]^{p-2\over 2}+(1+\eps)x(1+x) \nonumber
	\end{align}
	
	Note that although for preciseness, in the proof, we take the constant $c_{\eps, p}>0$ depending on the order of the tensor, it can be taken to be strictly increasing with $p$ so that the argument holds if we take $c_{\eps, 3}$ for all $p\ge 3$.
\end{proof}

\vskip 20pt

\begin{proof}[Proof of Theorem \ref{th:allsingpert}] The proof uses the following Lemma which is proved in the appendix.
	
	\begin{lemma}\label{le:specnorm} Under the assumptions of Corollary \ref{th:allsingpert}, there exist a numerical constant $C>0$, a permutation $\pi:[d_{\min}]\to [d_{\min}]$ and vectors $\boldsymbol{\gamma}^{(q)}\in \{+1,-1\}^{d_{\min}}$ such that the $d_q\times k$ orthogonal matrices 
		$$\bV^{(q)}_k=[\bu_1^{(q)}\,\dots\,\bu_{k}^{(q)}]\,\,\text{ and}\,\,
		\tilde{\bV}^{(q)}_k=[\gamma_1^{(q)}\tilde{\bu}_{\pi(1)}^{(q)}\,\dots\,\gamma_{k}^{(q)}\tilde{\bu}_{\pi(k)}^{(q)}]$$ satisfy
		\begin{equation}\label{eq:specnorm}
			\|\bV^{(q)}_k-\tilde{\bV}^{(q)}_k\|\le {C\|\tilde{\scrT}-\scrT\|\over{\lambda_k}}
		\end{equation}
		for $1\le k\le r$.
	\end{lemma}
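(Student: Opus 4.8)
\noindent The plan is to exploit the \emph{refined} conclusion \eqref{eq:secordpert} of Theorem~\ref{th:ortho-perturb}, not merely its first-order form \eqref{eq:odecodavis}. After reducing via the trivial case (if $\|\tilde{\scrT}-\scrT\|$ is not small relative to $\lambda_k$ then $\|\bV^{(q)}_k-\tilde{\bV}^{(q)}_k\|\le 2$ already dominates $C\|\tilde{\scrT}-\scrT\|/\lambda_k$ for large $C$), we may assume $\|\tilde{\scrT}-\scrT\|\le c_\eps\lambda_k$; since $\lambda_j\ge\lambda_k$ for $j\le k$, Theorem~\ref{th:ortho-perturb} then supplies a single permutation $\pi$ for which \eqref{eq:secordpert} holds for all these $j$ and all $q$. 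Write $\bm_j^{(q)}:=(\tilde{\scrT}-\scrT)\times_{s\neq q}\bu_j^{(s)}$ (the $j$-th column of $\bM^{(q)}$), $\bw_j^{(q)}:=\bu_j^{(q)}+\lambda_j^{-1}\bm_j^{(q)}$, and $\delta_j:=\bigl(2+\|\tilde{\scrT}-\scrT\|/\lambda_j\bigr)\bigl((1+\eps)\|\tilde{\scrT}-\scrT\|/\lambda_j\bigr)^{p-1}$, so that \eqref{eq:secordpert} says $\sin\angle(\tilde{\bu}_{\pi(j)}^{(q)},\bw_j^{(q)})\le\delta_j$. I would then pick the signs $\gamma_j^{(q)}:=\sgn\langle\tilde{\bu}_{\pi(j)}^{(q)},\bw_j^{(q)}\rangle$ and record the orthogonal decomposition $\gamma_j^{(q)}\tilde{\bu}_{\pi(j)}^{(q)}=c_j\|\bw_j^{(q)}\|^{-1}\bw_j^{(q)}+\gamma_j^{(q)}\br_j^{(q)}$ with $\br_j^{(q)}\perp\bw_j^{(q)}$, $\|\br_j^{(q)}\|=\sin\angle(\tilde{\bu}_{\pi(j)}^{(q)},\bw_j^{(q)})\le\delta_j$, $0\le 1-c_j\le\delta_j^2$; note also $\|\bM^{(q)}_k\|\le\|\bM^{(q)}\|\le C_1\|\tilde{\scrT}-\scrT\|$, where $\bM^{(q)}_k$ collects columns $1,\dots,k$.

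Fix $q$ and $k\le r$ and abbreviate $\bV:=\bV^{(q)}_k$, $\tilde{\bV}:=\tilde{\bV}^{(q)}_k$, $M:=\bV^\top\tilde{\bV}$, $P:=\bV\bV^\top$. The key is to avoid the spurious $\sqrt k$ that a naive combination of the $k$ column-wise angle bounds would incur; for this I would use the identity $\bV-\tilde{\bV}=\bV(I-M)-(I-P)\tilde{\bV}$, which (since $\bV$ has orthonormal columns) gives $\|\bV-\tilde{\bV}\|\le\|I-M\|+\|(I-P)\tilde{\bV}\|$ and converts everything into operator-norm statements about $\bM^{(q)}$ and about the remainder matrix $\bR^{(q)}:=[\br_1^{(q)}\,\cdots\,\br_k^{(q)}]$. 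Plugging the decomposition of $\gamma_j^{(q)}\tilde{\bu}_{\pi(j)}^{(q)}$ in, and using $\langle\bu_i^{(q)},\bw_j^{(q)}\rangle=\delta_{ij}+\lambda_j^{-1}\langle\bu_i^{(q)},\bm_j^{(q)}\rangle$ together with $(I-P)\bu_j^{(q)}=0$ for $j\le k$, one gets
$$
I-M=(I-D_1)-\bV^\top\bM^{(q)}_kD_2-\bV^\top\bR^{(q)}\Gamma,\quad (I-P)\tilde{\bV}=(I-P)\bM^{(q)}_kD_2+(I-P)\bR^{(q)}\Gamma,
$$
where $\Gamma:=\diag(\gamma^{(q)})$ and $D_1,D_2$ are diagonal with entries $c_j/\|\bw_j^{(q)}\|$ and $c_j/(\lambda_j\|\bw_j^{(q)}\|)$. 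Since $\|\bw_j^{(q)}\|=1+O(\|\bm_j^{(q)}\|/\lambda_j)$ and $\lambda_j\ge\lambda_k$, one has $\|I-D_1\|=O(\|\tilde{\scrT}-\scrT\|/\lambda_k)$; since $\|D_2\|=O(1/\lambda_k)$ and $\|\bM^{(q)}_k\|\le C_1\|\tilde{\scrT}-\scrT\|$, the $\bM$-terms are $O(C_1\|\tilde{\scrT}-\scrT\|/\lambda_k)$; and the $\bR$-terms are at most $\|\bR^{(q)}\|_F=\bigl(\sum_{j\le k}\|\br_j^{(q)}\|^2\bigr)^{1/2}\le\bigl(\sum_{j\le k}\delta_j^2\bigr)^{1/2}$.

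The one quantity that could grow with $k$ is $\sum_{j\le k}\delta_j^2$, and controlling it is where the scaling hypothesis enters; this is the step I expect to be the main obstacle. From $\|\tilde{\scrT}-\scrT\|/\lambda_j\le c_\eps<1$ we get $\delta_j\le 3(1+\eps)^{p-1}(\|\tilde{\scrT}-\scrT\|/\lambda_j)^{p-1}$, whence, using $\lambda_j\ge\lambda_k$ for $j\le k$ and then $(\|\tilde{\scrT}-\scrT\|/\lambda_j)^{2(p-2)}\le(\|\tilde{\scrT}-\scrT\|/\lambda_r)^{2(p-2)}\le C_2^{2(p-2)}/r$ for $j\le k\le r$,
\begin{align*}
\sum_{j\le k}\delta_j^2
&\le 9(1+\eps)^{2(p-1)}\Bigl(\tfrac{\|\tilde{\scrT}-\scrT\|}{\lambda_k}\Bigr)^{2}\sum_{j\le k}\Bigl(\tfrac{\|\tilde{\scrT}-\scrT\|}{\lambda_j}\Bigr)^{2(p-2)}\\
&\le 9(1+\eps)^{2(p-1)}C_2^{2(p-2)}\Bigl(\tfrac{\|\tilde{\scrT}-\scrT\|}{\lambda_k}\Bigr)^{2}.
\end{align*}
Feeding the three bounds back into $\|\bV-\tilde{\bV}\|\le\|I-M\|+\|(I-P)\tilde{\bV}\|$ produces $\|\bV^{(q)}_k-\tilde{\bV}^{(q)}_k\|\le C\|\tilde{\scrT}-\scrT\|/\lambda_k$ with $C$ depending only on $C_1,C_2,p$, uniformly over $q$ and over $k\le r$. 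In short, the per-vector bounds of Theorem~\ref{th:ortho-perturb} are not operator-norm summable, and recovering the sharp $\lambda_k^{-1}$ (rather than $\sqrt k\,\lambda_k^{-1}$) dependence requires simultaneously (i) the $P$-decomposition, which uses orthonormality of $\bV$ and $\tilde{\bV}$, (ii) the refinement \eqref{eq:secordpert}, which isolates the genuinely first-order term $\lambda_j^{-1}\bm_j^{(q)}$ whose collective size is $\|\bM^{(q)}\|$ rather than $\sqrt k$ times a column norm, and (iii) the hypothesis $\|\tilde{\scrT}-\scrT\|\le C_2\lambda_r r^{-1/2(p-2)}$, calibrated exactly so the higher-order remainders $\delta_j$ do not accumulate over the $k\le r$ columns.
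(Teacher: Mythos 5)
Your proposal is correct and follows essentially the same route as the paper's proof: both extract the first-order term $\bM^{(q)}_k\bD_k$ from the refined bound \eqref{eq:secordpert}, control it in operator norm via the assumption $\|\bM^{(q)}\|\le C_1\|\tilde{\scrT}-\scrT\|$, and invoke the scaling $\|\tilde{\scrT}-\scrT\|\le C_2\lambda_r r^{-1/2(p-2)}$ precisely so that the higher-order column remainders do not accumulate over the $k\le r$ columns. The differences are only bookkeeping: you pass through the identity $\bV-\tilde{\bV}=\bV(I-\bV^\top\tilde{\bV})-(I-\bV\bV^\top)\tilde{\bV}$ and bound the remainder by its Frobenius norm $\bigl(\sum_{j\le k}\delta_j^2\bigr)^{1/2}$, whereas the paper writes $\bV^{(q)}_k-\tilde{\bV}^{(q)}_k=\bM^{(q)}_k\bD_k+\bR^{(q)}_k$ directly and uses $\sqrt{d_{\min}}\max_i\|\bR^{(q)}_k\be_i\|$; your explicit diagonal factors $D_1,D_2$ also account for the non-unit norm of $\bu_j^{(q)}+\lambda_j^{-1}\bm_j^{(q)}$, a point the paper's write-up glosses over but which, as you show, only contributes another term of order $\|\tilde{\scrT}-\scrT\|/\lambda_k$.
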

	
	\medskip
	
	We first consider the case where $\lambda>0$. Following Lemma \ref{le:specnorm}, real singular vectors of $\scrT$ and $\tilde{\scrT}$ with nonzero singular values $\lambda$ and $\tilde{\lambda}$ can be written as:
	$$
	\bv^{(q)}=\bV^{(q)}_{r}\ba\quad\text{and}\quad\tilde{\bv}^{(q)}=\tilde{\bV}^{(q)}_{r}\tilde{\ba},
	$$
	where $\ba$, $\tilde{\ba}\in \RR^{r}$ are two unit vectors with $|\ba_k|=\left(\tfrac{\lambda}{\lambda_k}\right)^{1/(p-2)}\mathbbm{1}(|\ba_k|\neq 0)$ and  $|\tilde{\ba}_k|=\left(\tfrac{\tilde{\lambda}}{\tilde{\lambda}_{\pi(k)}}\right)^{1/(p-2)}\mathbbm{1}(|\tilde{\ba}_k|\neq 0)$ for $1\le k\le d_{\min}$. Given a singular vector tuple $\bv^{(q)}$, with the active set $S=\{k:|\langle \bu^{(q)},\,\bv^{(q)}\rangle|>0\}$ (note that $\bv^{(q)}$ have the same active set). Let $\{\tilde{\bv}^{(q)}\}$ be a corresponding singular vector tuple with signs
	$$
	{\rm sign}\left(\langle\tilde{\bv}^{(q)},\,\tilde{\bu}^{(q)}_{\pi(k)}\rangle\right)
	=\gamma_k^{(q)}{\rm sign}\left(\langle\bv^{(q)},\,\bu^{(q)}_k\rangle\right)
	$$
	where $\gamma_k^{(q)}$ and $\pi$ are respectively the signs and permutation from Lemma \ref{le:specnorm}. We will now show that $|\lambda-\tilde{\lambda}|$ and $\|\bv^{(q)}-\tilde{\bv}^{(q)}\|$ are small. For notational convenience we prove only for the case where
	$$
	{\rm sign}\left(\langle\tilde{\bv}^{(q)},\,\tilde{\bu}^{(q)}_{\pi(k)}\rangle\right)
	={\rm sign}\left(\langle\bv^{(q)},\,\bu^{(q)}_k\rangle\right)=1
	$$
	for all $1\le q\le p$ and $1\le k \le d_{\min}$. The result for other possible signs will follow similarly. To begin with, we have
	\begin{align*}\label{eq:singtri}
		\|\bv^{(q)}-\tilde{\bv}^{(q)}\|\le & \|\bV^{(q)}_K-\tilde{\bV}^{(q)}_K\|+\|\tilde{\bV}^{(q)}_K\|\|\tilde{\ba}-\ba\|\numberthis
	\end{align*}
	where $K=\max\{k:|\langle \bu^{(q)},\,\bv^{(q)}\rangle|>0\}$. Notice that $\lambda_K=\min\{\lambda_k:|\langle \bu^{(q)},\,\bv^{(q)}\rangle|>0\}$. By Lemma \ref{le:specnorm}, 
	\begin{equation}\label{eq:matpart}
		\|\bV^{(q)}_K-\tilde{\bV}^{(q)}_K\|\le \dfrac{C\|\tilde{\scrT}-\scrT\|}{\lambda_K}=\dfrac{C\|\tilde{\scrT}-\scrT\|}{\lambda_{\min}^*}.
	\end{equation}
	In light of \eqref{eq:singtri}, it is thus enough to show the upper bound on $\|\tilde{\ba}-\ba\|$. Writing $x=\ba_k=\lambda_k^{1/(p-2)}$ and $y=\tilde{\ba}_k=\tilde{\lambda}_k^{1/(p-2)}$, we have
	\begin{align*}
		\abs*{\dfrac{1}{\lambda_k^{1/(p-2)}}-\dfrac{1}{\lambda_{\pi(k)}^{1/(p-2)}}}
		=&\dfrac{|x-y|}{xy}\\
		=&\dfrac{|x^{p-2}-y^{p-2}|}{xy(x^{p-3}+x^{p-4}y+\dots+xy^{p-4}+y^{p-3})}\\
		\le & \dfrac{|x^{p-2}-y^{p-2}|}{x^{p-2}y+y^{p-2}x}\\
		\le & \min\left\{\dfrac{|\lambda_k-\tilde{\lambda}_{\pi(k)}|}{\lambda_k(\tilde{\lambda}_{\pi(k)})^{1/(p-2)}},\dfrac{|\lambda_k-\tilde{\lambda}_{\pi(k)}|}
		{\tilde{\lambda}_{\pi(k)}(\lambda_{k})^{1/(p-2)}}\right\}\\
		\le &\dfrac{C\|\tilde{\scrT}-\scrT\|}{\lambda_k}
		\dfrac{1}{(\tilde{\lambda}_{\pi(k)})^{1/(p-2)}},
	\end{align*}
	by Theorem \ref{th:ortho-perturb}, and thus for $S=\{k:\ba_k\neq 0\}$,
	\begin{align*}
		\sum_{k\in S}\abs*{\dfrac{1}{\lambda_k^{1/(p-2)}}-\dfrac{1}{\lambda_{\pi(k)}^{1/(p-2)}}}^2
		\le& \dfrac{C\|\tilde{\scrT}-\scrT\|^2}{\left(\lambda_{\min}^*\right)^2}
		\cdot\sum_{k\in S}\dfrac{1}{(\tilde{\lambda}_{\pi(k)})^{2/(p-2)}}\\
		=& \dfrac{C\|\tilde{\scrT}-\scrT\|^2}{\left(\lambda_{\min}^*\right)^2}
		\cdot\dfrac{1}{(\tilde{\lambda})^{2/(p-2)}}.
	\end{align*}
	We have showed that
	\begin{equation}\label{eq:scalediff}
		\norm*{\dfrac{1}{\lambda^{1/(p-2)}}\ba-\dfrac{1}{\tilde{\lambda}^{1/(p-2)}}\tilde{\ba}}
		\le \dfrac{C\|\tilde{\scrT}-\scrT\|}{\lambda_{\min}^*}\cdot \dfrac{1}{\tilde{\lambda}^{1/(p-2)}}.
	\end{equation}
	Let us assume without loss of generality that $\tilde{\lambda}<\lambda$. By an analogous calculation, we can also show that
	\begin{equation}\label{eq:lamb_diff}
		\abs*{\dfrac{1}{{\lambda}^{1/(p-2)}}-\dfrac{1}{\tilde{\lambda}^{1/(p-2)}}}\le \dfrac{C\|\tilde{\scrT}-\scrT\|}{\lambda_{\min}^*}\cdot \dfrac{1}{\tilde{\lambda}^{1/(p-2)}},
	\end{equation}
	which when combined with \eqref{eq:scalediff} and the fact that $\|\ba\|=\|\tilde{\ba}\|=1$ implies
	\begin{align*}
		\|\ba-\tilde{\ba}\|
		\le&\, \tilde{\lambda}^{1/(p-2)}\abs*{\dfrac{1}{{\lambda}^{1/(p-2)}}-\dfrac{1}{\tilde{\lambda}^{1/(p-2)}}}\|\ba\|+\tilde{\lambda}^{1/(p-2)}
		\norm*{\dfrac{1}{\lambda^{1/(p-2)}}\ba-\dfrac{1}{\tilde{\lambda}^{1/(p-2)}}\tilde{\ba}}\\
		\le& \, \dfrac{C\|\tilde{\scrT}-\scrT\|}{\lambda_{\min}^*}.
	\end{align*}
	Plugging this bound back into \eqref{eq:singtri} along with \eqref{eq:matpart} finishes the proof for singular vectors. 
	
	For the singular values $\lambda$ and $\tilde{\lambda}$, note that
	\begin{align*}
		|\lambda-\tilde{\lambda}|
		=&\left|\lambda^{\tfrac{1}{p-2}}-\tilde{\lambda}^{\tfrac{1}{p-2}}\right|
		\left(\lambda^{\tfrac{p-3}{p-2}}+\lambda^{\tfrac{p-4}{p-2}}
		\tilde{\lambda}^{\tfrac{1}{p-2}}
		+\dots+
		\tilde{\lambda}^{\tfrac{p-3}{p-2}}
		\right)\\
		\le & p\lambda^{\tfrac{p-3}{p-2}}\cdot\dfrac{C\|\tilde{\scrT}-\scrT\|}{\lambda^*_{\min}}\cdot\lambda^{\tfrac{1}{p-2}}\\
		= & p\lambda  \cdot\dfrac{C\|\tilde{\scrT}-\scrT\|}{\lambda^*_{\min}}.
	\end{align*}
	where we use that $\lambda>\tilde{\lambda}$ and \eqref{eq:lamb_diff} in the third inequality.
	Note that 
	\begin{align*}
		\dfrac{1}{\lambda^{1/(p-2)}}=\sum_{k\in S}\dfrac{1}{\lambda_k^{1/(p-2)}}>\dfrac{1}{\left(\lambda_{\min}^*\right)^{1/(p-2)}}
	\end{align*}
	implying that $\lambda<\lambda_{\min}^*$. Thus $|\lambda-\tilde{\lambda}|\le C\|\tilde{\scrT}-\scrT\|$.
	
	Now consider the case when $\lambda=0$. Any set of unit vectors $\bw^{(1)},\dots,\bw^{(p)}$ is a singular vector tuple of 
	$$
	\sum_{k=1}^{d_{\min}}\lambda_k\be_k^{(1)}\otimes \dots\otimes \be_k^{(p)}.
	$$ 
	corresponding to singular value $\lambda=0$ if and only if $\langle\bw^{(q)},\be_k^{(q)}\rangle =0$ for at least two values of $q\in\{1,\dots ,p\}$. Such singular vectors of $\scrT$ and $\tilde{\scrT}$ can be written as $\bv^{(q)}=\bV_{d_{\min}}^{(q)}\bw^{(q)}$ and $\tilde{\bv}^{(q)}=\tilde{\bV}_{d_{\min}}^{(q)}\bw^{(q)}$. The conclusion then follows directly from Lemma \ref{le:specnorm}. If $\langle \bw^{(q)},\be_k^{(q)} \rangle$ for some $k$ such that $\lambda_k=0$, we use the vacuous bound
	$$
	\|\bv^{(q)}-\tilde{\bv}^{(q)}\|=1\le \dfrac{C\|\tilde{\scrT}-\scrT\|}{\lambda_k}
	$$
	with the convention $1/0=+\infty$.
\end{proof}

\bigskip

\begin{proof}[Proof of  Theorem \ref{th:incoherent}] We show the results using $\scrX$ and its odeco approximation $\scrT$. The analogous results for $\tilde{\scrX}$ and $\tilde{\scrT}$ follow similarly. Recall that the polar factor of $\bA^{(q)}$ is the unitary matrix
	$$\bU^{(q)}=\bA^{(q)}[(\bA^{(q)})^\top \bA^{(q)}]^{-1/2}.$$
	It is not hard to see that
	\begin{equation}
		\label{eq:polarnorm}
		\|\bA^{(q)}-\bU^{(q)}\|= \|[(\bA^{(q)})^\top \bA^{(q)}]^{1/2}-I\|\le \max_{1\le i \le d}|\lambda_i((\bA^{(q)})^\top \bA^{(q)})^{1/2}-1|\le\delta.	
	\end{equation}
	We can then consider approximating $\scrX$by
	$$
	\scrT=\sum_{i=1}^d \eta_i \bu_i^{(1)}\otimes\cdots\bu_i^{(p)}.
	$$
	Recall that
	$$
	\|\scrX-\scrT\|=\sup_{\bx^{(q)}\in \calS^{d-1}: 1\le q\le p}\langle \scrX-\scrT,\bx^{(1)}\otimes\cdots\otimes\bx^{(p)}\rangle.
	$$
	For any fixed $\bx^{(q)}$s,
	\begin{eqnarray*}
		&&\langle \scrX-\scrT,\bx^{(1)}\otimes\cdots\otimes\bx^{(p)}\rangle\\
		&=&\sum_{i=1}^d \eta_i\left(\prod_{q=1}^p\langle \bx^{(q)},\ba_i^{(q)}\rangle-\prod_{q=1}^p\langle \bx^{(q)},\bu_i^{(q)}\rangle\right)\\
		&=&\sum_{i=1}^d \eta_i\left(\langle \bx^{(q)},\ba_i^{(q)}\rangle-\langle \bx^{(1)},\bu_i^{(1)}\rangle\right)\prod_{q=2}^p\langle \bx^{(q)},\ba_i^{(q)}\rangle\\
		&&\hskip 20pt +\sum_{i=1}^d \eta_i\langle \bx^{(1)},\bu_i^{(1)}\rangle\left(\langle \bx^{(2)},\ba_i^{(2)}\rangle-\langle \bx^{(2)},\bu^{(2)}\rangle\right)\prod_{q=3}^p\langle \bx^{(q)},\ba_i^{(q)}\rangle\\
		&&\hskip 20pt +\ldots\ldots+\\
		&&\hskip 20pt +\sum_{i=1}^d \eta_i\prod_{q=1}^{p-1}\langle \bx^{(q)},\bu_i^{(q)}\rangle\left(\langle \bx^{(p)},\ba_i^{(p)}\rangle-\langle \bx^{(p)},\bu_i^{(p)}\rangle\right).
	\end{eqnarray*}
	Each term on the rightmost hand side can be bounded via Cauchy-Schwarz inequality:
	\begin{eqnarray*}
		&&\sum_{i=1}^d \eta_i\prod_{q=1}^{k-1}\langle \bx^{(q)},\bu_i^{(q)}\rangle\left(\langle \bx^{(k)},\ba_i^{(k)}\rangle-\langle \bx^{(k)},\bu_i^{(k)}\rangle\right)\prod_{q=k+1}^p\langle \bx^{(q)},\ba_i^{(q)}\rangle\\
		&\le&\|\bA^{(k)}-\bU^{(k)}\|\left[\sum_{i=1}^d \left(\eta_i^2\prod_{q=1}^{k-1}\langle \bx^{(q)},\bu_i^{(q)}\rangle^2\prod_{q=k+1}^p\langle \bx^{(q)},\ba_i^{(q)}\rangle^2\right)\right]^{1/2}\\
		&\le&\eta_1\|\bA^{(k)}-\bU^{(k)}\|\left[\sum_{i=1}^d \left(\prod_{q=1}^{k-1}\langle \bx^{(q)},\bu_i^{(q)}\rangle^2\prod_{q=k+1}^p\langle \bx^{(q)},\ba_i^{(q)}\rangle^2\right)\right]^{1/2}\\
		&\le&\delta\eta \left[\sum_{i=1}^d \left(\prod_{q=1}^{k-1}\langle \bx^{(q)},\bu_i^{(q)}\rangle^2\prod_{q=k+1}^p\langle \bx^{(q)},\ba_i^{(q)}\rangle^2\right)\right]^{1/2}.
	\end{eqnarray*}
	Note that
	$$
	|\langle \bx^{(q)},\bu_i^{(q)}\rangle|, \qquad |\langle \bx^{(q)},\ba_i^{(q)}\rangle|\le 1,
	$$
	and
	$$
	\sum_{i=1}^d \langle \bx^{(q)},\bu_i^{(q)}\rangle^2=1,\qquad \sum_{i=1}^d \langle \bx^{(q)},\ba_i^{(q)}\rangle^2\le 1+\delta.
	$$
	We immediately get
	$$
	\sum_{i=1}^d \eta_i\left(\langle \bx^{(q)},\ba_i^{(q)}\rangle-\langle \bx^{(1)},\bu_i^{(1)}\rangle\right)\prod_{q=2}^p\langle \bx^{(q)},\ba_i^{(q)}\rangle\le \delta(1+\delta)\eta_1,
	$$
	and for $k\ge 2$,
	$$
	\sum_{i=1}^d \eta_i\prod_{q=1}^{k-1}\langle \bx^{(q)},\bu_i^{(q)}\rangle\left(\langle \bx^{(k)},\ba_i^{(k)}\rangle-\langle \bx^{(k)},\bu_i^{(k)}\rangle\right)\prod_{q=k+1}^p\langle \bx^{(q)},\,\ba_i^{(q)}\rangle \le\delta\eta_1.
	$$
	Hence
	$$
	\|\scrX-\scrT\|\le (p+1)\delta\eta_1.
	$$
	Note also that for any $1\le q \le p,$ using equation \eqref{eq:polarnorm}
	$$\begin{aligned}
		\max_{1\le j \le d}\sin\angle (\ba_j^{(q)},\bu_{\pi(j)}^{(q)})&\le \sqrt{1-\min_{j}\langle \ba_j^{(q)},\,\bu_j^{(q)}\rangle^2 }
		\\&\le\sqrt{1-(1-\|\bA^{(q)}-\bU^{(q)}\|^2/2)}\le \delta/\sqrt{2}.
	\end{aligned}
	$$
	The desired result then follows from  Theorem \ref{th:ortho-perturb}. As mentioned before, the proof for $\tilde{\scrX}$ and $\tilde{\scrT}$ follows by identical steps.
\end{proof}
\vskip 20pt

\begin{proof}[Proof of Corollary \ref{co:incoherent}]
	It is clear from Theorem \ref{th:incoherent} that there exist odeco approximations $\scrT$ and $\tilde{\scrT}$ of $\scrX$ and $\tilde{\scrX}$ respectively, such that
	$$\begin{aligned}
		\|\scrT-\tilde{\scrT}\|\le& \|\scrT-\scrX\|+\|\scrX-\tilde{\scrX}\|+\|\tilde{\scrX}-\tilde{\scrT}\| \\
		\le & (p+1)\delta(\eta_1+\tilde{\eta}_1)+\|\scrX-\tilde{\scrX}\|.
	\end{aligned}$$
	By Theorem \ref{th:odeco-weyl}, there is a permutation $\pi:[d_{\min}]\to[d_{\min}]$ and a constant $C>0$ such that
	$$|\eta_k-\tilde{\eta}_{\pi(k)}|\le C((p+1)\delta(\eta_1+\tilde{\eta}_1)+\|\scrX-\tilde{\scrX}\|)$$
	and
	$$
	\max_{1\le q\le p}\sin\angle\left(\bu_k^{(q)},\tilde{\bu}_{\pi(k)}^{(q)}\right)
	\le C((p+1)\delta(\eta_1+\tilde{\eta}_1)+\|\scrX-\tilde{\scrX}\|+\delta)/\eta_k.
	$$
	Finally, we use the second part of Theorem \ref{th:incoherent} to derive that, by triangle inequality, with the same permutation $\pi$ we also have
	$$
	\max_{1\le q\le p}\sin\angle\left(\ba_k^{(q)},\tilde{\ba}_{\pi(k)}^{(q)}\right)
	\le C((p+1)\delta(\eta_1+\tilde{\eta}_1)+\|\scrX-\tilde{\scrX}\|+\delta)/\eta_k.
	$$	
\end{proof}

\vskip 30pt

\begin{proof}[Proof of Theorem \ref{th:tpcalower}] 
	%
	First note that a lower bound for a special case is also a lower bound for the more general case. Therefore,
	%
	\begin{eqnarray*}
		&&\inf_{\tilde{\bu}_k^{(1)},\ldots,\tilde{\bu}_k^{(p)}}\sup_{\bu_k^{(q)}\in \calS^{d_q-1}: 1\le q\le p}\EE\max_{1\le q\le p} \sin\angle(\bu_k^{(q)},\tilde{\bu}_k^{(q)})\\
		&\ge& \inf_{\tilde{\bu}_k^{(1)},\ldots,\tilde{\bu}_k^{(p)}}\sup_{\substack{\bu_k^{(q)}\in \calS^{d_q-1}: 1\le q\le p\\ \lambda_{k'}=0, \forall k'\neq k}}\EE\max_{1\le q\le p} \sin\angle(\bu_k^{(q)},\tilde{\bu}_k^{(q)}).
	\end{eqnarray*}
	The special case was simply the rank one case where $\scrT$ has only one nonzero singular value $\lambda_k$. It was shown by \cite{zhang2018tensor} that for this case,
	$$
	\inf_{\tilde{\bu}_k^{(1)},\ldots,\tilde{\bu}_k^{(p)}}\sup_{\substack{\bu_k^{(q)}\in \calS^{d_q-1}: 1\le q\le p\\ \lambda_{k'}=0, \forall k'\neq k}}\EE\max_{1\le q\le p} \sin\angle(\bu_k^{(q)},\tilde{\bu}_k^{(q)})\ge c\cdot{\sqrt{d_1+\cdots+d_p}\over \lambda_k},
	$$
	and thus \eqref{eq:svdbd4} follows. The lower bound \eqref{eq:svdbd3} for estimating the singular value follows by the same argument.
\end{proof}

\bibliographystyle{plainnat}
\bibliography{references}

\appendix
\section*{Appendix -- Proof of Theorem \ref{th:comp}}
We first show that if $\lambda_k>0$, then $(\pm\bu_k^{(1)},\ldots,\pm\bu_k^{(p)})$ is a local maximum of $F$. Consider the Lagrange form of $F$:
$$
F_{\lambda_k}(\ba^{(1)},\ldots, \ba^{(p)}):=F(\ba^{(1)},\ldots, \ba^{(p)})+\lambda_k\sum_{q=1}^p\left(1-\|\ba^{(q)}\|^2\right).
$$
It is easy to see that $(\bu_k^{(1)},\ldots,\bu_k^{(p)})$ satisfies the first order condition of $F_{\lambda_k}$:
$$
\scrT\times_{q'\neq q} \ba^{(q')}=\lambda_k \ba^{(q)},\qquad \forall q=1,\ldots, p.
$$
Moreover, it can also be derived that the Hessian of $F_{\lambda_k}$ is
$$
\left[\begin{array}{cccc}-\lambda_k I_{d_1} & \scrT\times_{q'\notin \{1,2\}} \ba^{(q')}&\cdots & \scrT\times_{q'\notin \{1,p\}} \ba^{(q')}\\ (\scrT\times_{q'\notin \{1,2\}} \ba^{(q')})^\top& -\lambda_k I_{d_2}&\ldots& \scrT\times_{q'\notin \{2,p\}} \ba^{(q')}\\ \ldots&\ldots&\ldots&\ldots\\ (\scrT\times_{q'\notin \{1,p\}} \ba^{(q')})^\top& (\scrT\times_{q'\notin \{2,p\}} \ba^{(q')})^\top&\cdots&-\lambda_kI_{d_p} \end{array}\right].
$$
When evaluated at $(\bu_k^{(1)},\ldots,\bu_k^{(p)})$, the Hessian becomes
$$
H=\lambda_k \bv_k\otimes\bv_k -\lambda_k{\rm diag}(I_{d_1}+\bu_k^{(1)}\otimes \bu_k^{(1)},\ldots,I_{d_p}+\bu_k^{(p)}\otimes \bu_k^{(p)}),
$$
where $\bv_k=[(\bu_k^{(1)})^\top,\ldots,(\bu_k^{(p)})^\top]^\top$. For any $(\ba^{(1)},\ldots, \ba^{(p)})\neq (\bu_k^{(1)},\ldots,\bu_k^{(p)})$, write
$$
\ba^{(q)}=\langle \ba^{(q)},\bu_k^{(q)}\rangle\bu_k^{(q)}+ \tilde{\ba}^{(q)}.
$$
It can be verified that
$$
[(\ba^{(1)})^\top,\ldots,(\ba^{(p)})^\top] H [(\ba^{(1)})^\top,\ldots,(\ba^{(p)})^\top]^\top<0
$$
if
$$
\prod_{1\le q\le p}|\langle \ba^{(q)},\bu_k^{(q)}\rangle|<1.
$$
This implies that $(\bu_k^{(1)},\ldots,\bu_k^{(p)})$ is a local maximum of $F$.

We now argue that $F$ has no local maximum other than $\{(\pm\bu_k^{(1)},\ldots,\pm\bu_k^{(p)}): \lambda_k>0\}$.  We shall prove this by contradiction. Assume the contrary that unit length vectors $\ba^{(q)}$s are a local maximum but $(\ba^{(1)},\ldots,\ba^{p})\notin \{(\pm\bu_k^{(1)},\ldots,\pm\bu_k^{(p)}): \lambda_k>0\}$. By first order condition, there exists a $\lambda\in \RR$ such that
$$
\scrT\times_{q'\neq q} \ba^{(q')}=\lambda \ba^{(q)},\qquad \forall q=1,\ldots, p.
$$
We can assume that $\lambda>0$ without loss of generality. We get that for each $k:\lambda_k>0$,
\begin{equation}
	\label{eq:values}
	\abs*{\langle \bu^{(q)}_k,\,\ba^{(q)}\rangle}=\gamma_k=\begin{cases}\left(\dfrac{\lambda}{\lambda_k}\right)^{\frac{1}{p-2}},\forall q=1,\dots,p\text{ if }\prod_{q=1}^p\langle\bu^{(q)}_k,\,\ba_q \rangle\neq 0
		\\0,\hspace{1.5cm}\forall q=1,\dots,p\text{ if }\prod_{q=1}^p\langle\bu^{(q)}_k,\,\ba_q \rangle= 0.
	\end{cases}\tag{A1}
\end{equation}
Moreover, we have
\begin{equation}\label{eq:signs}
	\prod_{q}\langle \bu^{(q)}_k,\ba^{(q)} \rangle\ge 0,\qquad \forall k\text{ such that }\lambda_k> 0.\tag{A2}
\end{equation}

We first consider the case where
$$S:=\{k:\lambda_k\prod_q\langle \bu^{(q)}_k,\ba^{(q)}\rangle\neq 0\}$$
has at most 1 element. We pick $j\in S$ if it exists, otherwise let $j$ be an arbitrary element from $\{k:\lambda_k>0\}.$ Since $\ba^{(q)}\neq \bu_j^{(q)}$ for at least one $q,$ we can construct  a new vector $\bb^{(q)}\in\calS^{d-1},$ which has 
$$\abs*{\langle \bu^{(q)}_j,\,\bb^{(q)}\rangle }> \abs*{\langle \bu^{(q)}_j,\,\ba^{(q)}\rangle },$$ while 
$$\prod_q\abs*{\langle \bu^{(q)}_k,\,\bb^{(q)}\rangle }=0\qquad  \forall k\neq j \rm{\ such\  that\ } \lambda_k>0.$$
It is now easy to see that $F(\bb^{(1)},\dots,\bb^{(p)})> F(\ba^{(1)},\dots,\ba^{(p)}).$ Since we can take $\bb^{(q)}$ arbitrarily close to $\ba^{(q)},$ it is clear that $(\ba^{(1)},\dots,\ba^{(p)})$ cannot be a maximum.

Henceforth, we assume that $S$ has at least two elements, say $j_1$ and $j_2$. Let us define $$\eta:=\min\left\{\abs*{\langle \bu_{j_i}^{(q)}\ba^{(q)}\rangle}:1\le q \le p,\,i=1,2\right\}/2.$$ 
For $0<\delta<\eta,$ for each $1\le q\le p,$ we construct $\bb^{(q)}$ as follows:
\begin{multline*}\bb^{(q)}(\delta)=s_1\left(\sqrt{\langle \bu_{j_1}^{(q)},\,\ba^{(q)}\rangle^2+\delta}\right)\bu^{(q)}_{j_1}
	\\+s_2\left(\sqrt{\langle \bu_{j_2}^{(q)},\,\ba^{(q)}\rangle^2-\delta}\right)\bu^{(q)}_{j_2}+\displaystyle\sum_{k\neq j_1,j_2}\langle \bu^{(q)}_k,\,\ba^{(q)} \rangle\bu^{(q)}_k,
\end{multline*}
where $s_i=\text{sign}(\langle \bu_{j_i}^{(q)},\,\ba^{(q)}\rangle)$ for $i=1,2.$ Evidently, $\bb^{(q)}(\delta)\in\calS^{d_q-1},$ and \\$\text{sign}(\langle \bu^{(q)}_k,\,\bb^{(q)}\rangle)=\text{sign}(\langle \bu^{(q)}_k,\,\ba^{(q)}\rangle)$ for all $k$ and $q.$ Since $(\ba^{(1)},\dots,\ba^{(p)})$ is a critical point, we get using \eqref{eq:values} and  \eqref{eq:signs} that
$$\begin{aligned}
	&F(\bb^{(1)},\dots,\bb^{(p)})-F(\ba^{(1)},\dots,\ba^{(p)})
	\\=&\lambda_{j_1}\prod_{q=1}^p[(\langle \bu^{(q)}_{j_1},\, \ba^{(q)}\rangle)^2+\delta]^{1/2}-\lambda_{j_1}\prod_{q=1}^p\abs*{\langle\bu^{(q)}_{j_1},\,\ba^{(q)}\rangle}+\lambda_{j_2}\prod_{q=1}^p[(\langle \bu^{(q)}_{j_2},\, \ba^{(q)}\rangle)^2-\delta]^{1/2}\\
	&\hskip 50pt -\lambda_{j_2}\prod_{q=1}^p\abs*{\langle\bu^{(q)}_{j_2},\,\ba^{(q)}\rangle}
	\\=&\lambda_{j_1}[\gamma_{j_1}^2+\delta]^{p/2}-\lambda_{j_1}\gamma_{j_1}^p+\lambda_{j_2}[\gamma_{j_2}^2-\delta]^{p/2}-\lambda_{j_2}\gamma_{j_2}^{p}
	\\=&\dfrac{p}{2}\times \lambda_{j_1}\delta \xi_1^{p/2-1}-\dfrac{p}{2}\times \lambda_{j_2}\delta \xi_2^{p/2-1}
\end{aligned}$$
for some $\gamma_{j_1}^2\le \xi_1\le \gamma_{j_1}^2+\delta\text{ and }\gamma_{j_1}^2-\delta\le \xi_2\le \gamma_{j_2}^2.$ Since $f(x)=x^{p/2-1}$ is monotonically increasing for $p>2,$
$$\begin{aligned}
	&F(\bb^{(1)},\dots,\bb^{(p)})-F(\ba^{(1)},\dots,\ba^{(p)})
	\\>&\dfrac{p}{2}\delta[\lambda_{j_1}\gamma_{j_1}^{p-2}-\lambda_{j_2}\gamma_{j_2}^{p-2}]
	\\=&\dfrac{p\delta}{2}\left[\lambda_{j_1}\dfrac{\prod_{q\neq 1}\abs*{\langle \bu^{(q)}_{j_1},\,\ba^{(q)}\rangle}}{\abs*{\langle \bu^{(1)}_{j_1},\,\ba^{(1)}\rangle}}-\lambda_{j_2}\dfrac{\prod_{q\neq 1}\abs*{\langle \bu^{(q)}_{j_2},\,\ba^{(q)}\rangle}}{\abs*{\langle \bu^{(1)}_{j_2},\,\ba^{(1)}\rangle}}\right]
	\\=&\dfrac{p\delta}{2}[\lambda-\lambda]\qquad\text{using \eqref{eq:values}.}
\end{aligned}$$
Since we can take $\delta$ to be arbitrarily close to zero, it is clear that $(\ba^{(1)},\dots,\ba^{(p)})$ is not a local maximum.
\vspace{10pt}

\noindent\textbf{Global attraction of the hidden basis:} We will follow the outline in section 4.2.2 of \cite{belkin2018eigenvectors}. For brevity, we assume $d_1=\dots=d_p=d.$ For $(\ba_1,\dots,\ba_p)\in\calS^{d-1}\times \dots\times \calS^{d-1},$ the tangent space of the cross-product of $p$ spheres is
$$T_{\ba_1,\dots,\ba_p}\calS^{d-1}\times \dots\times \calS^{d-1}=\ba_1^{\perp}\times \dots\times\ba_p^{\perp}.$$
We define the exponential map $\phi:T_{\ba_1,\dots,\ba_p}\calS^{(d-1)\otimes p}\to\calS^{(d-1)\otimes p}$ as:
$$\phi(\bx_1,\dots,\bx_p)=\left(\ba_1\cos\norm*{\bx_1}+\frac{\bx_1}{\norm*{\bx_1}}\sin\norm*{\bx_1},\dots,\ba_p\cos\norm*{\bx_p}+\frac{\bx_p}{\norm*{\bx_p}}\sin\norm*{\bx_p}\right).$$
It can be checked that $D\phi=D\phi^{-1}=\rm{diag}[\calP_{\ba_1^{\perp}},\dots,\calP_{\ba_p^{\perp}}].$ 

We first determine the local convergence manifold of $(\ba_1,\dots,\ba_p),$ that is, the set 
$$\calL_{local}=\{\Tilde{\bx}(0):\lim_{t\to \infty}\bx_i(t)=\ba_i,\,\bx_i(t)\in U_i\,\,\, \forall t\in \mathbb{N}\}$$
for some local neighborhoods $U_i$ of $\ba_i.$ To disprove global attraction to a particular critical point, note that it is enough to determine $\calL_{local}\cap \calQ_{\ba_1}\times \dots\times \calQ_{\ba_p},$ where $$\calQ_{\ba_q}=\{\bv\in\calS^{d-1}:\rm{sign}(\langle \bv,\bu^{(q)}_i\rangle)=\rm{sign}(\langle \ba_q,\bu^{(q)}_i\rangle)\,\,\forall i\text{ such that }\langle \ba_q,\bu^{(q)}_i\rangle\neq 0\}$$ for $1\le q\le p.$

Let $S=\{i:\prod_q\langle \ba_q,\,\bu^{(q)}_i\rangle \neq 0\}.$ We will use $\calP^{(q)}_S=\displaystyle\sum_{i\in S}\bu^{(q)}_i\bu^{(q)T}_i,$ and similarly $\calP^{(q)}_{\bar{S}}$ for $\bar{S}=[d]/S.$ Using \eqref{eq:values} it is easy to see that if $\lambda>0,$ $S$ is in fact same as $S_k=\{i:\langle \ba_k,\,\bu^{(k)}_i\rangle \neq 0\}$ for all $1\le k\le p.$ We then have the following lemma.
\bigskip
\begin{lemma}\label{le:hyperbolic} $D[\phi\circ G\circ \phi^{-1}]_{\phi(\ba_1\dots\ba_p)}$ is a matrix with the following properties:
	\\\noindent1. $D[\phi\circ G\circ \phi^{-1}]$ is the 0 map on $\calK=\{(\bx_1\dots\bx_p):\bx_q\in \rm{Range}(\calP^{(q)}_{\bar{S}})\text{ for all }q\}$.
	\\\noindent2. If $\abs*{S}>1,$ there is a space $$\calL=(\rm{Range}(\calP^{(1)}_{S}\cap \calP_{\ba_1^{\perp}})\cap\calQ_{\ba_1})\times \dots\times (\rm{Range}(\calP^{(p)}_{S}\cap \calP_{\ba_p^{\perp}})\cap\calQ_{\ba_p})$$
	of  positive dimension on which $(\bx_1^T\dots\bx_p^T)[D[\phi\circ G\circ \phi^{-1}]-I](\bx_1^T\dots\bx_p^T)^T>0.$ 
\end{lemma}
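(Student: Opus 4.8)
The plan is to compute the differential of $G$ at the critical point $(\ba_1,\dots,\ba_p)$ in closed form — using the characterization \eqref{eq:values}--\eqref{eq:signs} — and read off the two assertions. Throughout we may assume the multiplier $\lambda>0$ (as in the displayed argument above), so that $(\ba_1,\dots,\ba_p)$ is a fixed point of $G$. First I would fix sign-adjusted coordinates: for $k\in S$ set $s_k^{(q)}=\mathrm{sign}\langle\bu_k^{(q)},\ba_q\rangle$ and $\bar{\bu}_k^{(q)}=s_k^{(q)}\bu_k^{(q)}$, so that $\langle\bar{\bu}_k^{(q)},\ba_q\rangle=\gamma_k:=(\lambda/\lambda_k)^{1/(p-2)}>0$ by \eqref{eq:values} and $\prod_{q=1}^p s_k^{(q)}=1$ by \eqref{eq:signs}. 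Since the first-order condition forces $\ba_q\in\mathrm{span}\{\bu_i^{(q)}:\lambda_i>0\}$ while \eqref{eq:values} annihilates all components indexed outside $S$, one has $\ba_q=\sum_{k\in S}\gamma_k\bar{\bu}_k^{(q)}$ with $\sum_{k\in S}\gamma_k^2=1$; in particular $\ba_q\perp\mathrm{Range}(\calP^{(q)}_{\bar{S}})$.

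Next, the differential. Because $G_q$ depends on its arguments only through $\bw^{(q)}:=\scrT\times_{j\neq q}\ba^{(j)}$, which equals $\lambda\ba_q$ at the critical point, one gets $DG_q=\lambda^{-1}\calP_{\ba_q^{\perp}}\sum_{j\neq q}\partial_{\ba^{(j)}}\bw^{(q)}$ with $\partial_{\ba^{(j)}}\bw^{(q)}=\scrT\times_{l\neq q,j}\ba_l$. Expanding this contraction in the odeco basis and invoking \eqref{eq:values} and \eqref{eq:signs} — the crucial point being that any $k\notin S$ with $\lambda_k>0$ has $\langle\bu_k^{(l)},\ba_l\rangle=0$ for all $l$ and hence contributes nothing when $p\ge3$ — it collapses to $\lambda\sum_{k\in S}\bar{\bu}_k^{(q)}(\bar{\bu}_k^{(j)})^{\top}$. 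Since $D\phi=D\phi^{-1}=\mathrm{diag}[\calP_{\ba_1^{\perp}},\dots,\calP_{\ba_p^{\perp}}]$, the matrix $M:=D[\phi\circ G\circ\phi^{-1}]$ acts on the tangent space $T=\bigoplus_q\ba_q^{\perp}$ with blocks $M_{qq}=0$ and, for $j\neq q$ and $\bx_j\in\ba_j^{\perp}$, $M_{qj}\bx_j=\sum_{k\in S}\langle\bar{\bu}_k^{(j)},\bx_j\rangle\,\bar{\bu}_k^{(q)}$ (the outer projection is redundant because $\bx_j\perp\ba_j$ already makes the image orthogonal to $\ba_q$). Consequently $\langle\bx_q,M_{qj}\bx_j\rangle=\sum_{k\in S}\langle\bar{\bu}_k^{(q)},\bx_q\rangle\langle\bar{\bu}_k^{(j)},\bx_j\rangle$, so $M$ is self-adjoint on $T$.

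Property 1 is then immediate: for $\bx\in\calK$ one has $\langle\bar{\bu}_k^{(j)},\bx_j\rangle=0$ for every $k\in S$ and every $j$, so $M\bx=0$; and $\calK\subseteq T$ by the orthogonality noted above. For Property 2, parametrize $\bx_q=\sum_{k\in S}c_k^{(q)}\bar{\bu}_k^{(q)}$ subject to $\sum_{k\in S}c_k^{(q)}\gamma_k=0$, i.e. $\bx_q\in\mathrm{Range}(\calP^{(q)}_S)\cap\ba_q^{\perp}$. The block formula gives
\[
\bx^{\top}(M-I)\bx=\sum_{k\in S}\Bigl[\bigl(\textstyle\sum_{q=1}^p c_k^{(q)}\bigr)^2-2\sum_{q=1}^p(c_k^{(q)})^2\Bigr].
\]
Restricting to the sub-directions with $c_k^{(1)}=\dots=c_k^{(p)}=:c_k$ — which still meet the constraints and, since $|S|\ge2$, span a subspace of dimension $|S|-1\ge1$ inside $\calL$ (small moves along them keep every $\bar{\bu}_k^{(q)}$-coordinate of $\ba_q$ positive, hence stay in $\calQ_{\ba_q}$) — the form becomes $p(p-2)\sum_{k\in S}c_k^2>0$ since $p>2$. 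This exhibits the claimed positive-dimensional subspace of $\calL$ on which $\bx^{\top}(M-I)\bx>0$; together with self-adjointness of $M$ it also shows $M$ has an eigenvalue strictly above $1$, which is what the subsequent (stable-manifold / measure-zero) argument behind Theorem \ref{th:comp} needs.

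I expect the differential computation in the second step to be the main obstacle: one has to verify carefully that $\scrT\times_{l\neq q,j}\ba_l$ receives contributions only from indices $k\in S$ (this uses \eqref{eq:values} together with $p\ge3$) and to keep track of the signs $s_k^{(q)}$ so that the identity $\prod_q s_k^{(q)}=1$ condenses everything into the clean rank-$|S|$ form $\sum_{k\in S}\bar{\bu}_k^{(q)}(\bar{\bu}_k^{(j)})^{\top}$. Once that normal form is available, both Properties 1 and 2 reduce to the short linear-algebra computations indicated above.
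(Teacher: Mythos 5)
Your proposal is correct and follows essentially the same route as the paper: both compute $D[\phi\circ G\circ\phi^{-1}]=\mathrm{diag}[\calP_{\ba_1^{\perp}},\dots,\calP_{\ba_p^{\perp}}]\,DG\,\mathrm{diag}[\calP_{\ba_1^{\perp}},\dots,\calP_{\ba_p^{\perp}}]$ with vanishing diagonal blocks, use \eqref{eq:values}--\eqref{eq:signs} together with $p\ge 3$ to kill every contribution from indices outside $S$ (giving Property 1), and obtain Property 2 by lower-bounding the quadratic form on directions in $\mathrm{Range}(\calP^{(q)}_{S})\cap\ba_q^{\perp}$ compatible with $\calQ_{\ba_q}$. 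Your sign-adjusted rank-$|S|$ normal form for the off-diagonal blocks and the explicit equal-coefficient test directions (giving $p(p-2)\sum_{k}c_k^2>0$ on an $(|S|-1)$-dimensional set) are simply a cleaner execution of the paper's computation, which keeps the tensor contractions and instead invokes the sign conditions of $\calQ_{\ba_q}$ to make the cross terms nonnegative.
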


\begin{proof}[Proof of Lemma \ref{le:hyperbolic}] Since $(\ba_1,\dots,\ba_p)$ is a fixed point of $G,$ we have using chain rule that 
	$$\begin{aligned}D[\phi\circ G\circ \phi^{-1}]_{\phi(\ba_1\dots\ba_p)}&=D\phi_{G(\ba_1^p) }DG_{\ba_1^p}D\phi^{-1}_{\phi(\ba^p_1)}
		\\&=\text{diag}[\calP_{\ba^{\perp}_1}\dots \calP_{\ba^{\perp}_p}]\,\,DG_{\ba_1^p}\,\,\text{diag}[\calP_{\ba^{\perp}_1}\dots \calP_{\ba^{\perp}_p}].\end{aligned}$$
	Since $\scrT\times_{q'\neq q}\ba^{(q')}=\lambda\ba^{(q)}$ for $q=1,\dots,p,$ $\norm*{\scrT\times _{q'\neq  q}\ba^{(q)}}=\lambda.$ Hence after some calculation we obtain that $DG$ can be written as
	$$\dfrac{1}{\lambda}\begin{bmatrix}\mathbf{0} &\calP_{\ba_1^{\perp}}\scrT\times_{q'\notin\{1,2\}}\ba^{q'} &\dots &\calP_{\ba_1^{\perp}}\scrT\times_{q'\notin\{1,p\}}\ba^{(q')}
		\\\calP_{\ba_2^{\perp}}(\scrT\times_{q'\notin\{1,2\}}\ba^{q'})^T &\mathbf{0} &\dots &\calP_{\ba_2^{\perp}}\scrT\times_{q'\notin\{2,p\}}\ba^{(q')}
		\\\vdots &\vdots &\ddots &\vdots
		\\\calP_{\ba_p^{\perp}}(\scrT\times_{q'\notin\{1,p\}}\ba^{q'})^T &\calP_{\ba_p^{\perp}}(\scrT\times_{q'\notin\{2,p\}}\ba^{q'})^T &\dots &\mathbf{0}
	\end{bmatrix}.$$
	Now for $(\bx_1\dots\bx_p)\in \calK,$ for any $k,\,l\in [p],$ $$\begin{aligned}\bx_{k}^T\calP_{\ba_k^{\perp}}\scrT\times_{q'\notin\{k,l\}}\ba^{(q')}\calP_{\ba_l^{\perp}}\bx_l=&\bx_{k}^T\scrT\times_{q\notin\{k,l\}}\bx_l
		\\=&\displaystyle\sum_{i \in S}\lambda_i\langle \bx_k,\,\bu^{(k)}_i\rangle \langle \bx_l,\,\bu^{(l)}_i\rangle\prod_{q'\notin\{k,l\}}\langle \ba^{(q')},\,\bu^{(q')}_i \rangle =0.
	\end{aligned}$$
	The first claim is now proved. For the rest, note similarly that for $(\bx_1,\dots,\bx_p)\in \calL,$ 
	$$\begin{aligned}
		&(\bx_1^T,\dots,\bx_p^T)D[\phi\circ G\circ \phi^{-1}](\bx_1^T,\dots,\bx_p^T)
		\\&=\dfrac{1}{\lambda}\sum_{k\neq l}\sum_{i\in S}\lambda_i\langle \bx_k,\bu^{(k)}_i\rangle \langle \bx_l,\bu^{(l)}_i\rangle \prod_{q'\notin\{k,l\}}\langle \ba_{q},\bu^{(q)}_i\rangle 
		\\&=\dfrac{1}{\lambda}\sum_{k\neq l}\sum_{i\in S}\abs*{\langle \bx_k,\bu^{(k)}_i\rangle}\abs*{ \langle \bx_l,\bu^{(l)}_i\rangle}\times\dfrac{\lambda_i\prod_q\langle \ba_q,\bu^{(q)}_i \rangle}{\abs*{\langle\ba_k,\bu^{(k)}_i  \rangle }\abs*{\langle\ba_l,\bu^{(l)}_i  \rangle }}
		\\&=\dfrac{1}{\lambda}\sum_{k\neq l}\sum_{i\in S}\abs*{\langle \bx_k,\bu^{(k)}_i\rangle}\abs*{ \langle \bx_l,\bu^{(l)}_i\rangle}\times \lambda
		\\&\ge \sum_{k\neq l}\bx_k^T\bx_l =\sum_{k,l}\bx_k^T\bx_l-\sum_{q=1}^p\bx_q^T\bx_q=p^2-p
		\\&>\sum_{q=1}^p\bx_q^T\bx_q=p,
	\end{aligned}$$
	where we use definition of $\calL$ in the second equality and \eqref{eq:values} in the third equality. Claim 2 then follows since $p>2.$
\end{proof}
Lemma \ref{le:hyperbolic} implies that the space spanned by the eigenvectors of $D[\phi\circ G\circ \phi^{-1}]$ with absolute eigenvalues less than 1, has dimension at most $(d-1)^{p}-(|S|-1)^p.$ Using theorem 4.17 from \cite{belkin2018eigenvectors} we obtain that, if $|S|>1,$ the local convergence manifold
$$\calL_{local}=\{\Tilde{\bx}(0):\lim_{t\to \infty}\bx_i(t)=\ba_i,\,\bx_i(t)\in U_i\qquad \forall t\in \mathbb{N}\}$$
has dimension strictly lower than that of $\calS^{(d-1)\otimes p}.$ On the other hand, it is immediate that the convergence manifold is full dimensional whenever $|S|=1.$

\noindent\textbf{Local to global:} Arguing along the lines of theorems 4.21-4.24 in \cite{belkin2018eigenvectors}, using the continuity and injectivity of $G,$ we can get a measure zero set $\calM$ such that for any $\tilde{\bx}=(\bx_1,\dots,\bx_p)\in \calS^{(d-1)\otimes p}/ \calM,$ we have $\eta>0$ and a critical point $(\ba_1,\dots,\ba_p)$ with $|S|>1$ such that  $(G_n(\tilde{\bx}))_q\in \calQ_{\ba_q},$
$$\max_{1\le q\le p}\norm*{\calP^{(q)}_{\bar{S}}(G_n(\tilde{\bx})_q)}\to 0,\,\,\rm{and}\,\,\norm*{G_n(\bx_1\otimes\dots\otimes \bx_p)-\ba_1\otimes\dots\otimes \ba_p}_F^2\ge \eta,$$
for all sufficiently large $n.$ 
To reduce notation, we use $\mathscr{U}_i,$ $\scrA$ and $\mathscr{X}$ to mean $(\bu_i^{(1)}\otimes \dots\otimes\bu_i^{(p)}),$ $(\ba_1\otimes \dots\otimes \ba_p)$ and $G_n(\bx_1\otimes\dots\otimes \bx_p)$ respectively. It can be checked by one application of $G$ that there exist $\eps>0$ and $i\neq j\in S$ such that
$$\dfrac{\langle \scrX,\,\scrU_i\rangle/\langle \scrA,\,\scrU_i\rangle}{\langle \scrX,\,\scrU_j\rangle/\langle \scrA,\,\scrU_j\rangle}>1+\eps.$$


We already have that with probability one, any starting point for the gradient iteration satisfies the claim above. We will now see that with each step  of the iteration, large inner products (between the estimate tensor and the hidden basis elements) become larger. Because of the norm constraint, this means that the estimate becomes more and more correlated with a particular basis element, eventually converging to it.

\begin{lemma}\label{le:poweriter} Suppose we have $\eps>0$ and $(\bx_1,\dots,\bx_p)$ satisfying  \\$\underset{i,j}{\max}\dfrac{\langle \scrX,\,\mathscr{U}_i\rangle/\langle \scrA,\,\mathscr{U}_i\rangle}{\langle \scrX,\,\mathscr{U}_j\rangle/\langle \scrA,\,\mathscr{U}_j\rangle}>1+\eps.$ Then,
	$$\underset{i,j}{\max}\dfrac{\langle G(\scrX),\mathscr{U}_i\rangle }{\langle G(\scrX),\mathscr{U}_j\rangle}\ge (1+\eps)^{p-2}\underset{i,j}{\max}\dfrac{\langle \scrX,\mathscr{U}_i\rangle}{\langle \scrX,\mathscr{U}_j\rangle }.$$
\end{lemma}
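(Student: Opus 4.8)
The plan is to evaluate one step of the gradient iteration on the rank-one tensor $\scrX$ explicitly, using that $\scrT$ is odeco, and then read off directly what happens to the ratios $\langle G(\scrX),\mathscr{U}_i\rangle/\langle G(\scrX),\mathscr{U}_j\rangle$. Write $\scrX=\bx_1\otimes\cdots\otimes\bx_p$ for the rank-one tensor attached to the tuple. Since $\{\bu_k^{(q)}\}_k$ is orthonormal in mode $q$, the unnormalized $q$-th slot of the gradient map is $\scrT\times_{q'\neq q}\bx_{q'}=\sum_k\lambda_k\big(\prod_{q'\neq q}\langle\bu_k^{(q')},\bx_{q'}\rangle\big)\bu_k^{(q)}$, whose $\bu_i^{(q)}$-coordinate is $\lambda_i\prod_{q'\neq q}\langle\bu_i^{(q')},\bx_{q'}\rangle$; dividing by the $i$-independent factor $N_q:=\|\scrT\times_{q'\neq q}\bx_{q'}\|$ gives the corresponding coordinate of the $q$-th slot of $G(\scrX)$. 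Multiplying over $q=1,\dots,p$ and noting that each scalar $\langle\bu_i^{(q')},\bx_{q'}\rangle$ occurs in exactly $p-1$ of the $p$ products, one obtains the closed form
$$\langle G(\scrX),\mathscr{U}_i\rangle=\frac{\lambda_i^{p}}{N_1\cdots N_p}\Big(\prod_{q=1}^{p}\langle\bu_i^{(q)},\bx_{q}\rangle\Big)^{p-1}=\frac{\lambda_i^{p}}{N_1\cdots N_p}\,\langle\scrX,\mathscr{U}_i\rangle^{p-1}.$$

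The next step is to eliminate the unknown $\lambda_i$ using the critical point $\scrA$. Because $(\ba_1,\dots,\ba_p)$ is a fixed point of $G$ with multiplier $\lambda>0$, the same identity applied to the rank-one tensor $\scrX=\scrA$, where $N_q=\|\scrT\times_{q'\neq q}\ba^{(q')}\|=\lambda$, yields $\lambda_i^{p}=\lambda^{p}\,\langle\scrA,\mathscr{U}_i\rangle^{-(p-2)}$ for every $i\in S$; this is exactly the content of \eqref{eq:values} and \eqref{eq:signs}, which ensure $\langle\scrA,\mathscr{U}_i\rangle=\gamma_i^{p}>0$ with $\gamma_i^{p-2}=\lambda/\lambda_i$. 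Substituting into the closed form and cancelling the common constant gives, for $i,j\in S$,
$$\frac{\langle G(\scrX),\mathscr{U}_i\rangle}{\langle G(\scrX),\mathscr{U}_j\rangle}=\bigg(\frac{\langle\scrX,\mathscr{U}_i\rangle/\langle\scrA,\mathscr{U}_i\rangle}{\langle\scrX,\mathscr{U}_j\rangle/\langle\scrA,\mathscr{U}_j\rangle}\bigg)^{p-1}\frac{\langle\scrA,\mathscr{U}_i\rangle}{\langle\scrA,\mathscr{U}_j\rangle}=\bigg(\frac{\langle\scrX,\mathscr{U}_i\rangle/\langle\scrA,\mathscr{U}_i\rangle}{\langle\scrX,\mathscr{U}_j\rangle/\langle\scrA,\mathscr{U}_j\rangle}\bigg)^{p-2}\frac{\langle\scrX,\mathscr{U}_i\rangle}{\langle\scrX,\mathscr{U}_j\rangle}.$$
Since $\langle G(\scrA),\mathscr{U}_i\rangle=\langle\scrA,\mathscr{U}_i\rangle$, the first equality says precisely that one step of $G$ raises the $\scrA$-normalized ratio $\big(\langle\scrX,\mathscr{U}_i\rangle/\langle\scrA,\mathscr{U}_i\rangle\big)\big/\big(\langle\scrX,\mathscr{U}_j\rangle/\langle\scrA,\mathscr{U}_j\rangle\big)$ to the power $p-1$.

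To conclude, I would take $i,j$ to be a pair attaining the maximum in the hypothesis. Then the bracketed factor in the second equality above exceeds $(1+\eps)^{p-2}$, so $\langle G(\scrX),\mathscr{U}_i\rangle/\langle G(\scrX),\mathscr{U}_j\rangle>(1+\eps)^{p-2}\langle\scrX,\mathscr{U}_i\rangle/\langle\scrX,\mathscr{U}_j\rangle$; equivalently, writing $X>1+\eps$ for the maximal $\scrA$-normalized ratio before the step, the maximal $\scrA$-normalized ratio afterwards is $X^{p-1}$, and $X^{p-1}=X^{p-2}\cdot X\ge(1+\eps)^{p-2}X$ because $X>1+\eps$ and $p-2\ge1$, which is the claimed amplification by a factor $(1+\eps)^{p-2}$.

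The closed-form evaluation and its consequence are essentially mechanical once the mode-wise orthogonality is exploited. The points that need care are: the combinatorial bookkeeping across the $p$ modes that each scalar $\langle\bu_i^{(q)},\bx_q\rangle$ is counted $p-1$ times; the matching of signs and of the permutation $\pi$ (via the $\gamma_k^{(q)}$ of Lemma~\ref{le:specnorm}) so that $\langle\scrA,\mathscr{U}_i\rangle>0$ on $S$; and the handling of indices with $\lambda_i=0$ or outside $S$, where $\langle\scrA,\mathscr{U}_i\rangle=0$ and the ratios are interpreted through the $1/0=+\infty$ convention. The genuine obstacle — the one place where more than algebra is involved — is ensuring that the index pair realizing the maximal $\scrA$-normalized ratio also controls the plain ratio $\langle\scrX,\mathscr{U}_i\rangle/\langle\scrX,\mathscr{U}_j\rangle$ that appears in the statement, for which one uses the normalizations $\sum_i|\langle\scrX,\mathscr{U}_i\rangle|^{2/p}\le1$ and $\sum_i\langle\scrA,\mathscr{U}_i\rangle^{2/p}=1$ forced by the unit vectors $\bx_q$ and $\ba_q$.
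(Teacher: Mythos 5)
Your argument is correct and is essentially the paper's own proof: both compute the one-step ratio $\dfrac{\langle G(\scrX),\scrU_i\rangle}{\langle G(\scrX),\scrU_j\rangle}=\dfrac{\lambda_i^p\langle\scrX,\scrU_i\rangle^{p-1}}{\lambda_j^p\langle\scrX,\scrU_j\rangle^{p-1}}$, eliminate the $\lambda_i$'s through \eqref{eq:values} at the critical point (so that $\lambda_i^p\langle\scrA,\scrU_i\rangle^{p-2}=\lambda^p$ for all $i\in S$), and evaluate at the pair maximizing the $\scrA$-normalized ratio to obtain the $(1+\eps)^{p-2}$ amplification. The ``genuine obstacle'' you flag at the end (relating the pair that maximizes the normalized ratio to the maximum of the plain ratios in the stated conclusion) is likewise left implicit in the paper, whose proof also only establishes the inequality at that particular pair — which is all that the subsequent convergence argument uses.
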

\begin{proof}[Proof of Lemma \ref{le:poweriter}]Let $i,j$ be the indices that maximize $\dfrac{\langle \scrX,\mathscr{U}_i\rangle /\langle \scrA,\,\mathscr{U}_i\rangle}{\langle \scrX,\mathscr{U}_j\rangle /\langle \scrA,\,\mathscr{U}_j\rangle}.$ By the definition of $G,$ we have
	$$\begin{aligned}\dfrac{\langle G(\scrX),\mathscr{U}_i\rangle }{\langle G(\scrX),\mathscr{U}_j\rangle}&=\dfrac{\lambda_i^p\left(\langle \scrX,\mathscr{U}_i\rangle \right)^{p-2}}{\lambda_j^p\left(\langle \scrX,\mathscr{U}_j\rangle \right)^{p-2}}\times \dfrac{\langle \scrX,\mathscr{U}_i\rangle }{\langle \scrX,\mathscr{U}_j\rangle}
		\\&\ge (1+\eps)^{p-2}\cdot\dfrac{\lambda_i^p\left(\langle \scrA,\mathscr{U}_i\rangle \right)^{p-2}}{\lambda_j^p\left(\langle \scrA,\mathscr{U}_j\rangle \right)^{p-2}}\cdot \dfrac{\langle \scrX,\mathscr{U}_i\rangle}{\langle \scrX,\mathscr{U}_j\rangle}
		\\&=(1+\eps)^{p-2}\cdot \dfrac{\lambda^{p}}{\lambda^p}\cdot \dfrac{\langle \scrX,\mathscr{U}_i\rangle }{\langle \scrX,\mathscr{U}_j\rangle},
	\end{aligned}$$
	where we use equation \eqref{eq:values} in the last step.
\end{proof}

By Lemma \ref{le:poweriter}, we obtain at least one $j\in S$ such that $\langle G_n(\scrX),\scrU_j\rangle\to 0.$ By the definition of $G$ it can be checked that this in turn implies that \\$\max_{1\le q\le p}|\langle (G_n(\scrX))_q, \bU^{(q)}_j\rangle|\to 0$ for some $j\in S.$

Repeated use of Lemma \ref{le:poweriter} gives us that for any starting point $\scrX(0)$ in a probability one set, we have a critical point $(\ba_1,\dots,\ba_p)$ with $S=\{i\in[d]:\langle \scrA,\mathscr{U}_i\rangle \neq 0\}$ satisfying $|S|>1,$ such that $\norm*{\calP^{(q)}_{\bar{S}}(G_n(\scrX))_q}\to 0$ and there is at least one $j\in S$ for which $\langle G_n(\scrX),\,\mathscr{U}_j\rangle \to 0$ as $n\to \infty.$ We can now repeat the entire argument to get a decreasing sequence $S=S_0\supset S_1\supset \dots S_k$ such that $|S_k|=|S|-k$ and $\langle G_n(\scrX),\mathscr{U}_i\rangle\to 0 $ for all $i\notin S_k.$ Therefore $G_n(\scrX)\to (\bu^{(1)}_i,\dots,\bu^{(p)}_i)$ for some $i$ with $\lambda_i>0.$ This finishes the proof of theorem \ref{th:comp}.
$\qed$

\bigskip

\begin{proof}[Proof of Lemma \ref{le:specnorm}]
	Observe that, or any $\eps>0$, $\|\tilde{\scrT}-\scrT\|\le c_\eps\lambda_{\min}$ for the constant $c_\eps$ in Theorem \ref{th:ortho-perturb}, provided $d_{\min}>C_\eps$ (a large enough constant depending on $\eps$), so that the conditions of Theorem \ref{th:ortho-perturb} are satisfied.
	
	Then by \eqref{eq:secordpert}, there is a vector $\boldsymbol{\gamma}^{(q)}\in\{+1,-1\}^{d_{\min}}$ such that $$\bV^{(q)}_k=[\bu_1^{(q)}\,\dots\,\bu_{k}^{(q)}]\,\,\text{ and}\,\,
	\tilde{\bV}^{(q)}_k=[\gamma_1^{(q)}\tilde{\bu}_{\pi(1)}^{(q)}\,\dots\,\gamma_{k}^{(q)}\tilde{\bu}_{\pi(k)}^{(q)}]$$
	satisfy
	\begin{equation*}\label{eq:addMR}
		\bV^{(q)}_k-\tilde{\bV}^{(q)}_k=\bM^{(q)}_k\bD_k+\bR^{(q)}_k
	\end{equation*}
	where $\bM^{(q)}_k$ is the matrix defined in Assumption {\bf A1} and $\bD_k={\rm diag}(\lambda_1^{-1},\dots,\lambda_k^{-1})$. The remainder $\bR^{(q)}_k$ has columns satisfying
	\begin{align*}\label{eq:Rqbound}
		\|\bR^{(q)}_k\be_i\|\le& 2\left(2+\dfrac{\|\tilde{\scrT}-\scrT\|}{\lambda_i}\right)
		\left(\dfrac{(1+\eps)\|\tilde{\scrT}-\scrT\|}{\lambda_i}\right)^{p-1}\\
		\le & C\left(\dfrac{\|\tilde{\scrT}-\scrT\|}{\lambda_i}\right)^{p-1}\numberthis
	\end{align*}
	for $1\le i\le k$, where we use \eqref{eq:secordpert} in the first line and Assumption {\bf A2} in the second line. We thus have
	\begin{align*}
		\|\bV^{(q)}_k-\tilde{\bV}^{(q)}_k\|\le &\|\bM^{(q)}_k\bD_k+\bR^{(q)}_k\|\\
		\le &\|\bM^{(q)}_k\|\|\bD_k\|+\|\bR^{(q)}_k\|\\
		\le & 
		\dfrac{C\|\tilde{\scrT}-\scrT\|}{\lambda_{\min,k}}+\sqrt{d_{\min}}\max_{1\le i\le k}\|\bR^{(q)}\be_i\|\\
		\le & \dfrac{C\|\tilde{\scrT}-\scrT\|}{\lambda_{\min,k}}+\sqrt{d_{\min}}\cdot C\left(\dfrac{\|\tilde{\scrT}-\scrT\|}{\lambda_{\min,k}}\right)^{p-1}\\
		\le &\dfrac{C\|\tilde{\scrT}-\scrT\|}{\lambda_{\min,k}}	
	\end{align*}
	where we used \eqref{eq:Rqbound} in the third inequality. Finally, if $d_{\min}<C_{\eps}$, the conclusion follows directly using the column-wise error bounds from Theorems \ref{th:ortho-perturb} and \ref{th:odeco-weyl}.
\end{proof}

\end{document}